\newtheorem{thm}{Theorem}[section]
\newtheorem{lem}[thm]{Lemma}
\newtheorem{cor}[thm]{Corollary}
\newtheorem{prop}[thm]{Proposition}
\newtheorem{rem}{Remark}[section]
\newtheorem{optimization}{Optimization}
\numberwithin{equation}{section}
\newcommand{\K}{{\mathcal K}}
\newcommand{\Q}{{\mathcal Q}}
\renewcommand{\P}{{\mathcal P}}
\def\R{{\mathbb{R}}}
\def\N{{\mathbb{N}}}
\def\Z{{\mathbb{Z}}}
\def\E{{\mathcal{E}}}
\def\B{{\mathcal{B}}}
\def\O{{\mathcal{O}}}
\newcommand{\I}{{\mathcal I}}
\newcommand{\z}{R}
\newcommand{\one}{{\mathbbm{1}}}
\title{Atypical behaviors of a tagged particle in asymmetric simple exclusion}
\date{\today } 
\begin{document}

\begin{abstract} 
Consider the asymmetric nearest-neighbor exclusion process (ASEP) on $\Z$ with single particle drift $\gamma>0$, starting from a Bernoulli product invariant measure $\nu_\rho$ with density $\rho$.  It is known that the position $X_{N}$ of a tagged particle, say initially at the origin, at time $N$ satisfies an a.s. law of large numbers $\frac{1}{N}X_N \rightarrow \gamma(1-\rho)$ as $N\uparrow\infty$.

In this context, we study the `typical' behavior of the tagged particle and `bulk' density evolution subject to `atypical' events $\{X_N\geq AN\}$ or $\{X_N\leq AN\}$ for $A\neq \gamma(1-\rho)$.  We detail different structures, depending on whether $A<0$, $0\leq A< \gamma(1-\rho)$, $\gamma(1-\rho)<A< \gamma$, or $A\geq \gamma$, under which these atypical events are achieved, and compute associated large deviation costs.  Among our results is an `upper tail' large deviation principle in scale $N$ for $\frac{1}{N}X_N$.  

\end{abstract}

\subjclass[2020]{60K35}

 \keywords{exclusion, asymmetric, simple, tagged, law of large numbers, large deviations, nonentropic, upper tail}
 
 \author{Sunder Sethuraman}
\address{Department of Mathematics\\
University of Arizona\\
621 N. Santa Rita Ave.\\
Tucson, AZ 85750, USA}
\email{{\tt sethuram@math.arizona.edu}}

\author{S.R.S. Varadhan}
\address{Courant Institute\\
New York University\\
251 Mercer St.\\
New York, NY, 10012, USA}
\email{{\tt varadhan@cims,nyu.edu}}
 
\maketitle

\section{Introduction}
Informally, the nearest-neighbor exclusion process on $\Z$ follows the unlabeled evolution of a typically infinite collection of particles, where a particle at $x\in \Z$ jumps with rate $p(\pm 1)$ to location $x\pm 1$ if the site $x\pm 1$ is unoccupied, and otherwise remains put with the clock reset.  
More carefully,
let 
$$\eta=\{\eta(x):x\in \Z\}\in \Omega= \{0,1\}^{\Z}$$ 
be a particle configuration on $\Z$ where $\eta(x)$ is the indicator of site $x$ being occupied $(=1)$ or empty $(=0)$.  Denote by $p(1) = p$ and $p(-1)=1-p=q$, where $p\in [0,1]$, the nearest-neighbor (translation-invariant) jump rate.
For local functions $f:\Omega\rightarrow \R$, those depending only on a finite number of variables $\{\eta(x):x\in \Z\}$, define 
\begin{align}
\label{exclusion gen}
\mathcal{L}f(\eta) &= \sum_{x\in \Z} \Big\{p\eta(x)(1-\eta(x+1))\big(f(\eta^{x,x+1})-f(\eta)\big) \\
&\ \ \ \ \ + (1-p)\eta(x)(1-\eta(x-1))\big(f(\eta^{x,x-1})-f(\eta)\big)\Big\}, \nonumber
\end{align}
where $\eta^{x,y}$ is the configuration where values $\eta(x)$ and $\eta(y)$ have been exchanged.

 Then, the exclusion process $\{\eta_t: t\geq 0\}$ is the Markov process on $\Omega$ with generator $\mathcal{L}$.  Such a process has been a natural model in which to study different types of `flows' in several scales. For instance, the particles may represent cars moving in a single lane without passing, or the spacings between particles may be interpreted as a series of queues.

The system is `mass conservative', 
and therefore possesses a family of invariant measures.  In particular, it is known that the product measure $\nu_\rho = \prod_{x\in \Z} \text{Bern}(\rho)$ of Bernoulli factors, where the random variable with distribution $\text{Bern}(\rho)$ takes values $1$ and $0$ with probabilities $\rho$ and $1-\rho$ respectively, is invariant for each $\rho\in [0,1]$.   See \cite{DeMP}, \cite{kl}, \cite{Liggett}, \cite{Spohn} for more discussion and history.

To make a choice, we will consider the model with `right-ward' drift, as results in the `left-ward' drift setting will be analogous.
As is now standard, to fix terminology, when $p=1/2$, the model is called the symmetric simple exclusion process (SSEP).  Whereas, when $1/2<p\leq 1$, we will say it is the asymmetric simple exclusion process (ASEP).  In the case, $p=1$, the model is the totally asymmetric exclusion process (TASEP).  We will define $\gamma$ as the drift $\gamma = 2p-1=p-q\geq 0$.

 In the exclusion system, the motion of a tagged, or distinguished particle interacting with others is of basic interest, and of relevance in applications; see \cite{CS}[Section 1] (and references therein),  \cite{kl}, \cite{Liggett}, \cite{Liggett2}, \cite{Spohn} for more discussion. 
Suppose initially the origin is occupied.  Tagging this particle, let $X_t$ be its position at time $t$.  By itself, it is not in general a Markov process with respect to its own history, due to interactions with the other particles.  Indeed, in the nearest-neighbor exclusion dynamics, the tagged particle is always between the particle to its left and that to its right, never jumping over them. 

The belief though is that it behaves to first order as some type of random walk, homogenized with respect to the `bulk' particle mass hydrodynamics.
Such typical law of large numbers (LLN) behaviors are understood in in ASEP and TASEP \cite{Reza_LLN}, \cite{Saada}, \cite{Seppalainen}, as well as in SSEP \cite{JL}.  Moreover, with respect to large deviations, we note in TASEP `lower tail' limits have been found \cite{Seppalainen}.
However, less is known on how the tagged particle optimally behaves in atypical situations, say in the event it exceeds or is less than the typical position at a given time.

Fix $\rho\in (0,1)$.  The focus of our work will be on the ASEP model, when starting from an invariant state $\nu_\rho(\cdot |\eta(0)=1)$, conditioned so that the tagged particle is at the origin.  From such initial measures, as we discuss in the next subsection, the tagged particle evolution possesses interesting properties. 
In this system, our goal is 
to understand how the bulk particle mass and tagged particle optimally organize and move to achieve deviation events.  In the course of these computations, we establish an Euler scale `upper tail' large deviations principle for the tagged particle, and solve related calculus of variations optimizations.
In a case of the results, for `lower tail' events in TASEP, `nonentropic' solutions of the hydrodynamic equation will play a role.

We comment that the diffusive scale large deviations behavior of the tagged particle in SSEP, different than in ASEP, has been studied in \cite{Derrida}, \cite{Sasamoto}, \cite{SV}.
 We note also large deviations of the related but different statistics of the `current' in ASEP, TASEP and SSEP, starting from types of initial conditions, has been studied for instance in \cite{Chle}, \cite{DPS}, \cite{Das}, \cite{Joh}, \cite{Olla_Tsai}, \cite{Seppalainen}, \cite{SV}.

In the following, we will denote by $\P_\rho$ and $\E_\rho$ the process measure and expectation starting from $\nu_\rho(\cdot|\eta(0)=1)$.  More generally, we denote $\P_\mu$ and $\E_\mu$ as the process measure and associated expectation when starting from distribution $\mu$.  We will also have occasion to denote $E_\mu$ as the expectation under measure $\mu$.

\medskip
\noindent{\bf Plan of the article.}
We give an informal discussion of our results in Section \ref{results section}, facilitated by known {\it exact} and {\it approximate} formulas for the law of the tagged particle $X_t$ in Section \ref{exact_section}, and connections between the `bulk' particle mass dynamics and that of the tagged particle in Section \ref{connections section}.  Results are stated in Section \ref{section results}.
After preliminaries in Section \ref{section preliminaries}, proofs of the lower and upper bound results are given in Sections \ref{LB section} and \ref{UB section}. 
The upper bounds rely on a calculus of variations problem solved in Section \ref{calc var section}.  Proofs of preliminary ingredients are found in Sections \ref{LLN appendix}, \ref{LDP section}, \ref{section superexponential}.  The appendix develops a Hopf-Lax formula, as well as a proof of an exact formula in TASEP, for the reader's convenience.

\subsection{Exact and approximate formulas} 
\label{exact_section}
In TASEP, starting the system under distribution $\nu_\rho(\cdot|\eta(0)=1)$, remarkably, the tagged particle position 
\begin{equation}
\label{exact_formula}
X_t \text{ is a Poisson process with rate } 1-\rho.
\end{equation} 
 This follows from viewing TASEP in terms of a zero-range queuing system; see Appendix \ref{Poisson process section} for a derivation.  

Such an exact corresponding formula in ASEP is not available.  However, interestingly, Poissonian approximation is known (cf. \cite{Ferrari_Fontes}:  When the system starts in $\nu_\rho(\cdot|\eta(0)=1)$, there is a $0<\theta = \theta(\rho, p)<\infty$ such that
\begin{equation}
\label{approximate_formula}
X_{t} = Poi_t +\chi_t
\end{equation}
where $Poi_t$ is a Poisson process with rate $\gamma (1-\rho)$ and $\chi_t$ is a stationary process with finite exponential moment $E[\exp\{\theta |\chi_t|\}]<\infty$.

Of course, from these results, introducing a scaling parameter $N\geq 1$ and fixing the `macroscopic' time $t\geq 0$, laws of large numbers $\lim_{N\rightarrow\infty}X_{Nt}/N =\gamma(1-\rho)t$ a.s., and central limit theorems $(X_{Nt} - \gamma(1-\rho)Nt)/\sqrt{N} \Rightarrow {\mathcal N}(0, \gamma(1-\rho)t)$ are straightforwardly deduced starting from $\nu_\rho(\cdot|\eta(0)=1)$; see also \cite{Saada}, \cite{Kipnis} for other derivations of these limits.

Also, from the exact formula \eqref{exact_formula}, in TASEP, a large deviation principle holds:  $\P_\rho(X_{Nt}/N \in F) \sim \exp\{-NI_{1,t}(F)\}$ where the rate function $I_{1,t}(F) = \inf_{a\in F}I_{1,t}(a)$ and
\begin{equation}
\label{Tasep_rate_function}
I_{1,t}(a) = \left\{\begin{array}{rl}
a\log\left(\frac{a}{(1-\rho)t}\right) -a+(1-\rho)t & \ {\rm for \ } a\geq 0\\
\infty& {\rm \ for \ }a<0. \end{array}\right.
\end{equation}
We will denote in the following $I_1 = I_{1,1}$ when $t=1$.  

 In the general ASEP model, less is known.  We remark that, as $Poi_t$ and $\chi_t$ are not independent, a large deviation rate function cannot be found immediately, although bounds may be derived from the approximate formula \eqref{approximate_formula}.
 
  However, it is known in ASEP that the position $X_t$ has negatively associated stationary increments when starting from $\nu_\rho(\cdot|\eta(0)=1)$; see \cite{Kipnis} for a derivation, where the law of the tagged particle is understood as the current in a certain zero-range particle system.  That is, for increasing functions $f, g$, and $s,t>0$, 
 $$\E_\rho[f(X_{t+s}-X_s)g(X_s)] \leq \E_\rho[f(X_t)]\E_\rho[g(X_s)].$$

From the negative association, an upper bound large deviation principle can be stated.  Consider that $\big\{\log \E_\rho[e^{\lambda X_{s}}]: s>0\big\}$ is a subadditive sequence for each $\lambda \in \R$.  Then, as $N\uparrow\infty$, $\frac{1}{N}\log \E_\rho\big[e^{\lambda Nt}\big]$ converges to the pressure
 $\Lambda_t(\lambda) = \inf_{k\geq 1} \frac{1}{k}\log \E_\rho[e^{\lambda X_{kt}}]$ for each $t\geq 0$. 
 Define, a rate function
 \begin{equation*}
 \hat{I}_{\gamma,t}(a) = \sup_\lambda \{ \lambda a - \Lambda_t(\lambda)\},
 \end{equation*}
for $a\in \R$.  Since $\Lambda_t(\lambda)<\infty$ for $\lambda\in \R$, the scaled positions $\{X_{Nt}/N\}$ are exponentially tight.  Hence, by the Gartner-Ellis theorem, for closed $U\in \R$,   
$$\limsup \frac{1}{N}\log \P_\rho\big(\frac{1}{N}X_{Nt} \in U\big) \leq -\inf_{a\in U} \hat{I}_{\gamma,t}(a).$$ 

In TASEP ($\gamma=1$), of course explicitly $\hat{I}_{1, t} = I_{1, t}$.  Otherwise, more generally in ASEP, $\hat{I}_{\gamma,t}$ is not a priori explicit.  But, the Poissonian approximation gives an upper bound for $\Lambda_t(\lambda)$ in terms of the pressure for the Poisson process with rate $\gamma(1-\rho)$.  From H\"older's inequality, one sees for $r\lambda/(r-1)<\theta$ that
   $\Lambda_t(\lambda) \leq \frac{\gamma(1-\rho)t}{r} \big(e^{r\lambda} -1\big)$,
   which in turn gives an upper bound of $\hat I_{\gamma,t}(a)$ for $a$ close to $\gamma(1-\rho)t$.
We comment that an accompanying lower bound large deviation principle would also be possible if $\Lambda_t(\cdot)$ could be shown to be essentially smooth, which does not seem to follow straightforwardly.

\subsection{Connections between the `bulk' particle mass and the tagged particle}
\label{connections section}
Although the tagged particle has its own `clock' and can move by itself in empty space, it does interact with the other particles.
Indeed, we may formulate a joint Markov process $(\eta_t, X_t)$ on the configuration space $\{0,1\}^\Z \times \Z$ with generator
\begin{align}
\mathcal{L}f(\eta, X) &= \sum_{x\neq X\in \Z} \Big\{p\eta(x)(1-\eta(x+1))\big(f(\eta^{x,x+1}, X)-f(\eta, X)\big) \nonumber\\
&\ \ \ \ \ \ \ \ \ \ \ \ \ \ \ \ + (1-p)\eta(x)(1-\eta(x-1))\big(f(\eta^{x,x-1}, X)-f(\eta, X)\big)\Big\}\nonumber\\
& \ \ \ \ \ + p\eta(X)(1-\eta(X+1))\big(f(\eta^{X,X+1}, X+1)-f(\eta, X)\big)\nonumber \\
&\ \ \ \ \ + (1-p)\eta(X)(1-\eta(X-1))\big(f(\eta^{X,X-1}, X-1)-f(\eta,X)\big).
\label{joint}
\end{align}
  Moreover, for the process $\xi_t(\cdot)=\eta_t(\cdot + X_t)$ in the reference frame of the tagged particle, always at $0$, the measure $\nu_\rho(\cdot |\eta(0)=1)$ is invariant (cf. \cite{Liggett}).

In particular, an ingredient to capture the optimal behaviors of the tagged particle to achieve atypical events will be connections to the `bulk' particle mass evolution:
Define the current $J_t(x)$ across the site $x\in \Z$ as the difference of the number of particles which have crossed $x$ to $x+1$ and those which have crossed from $x+1$ to $x$ up to time $t>0$. 
 Then, the relation
$J_t(y) - J_t(x) = \sum_{z=x+1}^y \left(\eta_0(x) - \eta_t(z)\right)$
 for $x<y$ holds.

 When the initial configuration $\eta_0$ is empty to the left eventually, that is $\eta_0(z)=0$ for all $z<0$ with $|z|$ large, note that
$J_t(0) = \sum_{z\leq 0} \left(\eta_0(z) - \eta_t(z)\right)$.
 In this setting, for the tagged particle, starting from the origin, to move beyond $a\geq 0$, all the particles between it and $a$ initially must have crossed beyond $a$ at time $t$:
$\big\{X_t>a\big\} = \big\{J_t(a)\geq \sum_{z=0}^a \eta_0(z)\big\} =  \big\{\sum_{z\leq a}\eta_t(z) < \sum_{z\leq 0}\eta_0(z)\big\}$.
Similar relations can be written for $a<0$.  See Section \ref{current section} for more discussion.

 Let $T=1$ be a fixed time horizon.  When $\eta_0(x) =0$ for all large $|x|$, denote the scaled empirical measure for $t\in [0,1]$ by
$$\pi^{N}_t = \frac{1}{N}\sum_{x\in \Z}\eta_{Nt}(x)\delta_{x/N},$$
which belongs to the Skorohod space $D([0,1]; M(\R))$, where $M(\R)$ is the space of finite Radon measures, endowed with a distance 
\begin{align}
\label{distance}
d(\mu,\nu) = \sum_{j\geq 1} \frac{1}{2^j} \big |E_\mu [f_j] - E_\nu[f_j] \big |;
\end{align}
 here $\{f_j\}$ is a countable, dense set of smooth, compactly supported functions on $\R$.

Let $\rho_0(u): \R \rightarrow [0,1]$ be a given initial `mass profile', which is piecewise continuous and vanishes at $u$ for all large $|u|$.  Define the local equilibrium measure
$$\nu^N_{\rho_0(\cdot)} = \prod_{x\in \Z} {\rm Bern}\left(\rho_0(x/N)\right).$$
The following hydrodynamic limit is known:  Starting from initial distribution $\nu^N_{\rho_0(\cdot)}$, for compactly supported test functions $J:[0,1]\times \R\rightarrow \R$, as $N\rightarrow\infty$,
$$\int_0^1 \langle J, \pi^{N}_s\rangle ds \stackrel{{\rm prob.}}{\rightarrow} \int_0^1 \int_\R J(s,u)\rho(s,u)duds$$
where
$\rho(t,u)$ is the unique entropic solution of the Burgers type conservation law
\begin{equation}
\label{Burgers}
\partial_t \rho +\gamma\partial_u \{\rho(1-\rho)\} = 0
\end{equation}
for $0<t\leq 1$ such that $\rho(0, \cdot) = \rho_0(\cdot)$.  Moreover, the measure-valued trajectory $\pi^N_\cdot$ in $D([0,1], M(\R))$ converges to $\rho(\cdot, u)du$ in probability; see \cite{Rezakhanlou}, \cite{Rost}, \cite{Seppalainen}.

Suppose $\rho_0$ is positive on both sides of the origin.  Then, starting the process from $\nu^N_{\rho_0(\cdot)}(\cdot|\eta(0)=1)$, the tagged position $X_{Nt}/N$ converges in probability to the unique solution $v_t$ of the ODE
\begin{align}
\label{ODE-reza}
\dot v = \gamma\{ 1-\rho(t,v_t)\},
\end{align}
interpreted in the Fillipov sense, for $t\in [0,1]$; see \cite{Reza_LLN} and also in TASEP \cite{Seppalainen}.   We observe, when $\rho_0\equiv \rho$ is a constant, then \eqref{ODE-reza} gives $v_t=t\left[\gamma(1-\rho)\right]$, already deduced from the Poissonian approximation.  

We also consider in Section \ref{LLN section}, the LLN behavior in ASEP in a couple of scenarios, including starting from a step profile (cf. \cite{TW}), where $\rho_0$ vanishes on one side of the tagged particle, needed in later large deviation arguments.  Here, the motion is not carried by the `bulk' hydrodynamics, and more specific particle arguments are used.

We discuss further that, in TASEP, upper and lower large deviation bounds for the empirical measure have been established.   In particular, consider `nice' evolutions $\rho$:  Let $\{[t_j^-, t_j^+]\}$ be a finite collection of disjoint intervals in $[0,1]$, and $s_j:[t_j^-, t_j^+]\rightarrow \R$ be associated piecewise differentiable functions. We suppose that $\rho$ is continuously differentiable away from the union $\{(t, s_j(t)): t\in [t_j^-, t_j^+]\}$.  Also, assume that $\rho(t, \cdot)$ has right and left limits, 
$t\mapsto \rho(t, s_j(t)\pm)$ is piecewise continuous, and $\rho(t, u) = \rho$ at $(t,u)$ for all large $|u|$.

Then, for such profiles $\rho(\cdot, \cdot)$, when starting from $\nu_\rho$, 
$$\lim_{\delta\downarrow 0}\lim_{N\uparrow\infty}\frac{1}{N}\log \P_\rho\left(\pi^N_\cdot \in B_\delta(\rho)\right) = K(\rho_0)+I_{JV}(\rho),$$
where $B_\delta(\rho(\cdot,u)du)$ is a $\delta$-ball around this profile in the Skorohod space $D([0,1]; M(\R))$.
Here,
\begin{equation*}
K(\rho_0) = \int_\R \left\{\rho_0(u)\log \frac{\rho_0(u)}{\rho} + (1-\rho_0(u))\log \frac{1-\rho_0(u)}{1-\rho}\right\} du
\end{equation*}
and 
\begin{equation*}
I_{JV}(\rho) =  -\inf_\phi \int_0^1 \int_\R \left \{h(\rho)\partial_t\phi + g(\rho)\partial_u\phi \right\} dudt
\end{equation*}
where the infimum is over $\phi\in C^\infty_c((0,1)\times \R)$ such that $0\leq \phi\leq 1$.  Here, $h$ is the convex function $h(u)= u\log(u) + (1-u)\log(1-u)$ and $g(u) = u(1-u)\log(u/(1-u))-u$ is its dual so that $\dot{g} = \dot{f}\dot{h}$ with $f(u) = u(1-u)$. 

The rate $K$ represents the large deviation cost of changing the initial condition, and $I_{JV}$ is the dynamical cost of the profile.
Moreover, the `Jensen-Varadhan' rate $I_{JV}$ is finite only on weak `nonentropic' solutions of \eqref{Burgers}, with initial condition $\rho_0$ (cf. \cite{Jensen_thesis}[Lemmas 2.1, 2.2, and Corollary 2.3]), and vanishes at the entropic solution.  Informally, $I_{JV}$ is the positive charge of `$\big(\partial_t h(\rho) + \partial_x g(\rho)\big)dtdu$'. 
   See \cite{Jensen_thesis}, \cite{Varadhan_Jensen}, \cite{Vilensky}, \cite{Quastel-Tsai} for more details.  

In passing, we note such results in ASEP have not yet been shown.  But, in symmetric exclusion, large deviations in diffusive scale have been carried out in \cite{KOV}; see also \cite{BGL}.

Although we will not use the full form of the $\pi^{N}$-large deviations, an element of the proof for TASEP, that we generalize to ASEP, will be key to capture the large deviation behavior of $X_{Nt}/N$.  Namely, we discuss in Sections \ref {superexponential subsection} and \ref{section superexponential} that the scaled microscopic `height' function of the particles cannot be less than the continuum `height' function with respect to the entropic evolution without incurring superexponential cost.

\subsection{Discussion of main results}
\label{results section}
In this context, we focus our work on the optimal structure of the `bulk' particle mass and tagged particle motion to achieve large deviation events of the type $\{X_{Nt}/N \geq A\}$ or $\{X_{Nt}/N\leq A\}$, where the macroscopic destination $A \neq \gamma(1-\rho)t$ is not equal to the typical level, at a fixed macroscopic time. Without loss of generality, we will take $t=1$.  

 The difficulties in the analysis involve understanding how the bulk should organize with respect to the atypical tagged particle motion.   
 We comment that, since we start from a random initial condition $\nu_\rho(\cdot|\eta(0)=1)$, the natural scale for the large deviations is the `Euler' scale $N$.  Indeed, slowing down the tagged particle only requires the rate of a single particle to be modified up to microscopic time $N$, a large deviation cost of $O(N)$.  However, to speed up the tagged particle motion, if we were to start from a deterministic condition, then typically $O(N)$ particles rates would have to be altered, a cost of $O(N^2)$.  With a random initial condition though, one may simply remove say $O(N)$ particles to give the tagged particle room to manouever, at a smaller large deviation cost of $O(N)$. 

Now, in the setting of TASEP, when $\gamma=1$ and $t=1$, there are three categories of behaviors depending on whether $0\leq A<1-\rho$, $1-\rho<A< 1$, and $A\geq 1$, each with its own typical optimal strategy.  These strategies are such that their large deviation costs match the already known Poisson process rate function \eqref{Tasep_rate_function} when $t=1$.

More generally in ASEP, however, as the tagged particle can go left of the origin, there is another category to consider.  In particular, the categories are $A<0$, $0\leq A<\gamma(1-\rho)$, $\gamma(1-\rho)<A< \gamma$ and $A\geq \gamma$.  We establish here lower large deviation bounds in all regions, some of them though are likely nonoptimal.  Although in ASEP we do not have existence a priori of a large deviation rate function, a suggestion from the Poisson process with mean $\gamma(1-\rho)$ approximation \eqref{approximate_formula} is that for $A$ near the zero, $\gamma(1-\rho)$, the rate function if it exists, would be the same as for the Poisson process.

However, in the general ASEP setting, we show in the two `upper tail' settings, when $A>\gamma(1-\rho)$, that the lower bound costs can be matched by upperbound minimizations with respect to the cost of changing the initial distribution $K$, as well as possibly the cost of changing the rates of the tagged particle;
see Theorems \ref{thm:UB A large} and \ref{thm:UB A not so large}, and the calculus of variations problems in Proposition \ref{Prop:UB} and Section \ref{calc var section} which may be of independent interest.  As a consequence, we may state an `upper tail' large deviations principle in Corollary \ref{thm: upper tail}.

In particular, we establish, independently of the exact and approximate Poisson process formulas for TASEP and ASEP, `upper tail' large deviations for $X_N/N$ in scale $N$ with an explicit rate function $I_\gamma$,
\begin{align}
\label{I_gamma}
I_{\gamma}(A) =\left\{\begin{array}{rl}
A \log(c) - pc - \frac{1-p}{c} +1 - A\log(1-\rho) - \gamma\rho & {\rm \ for \ } A\geq \gamma\\
A\log\left(\frac{A}{\gamma(1-\rho)}\right) - A + \gamma(1-\rho) & {\rm \ for \ }\gamma(1-\rho)<A<\gamma
\end{array}\right.
\end{align}
where
$c = \left(A + \sqrt{A^2 + 4p(1-p)}\right)/(2p)$.
  Naturally, the implicit upperbound rate mentioned before $\hat I_{\gamma, 1}$ satisfies $\hat{I}_{\gamma, 1}(A) = I_\gamma(A)$ for $A>\gamma(1-\rho)$.
	We note in TASEP, when $\gamma=p=1$, the formula \eqref{I_gamma} reduces to the rate function \eqref{Tasep_rate_function} $I_1=I_{1,1}$.  Moreover, here $\hat I_{1,1}=I_1$ for all $A\in \R$.
			
			However, in ASEP, only in the region $\gamma(1-\rho)<A<\gamma$ closer to the zero $\gamma(1-\rho)$ does $I_\gamma(A)$ match a rate function derived from Poisson approximation \eqref{approximate_formula}. In
	the farther regime $A\geq \gamma$, the optimal cost involves changing the $p,q=1-p$ birth-death rates of the tagged particle, which is not a Poisson process with mean parameter $\gamma = p-q$, and as a consequence the rate $I_\gamma(A)$ is smaller than what would be found under Poisson approximation.

Finally, we describe briefly the lower bound strategies involved in each regime.
\vskip .1cm

{\it ASEP: $A\geq \gamma$ (Theorem \ref{thm:ASEP A large}).}
For the tagged particle to move so that $X_N\geq AN$, it must speed up its intrinsic jump rate, and must not be obstructed by other particles.  Given that it is too costly to speed up $O(N)$ particles, 
we must remove particles from the system to achieve the tagged particle deviation.  Indeed, we specify an initial condition for the distribution of particles, where there are no particles between $0$ and $(A-\gamma)N$, and prescribe a change-of-rates for the tagged particle, which taken together has large deviation cost equal to $I_\gamma(A)$.  
       
\vskip .1cm

{\it ASEP: $\gamma(1-\rho)<A< \gamma$ (Theorem \ref{thm:ASEP A not so large}).}
Here, we still remove particles and change the initial profile of the particle distribution, so that near the origin the density is $1-A/\gamma$.  Then, the tagged particle, without changing its rates, will flow at macroscopic rate $A$, faster than the typical macroscopic velocity $\gamma(1-\rho)$.  The associated large deviation cost will match $I_\gamma(A)$.

\vskip .1cm

{\it TASEP: $0\leq A< 1-\rho$ (Theorem \ref{thm:TASEP nonentropy}).}
Consider first the case $A=0$, that is the event that the tagged particle doesn't move macroscopically up to time $t=1$.  One way to achieve this is to suppress the clock so that it doesn't ring up to this time, for which the large deviation cost is $-\log(e^{-1})=1$, strictly larger than $I_1(0)=1-\rho$.  

Another way is to prevent jumps by blocking the motion with other particles.  That is, one can change the initial distribution so that the macroscopic profile is a step function with value $1$ for locations $0\leq u\leq 1$, and value $\rho$ for other $u$.  The corresponding entropic flow involves a a rarefaction wave, whose edge moves back to the origin at time $1$.  
However, the large deviation cost will be $-\log(\rho)> 1-\rho = I_1(0)$ again.  

One may block though with less particles, say with an initial distribution whose macroscopic profile is the step function with value $1$ for $0\leq u\leq \rho$ and value $\rho$ for $u$ otherwise.  We now move the mass {\it nonentropically}, keeping the Riemann step, but moving the shock backwards to the origin with velocity $-\rho$.  Physically, one can interpret that a particle at the front edge  is `slowed' before it moves on.  Alternatively, in terms of particle-hole duality, if we look at the evolution of the `holes', initially $0<u\leq \rho$ is empty, and value $1-\rho$ for $u$ otherwise; then, the velocity of the `hole', initially at $x=\rho N$, is slowed from $-1$ to $-\rho$ up to time $1$.  The large deviation cost $I_{JV}$ of this nonentropic but `nice' solution to \eqref{Burgers}, when added to the $K$ cost changing the initial profile,  
equals $1-\rho = I_1(0)$.

For $0<A<1-\rho$, a related `blocking' scheme and nonentropic evolution, still without changing the rates of the tagged particle, yields also the desired large deviation cost $I_1(A)$.

\vskip .1cm
{\it Other ASEP categories.}
Here, one can follow the same scheme as in TASEP for the category $0\leq A< \gamma(1-\rho)$, however in terms of an entropic evolution,  
and derive a strategy to achieve the event 
$\{X_N \leq A\}$; see Proposition \ref{Prop:ASEP A small}.
  When $A<0$, we give lower bound schemes involving entropic flows in Proposition \ref{Prop:ASEP A<0}.  We also give lower bounds in the case $A=0$ in ASEP in Corollary \ref{Prop:ASEP A=0}.
The associated costs of these strategies would be lower bounds for a possible rate function $I_\gamma(A)$.

\section{Results}
\label{section results}
Define, for $A\in \R$, the tail sets
$$E_A = \left\{ X_N/N \geq A\right\} \ \ \text{and} \ \ F_A = \left\{X_N/N\leq A\right\}.$$
Recall that the rate function $I_\gamma$ given in \eqref{Tasep_rate_function} and \eqref{I_gamma} for TASEP and ASEP.

We now state lower and upper bounds for the liminf and limsup of
$\frac{1}{N}\log \P_\rho\left(B\right)$
for $B$ equal to $E_A$ or $F_A$ as $A$ varies over different ranges in Sections \ref{results: lb section} and \ref{results: ub section}.  In Section \ref{results: upper tail}, we state an `upper tail' large deviation principle with rate function $I_\gamma$.

\subsection{Lower bound optimal strategies}
\label{results: lb section}
We first describe `lower bound' (LB) strategies and calculate their costs in ASEP corresponding to $0<\gamma\leq 1$. 

Denote by $\B_{r, \ell}$ the law of the joint process $(\eta_{Nt}, X_{Nt})$, starting from $(\eta_0, X_0)$, where the unscaled jump rates of the tagged particle are $r(1-\eta(X+1))$ and $\ell(1-\eta(X-1))$ of moving right and left respectively.  The law of the process \eqref{joint}, speeded up by $N$ and starting from $(X_0, \eta_0)$, corresponds to $\B_{p,q}$ when $r=p$ and $\ell=q=1-p$.
Let also $p_A= pc$ and $q_A = q/c$ where $c = (A+ \sqrt{A^2+4pq})/2p$.

\begin{thm}[ASEP: $A\geq\gamma$]
\label{thm:ASEP A large}
For ASEP with $0<\gamma\leq 1$, and $0<\epsilon$, consider the profile
\begin{align}
\label{u-A-large}
\rho_{0,\epsilon}(u) = \left\{\begin{array}{ll}
\rho& {\rm for \ }u<0\\
0&{\rm for \ }0\leq u< A-\gamma+\epsilon\\
\frac{1}{2\gamma}(u-A+\gamma-\epsilon)& {\rm for \ }A-\gamma+\epsilon \leq u< A-\gamma+\epsilon+2\rho\gamma\\
\rho& {\rm for \ }u\geq A-\gamma+\epsilon + 2\rho\gamma.
\end{array}\right.
\end{align}

Then, under the modified process measure, with $A_\epsilon = A+\epsilon/2$,
$$d\Q_N = \frac{d\nu^{N}_{\rho_{0,\epsilon}(\cdot)}(\cdot|\eta(0)=1)}{d\nu_\rho(\cdot|\eta(0)=1)} \frac{d\B_{p_{A_\epsilon}, q_{A_\epsilon}}}{d\B_{p,q}} d\P_\rho$$
the event $E_A$ is typical, $\lim_{N\uparrow\infty}\Q_N(E_A)=1$, and
the entropy cost
$$\lim_{\epsilon\downarrow 0}\liminf_{N\uparrow\infty}\frac{1}{N}E_{\Q_N}\left[ \log \frac{d\P_\rho}{d\Q_N}\right] \geq -I_\gamma(A),$$
from which we deduce that
$$\liminf_{N\rightarrow\infty} \frac{1}{N}\log \P_\rho(E_A) \geq -I_\gamma(A).$$
\end{thm}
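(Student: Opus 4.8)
The plan is a change-of-measure argument built around the tilted law $\Q_N$, using the entropy inequality together with a law of large numbers for $\tfrac1N\log\frac{d\Q_N}{d\P_\rho}$ under $\Q_N$. First I would verify that $\Q_N$ is a probability measure and record the factorization
\[
\frac{d\Q_N}{d\P_\rho} \;=\; \frac{d\nu^{N}_{\rho_{0,\epsilon}(\cdot)}(\cdot|\eta(0)=1)}{d\nu_\rho(\cdot|\eta(0)=1)}(\eta_0)\;\cdot\;\frac{d\B_{p_{A_\epsilon},q_{A_\epsilon}}}{d\B_{p,q}}(\eta_\cdot),
\]
so that $H(\Q_N|\P_\rho) = H(\mu_\Q|\mu_\P) + E_{\Q_N}\!\big[\log\frac{d\B_{p_{A_\epsilon},q_{A_\epsilon}}}{d\B_{p,q}}\big]$ with $\mu_\Q = \nu^{N}_{\rho_{0,\epsilon}(\cdot)}(\cdot|\eta(0)=1)$ and $\mu_\P = \nu_\rho(\cdot|\eta(0)=1)$. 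The first term is a relative entropy between product measures, hence $\tfrac1N H(\mu_\Q|\mu_\P)$ is, up to the vanishing $\eta(0)$-contribution, a Riemann sum for $K(\rho_{0,\epsilon})$. For the second term I would invoke the Girsanov formula for pure-jump processes: since $\B_{p_{A_\epsilon},q_{A_\epsilon}}$ and $\B_{p,q}$ differ only in the tagged particle's jump rates, writing $R_N,L_N$ for the numbers of its right and left jumps in $[0,N]$ and using $p_{A_\epsilon}=pc_{A_\epsilon}$, $q_{A_\epsilon}=q/c_{A_\epsilon}$ and $R_N-L_N=X_N$,
\[
\log\frac{d\B_{p_{A_\epsilon},q_{A_\epsilon}}}{d\B_{p,q}}
= X_N\log c_{A_\epsilon} - p(c_{A_\epsilon}-1)\!\int_0^N\!\one\{\eta_s(X_s+1)=0\}\,ds + \frac{q(c_{A_\epsilon}-1)}{c_{A_\epsilon}}\!\int_0^N\!\one\{\eta_s(X_s-1)=0\}\,ds.
\]

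The heart of the argument is the behavior of the tagged particle under $\Q_N$. One checks from the definition of $c$ that $p_{A_\epsilon}-q_{A_\epsilon}=A_\epsilon$ and $c_{A_\epsilon}\ge1$ (as $A_\epsilon\ge\gamma$), so the tagged particle's rates are speeded up to the right and slowed to the left. By construction of $\rho_{0,\epsilon}$ the tagged particle begins at the right edge of the density-$\rho$ left reservoir, with an empty stretch of macroscopic length $A-\gamma+\epsilon$ in front of it; the entropic solution of \eqref{Burgers} keeps the left edge of the right-hand bulk near $(A-\gamma+\epsilon)+\gamma t$ at time $t$, which remains strictly ahead of $A_\epsilon t$ on all of $[0,1]$ (since $A_\epsilon<A+\epsilon$), while the influence of the left reservoir spreads rightward only at speed $\gamma<A_\epsilon$. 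Hence, using the step-profile law of large numbers of Section \ref{LLN section} and comparing $X_t$ from below with a biased walk of rates $p_{A_\epsilon},q_{A_\epsilon}$ (valid as long as the site to the right of the tagged particle is empty), one obtains, in $\Q_N$-probability, $X_N/N\to A_\epsilon>A$ and $\tfrac1N\int_0^N\one\{\eta_s(X_s\pm1)=0\}\,ds\to1$. The first convergence gives immediately $\lim_N\Q_N(E_A)=1$. Since $|X_N|$ is dominated by the number of tagged jumps up to time $N$, which under $\Q_N$ has uniformly bounded exponential moments, these convergences also hold in $L^1(\Q_N)$ after division by $N$, and the dynamic term contributes $A_\epsilon\log c_{A_\epsilon} - p(c_{A_\epsilon}-1) + \tfrac{q(c_{A_\epsilon}-1)}{c_{A_\epsilon}} = A_\epsilon\log c_{A_\epsilon} - pc_{A_\epsilon} - q/c_{A_\epsilon} + 1$. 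Together with the static term this gives $\lim_N\tfrac1N H(\Q_N|\P_\rho)=C_\epsilon := K(\rho_{0,\epsilon}) + A_\epsilon\log c_{A_\epsilon} - pc_{A_\epsilon} - q/c_{A_\epsilon} + 1$. A direct integration of the relative-entropy integrand over the empty interval and the linear ramp of $\rho_{0,\epsilon}$ shows $K(\rho_{0,\epsilon})\to -A\log(1-\rho)-\gamma\rho$ as $\epsilon\downarrow0$, whence $C_\epsilon\to I_\gamma(A)$ by the $A\ge\gamma$ branch of \eqref{I_gamma} (using $1-p=q$). In particular $\liminf_N\tfrac1N E_{\Q_N}[\log\frac{d\P_\rho}{d\Q_N}] = -C_\epsilon \to -I_\gamma(A)$, the claimed entropy estimate.

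For the probability bound, note that $\tfrac1N\log\frac{d\Q_N}{d\P_\rho}$ is the sum of the static log-density — a normalized sum of independent terms converging, by a second-moment bound, to $K(\rho_{0,\epsilon})$ — and the dynamic term, which by the previous step concentrates at $A_\epsilon\log c_{A_\epsilon}-pc_{A_\epsilon}-q/c_{A_\epsilon}+1$; thus $\tfrac1N\log\frac{d\Q_N}{d\P_\rho}\to C_\epsilon$ in $\Q_N$-probability. Fixing $\delta>0$ and setting $G_N=\{\tfrac1N\log\frac{d\Q_N}{d\P_\rho}\le C_\epsilon+\delta\}$, we have $\Q_N(E_A\cap G_N)\to1$ and
\[
\P_\rho(E_A)\;\ge\;E_{\Q_N}\!\Big[\one_{E_A\cap G_N}\,\tfrac{d\P_\rho}{d\Q_N}\Big]\;\ge\;e^{-N(C_\epsilon+\delta)}\,\Q_N(E_A\cap G_N),
\]
so $\liminf_N\tfrac1N\log\P_\rho(E_A)\ge -(C_\epsilon+\delta)$; letting $\delta\downarrow0$ and then $\epsilon\downarrow0$ yields $\liminf_N\tfrac1N\log\P_\rho(E_A)\ge -I_\gamma(A)$. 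The step I expect to be the main obstacle is the tagged-particle analysis under $\Q_N$ used above: proving that, with the enhanced rates $p_{A_\epsilon},q_{A_\epsilon}$ and the profile $\rho_{0,\epsilon}$, the tagged particle does reach macroscopic velocity $A_\epsilon$ and spends an asymptotically negligible fraction of time with a particle immediately to either side — since the tagged-particle hydrodynamics \eqref{ODE-reza} is unavailable once $\rho_{0,\epsilon}$ vanishes to the right of the origin, one must argue directly that the particle escapes the left reservoir and never catches the right bulk. The Girsanov identity, the Riemann-sum entropy asymptotics, and the concentration/entropy inequality are otherwise routine.
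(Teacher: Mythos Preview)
Your proposal is correct and follows the same overall strategy as the paper: change of measure to $\Q_N$, Girsanov identity for the dynamical part, a coupling/LLN argument to show $E_A$ is typical under $\Q_N$, and the standard entropy lower bound. The Girsanov formula, the Riemann-sum entropy limit $K(\rho_{0,\epsilon})\to -A\log(1-\rho)-\gamma\rho$, and the identification $C_\epsilon\to I_\gamma(A)$ all match.

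There is one genuine difference worth noting. You assert that under $\Q_N$ both $\tfrac1N\int_0^N\one\{\eta_s(X_s+1)=0\}\,ds\to1$ \emph{and} $\tfrac1N\int_0^N\one\{\eta_s(X_s-1)=0\}\,ds\to1$, i.e.\ the tagged particle eventually has empty sites on both sides. The right-hand claim is exactly what the paper proves via coupling (the next particle to the right dominates the particle $y^*$ of Proposition~\ref{BD_LLN_prop1}, Part~2, while $X$ is dominated by the lead particle $x^*$ from a step profile with rates $p',q'$). The left-hand claim is plausible (the trailing particle has ordinary rates and hence speed at most $\gamma<A_\epsilon$), but the paper does \emph{not} prove or use it. Instead, it observes that since $A_\epsilon>\gamma$ one has $c_{A_\epsilon}\ge1$, hence the term $q(c_{A_\epsilon}^{-1}-1)\int_0^N(1-\eta_s(X_s-1))\,ds$ in the Girsanov exponent is $\le q(c_{A_\epsilon}^{-1}-1)N$ regardless of the left-neighbor occupation; similarly, on the event $B_2=\{\eta_{Ns}(X_{Ns}+1)=0\ \forall s\}$ the compensator identity gives $\tfrac1N X_{Ns}-(p'-q')s = q'\int_0^s\eta_{Nr}(X_{Nr}-1)\,dr + M_1(Ns)/N \ge M_1(Ns)/N$, so the lower bound on $X_{Ns}$ needs no information about the left site either. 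This sign argument yields only an \emph{upper} bound on $\tfrac1N\log\tfrac{d\B_{p',q'}}{d\B_{p,q}}$ (equivalently a lower bound on $\tfrac1N E_{\Q_N}[\log\tfrac{d\P_\rho}{d\Q_N}]$), but that is all that is needed, and it sidesteps the extra coupling you would have to carry out. Your concentration-based derivation of the final probability bound is a legitimate alternative to the paper's Jensen-inequality argument; both are standard.
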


\begin{thm}[ASEP: $\gamma(1-\rho)<A< \gamma$]
\label{thm:ASEP A not so large}
For ASEP with $0<\gamma\leq 1$, and $0<\epsilon<\gamma - A$, 
consider the profile
\begin{align}
\label{u-A-not so large}
\rho_{0, \epsilon}(u) = \left\{\begin{array}{ll}
\rho& {\rm for \ }u< 0\\
1-\tfrac{A_\epsilon}{\gamma} &{\rm for \ } 0\leq u< \gamma-A_\epsilon\\
1-\tfrac{A_\epsilon}{\gamma} + \tfrac{\rho - 1 + A_\epsilon/\gamma}{2(A_\epsilon-\gamma+\rho\gamma)}(u-\gamma+A_\epsilon)& {\rm for \ }\gamma-A_\epsilon\leq u< A_\epsilon-\gamma + 2\rho\gamma\\
\rho& {\rm for \ }u\geq A_\epsilon-\gamma + 2\rho\gamma
\end{array}
\right.
\end{align}
with $A_\epsilon = A +\epsilon$.

Then, under the modified process measure
$$d\Q_N = \frac{d\nu^{N}_{\rho_{0, \epsilon}(\cdot)}(\cdot|\eta(0)=1)}{d\nu_\rho(\cdot|\eta(0)=1)} d\P_\rho$$
the event $E_A$ is typical, $\lim_{N\uparrow\infty}\Q_N(E_A)\rightarrow 1$, and
the entropy cost
$$\lim_{\epsilon\downarrow 0}\liminf_{N\uparrow\infty}\frac{1}{N}E_{\Q_N}\left[ \log\frac{d\P_\rho}{d\Q_N}\right] \geq -I_\gamma(A),$$
from which we deduce that
$$\liminf_{N\rightarrow\infty} \frac{1}{N}\log \P_\rho(E_A) \geq -I_\gamma(A).$$
\end{thm}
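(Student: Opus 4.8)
The plan is to follow the standard change-of-measure (tilting) scheme for large deviation lower bounds, but here the tilt acts \emph{only} on the initial condition, not on the dynamics. Concretely, since $\Q_N \ll \P_\rho$ with the indicated Radon--Nikodym derivative, the entropy inequality gives, for the event $E_A$,
\begin{equation*}
\frac{1}{N}\log \P_\rho(E_A) \;\geq\; \frac{1}{N}\,\frac{E_{\Q_N}\!\big[\log(d\P_\rho/d\Q_N)\big] - \log 2}{\Q_N(E_A)} ,
\end{equation*}
so once we show (i) $\Q_N(E_A)\to 1$ and (ii) $\liminf_N \tfrac1N E_{\Q_N}[\log(d\P_\rho/d\Q_N)] \geq -I_\gamma(A)+o_\epsilon(1)$, letting $N\uparrow\infty$ then $\epsilon\downarrow 0$ yields the claim. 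Because the dynamics is unchanged, $d\P_\rho/d\Q_N$ is purely the ratio of the two initial product measures, so $\tfrac1N E_{\Q_N}[\log(d\P_\rho/d\Q_N)]$ converges to $-K(\rho_{0,\epsilon}(\cdot))$ where $K$ is the static relative-entropy functional recorded in Section \ref{connections section}; this is a routine law-of-large-numbers computation for sums of independent (conditioned at one site, a negligible boundary correction) Bernoulli log-likelihood ratios, and one checks directly that $\int_\R h_\rho(\rho_{0,\epsilon}(u))\,du \to A\log(A/\gamma(1-\rho)) - A + \gamma(1-\rho) = I_\gamma(A)$ as $\epsilon\downarrow 0$, using that $\rho_{0,\epsilon}$ equals $1-A_\epsilon/\gamma$ on an interval of length $\gamma - A_\epsilon$ and interpolates back to $\rho$ on a vanishing-contribution stretch. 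So part (ii) reduces to a short calculus exercise with the explicit profile \eqref{u-A-not so large}.

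The substance is part (i): under $\Q_N$ the system starts from the local equilibrium $\nu^N_{\rho_{0,\epsilon}(\cdot)}(\cdot\mid\eta(0)=1)$ and evolves under the \emph{unmodified} ASEP dynamics $\B_{p,q}$, and we must show the tagged particle reaches $X_N \geq AN$ with probability tending to $1$. This is exactly the hydrodynamic/LLN regime: to the left of the origin the density is $\rho$, and to the right it is (macroscopically) the constant $1-A_\epsilon/\gamma$ on $[0,\gamma-A_\epsilon]$, glued to $\rho$ further out. Since $\rho_{0,\epsilon}$ is positive on both sides of the origin, the tagged-particle LLN \eqref{ODE-reza} applies: $X_{Nt}/N \to v_t$ solving $\dot v = \gamma(1-\rho(t,v_t))$ with $\rho(t,\cdot)$ the entropic solution of \eqref{Burgers} started from $\rho_{0,\epsilon}$. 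One then checks that for short macroscopic times the relevant value $\rho(t,v_t)$ equals $1-A_\epsilon/\gamma$ — because the initial data is constant in a neighborhood the tagged particle traverses, and the characteristic emanating from the far interface at $u=\gamma-A_\epsilon$ has speed $\gamma(1-2(1-A_\epsilon/\gamma)) = 2A_\epsilon-\gamma < A_\epsilon$, so it does not overtake the tagged particle before time $1$ — giving $\dot v = \gamma(A_\epsilon/\gamma) = A_\epsilon$, hence $v_1 = A_\epsilon = A + \epsilon > A$. Thus $\Q_N(E_A)\to 1$. I would state this as an application of \eqref{ODE-reza} together with an elementary check on characteristics of the Burgers equation for the piecewise-constant-plus-linear profile, the length parameters in \eqref{u-A-not so large} having been chosen precisely so the interfaces stay out of the way and the profile smoothly reconnects to the reference density $\rho$ (keeping $K$ finite and making $\P_\rho \ll \nu^N_{\rho_{0,\epsilon}}$ mutually absolutely continuous, so $d\P_\rho/d\Q_N$ is well-defined).

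The main obstacle is making the ``$\rho(t,v_t) = 1-A_\epsilon/\gamma$ for all $t\in[0,1]$'' claim fully rigorous: one must verify that neither the interface at the origin (where density jumps from $\rho$ up to $1-A_\epsilon/\gamma$, which in the regime $A>\gamma(1-\rho)$ means $1-A/\gamma < \rho$, i.e.\ a downward jump in going left-to-right, producing a \emph{rarefaction} fan at $u=0$) nor the outer interface disturbs the density seen by the tagged particle before macroscopic time $1$, and that the Filippov solution $v_t$ of \eqref{ODE-reza} is unambiguous here. This is where the explicit choice of the interpolation window $[\gamma - A_\epsilon,\ A_\epsilon-\gamma+2\rho\gamma]$ and the slope in \eqref{u-A-not so large} enter: they guarantee the rarefaction from the origin spreads to the \emph{left} (characteristic speeds there are $\leq \gamma(1-2\rho) < \gamma(1-2(1-A_\epsilon/\gamma)) = 2A_\epsilon-\gamma$) while the tagged particle moves right at speed $A_\epsilon$, so the tagged particle stays in the constant-density plateau. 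I would isolate this as a lemma on the entropic solution started from \eqref{u-A-not so large}, prove it by the method of characteristics (the data being piecewise $C^1$ with finitely many interfaces), and then feed it into \eqref{ODE-reza}; everything else — the entropy inequality, the convergence of $\tfrac1N\log(d\P_\rho/d\Q_N)$ to $-K$, and the $\epsilon\downarrow0$ limit of $K(\rho_{0,\epsilon})$ to $I_\gamma(A)$ — is bookkeeping.
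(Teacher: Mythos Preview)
Your approach is correct and essentially the same as the paper's: tilt only the initial condition, compute the static relative entropy $K(\rho_{0,\epsilon})\to I_\gamma(A)$, and use a tagged-particle LLN under the new initial profile to get $\Q_N(E_A)\to 1$. The paper invokes Proposition~\ref{BD_LLN_prop} (the mass-balance relation $\int_{-\infty}^{v_1}\rho(1,u)\,du=\int_{-\infty}^0\rho_{0,\epsilon}(u)\,du$) rather than the ODE \eqref{ODE-reza}, but either route yields $v_1=A_\epsilon>A$.

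Two small inaccuracies to fix. First, your characteristic analysis has an inequality reversed: in the rarefaction fan emanating from $u=0$ the speeds range from $\gamma(1-2\rho)$ (left edge) up to $2A_\epsilon-\gamma$ (right edge), so they are $\geq \gamma(1-2\rho)$, not $\leq$. The right edge may move rightward (it does whenever $A_\epsilon>\gamma/2$); the point is only that $2A_\epsilon-\gamma<A_\epsilon$ since $A_\epsilon<\gamma$, so the tagged particle outruns it. Second, the interpolation window $[\gamma-A_\epsilon,\,A_\epsilon-\gamma+2\rho\gamma]$ has length $2(A_\epsilon-\gamma(1-\rho))>0$, which does \emph{not} vanish as $\epsilon\downarrow 0$; its contribution to $K$ is nonzero and must be included in the calculus exercise (it combines with the constant-density piece to give exactly $I_\gamma(A)$, as in the paper's \eqref{1-rho<A<1}).
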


The next result is stated only for TASEP $(\gamma=1)$, with respect to events $F_A$ for $0\leq A<1-\rho$.  Define the `holes' process $\chi_t$ where $\chi_t(x) = 1-\eta_t(x)$ for $x\in \Z$.  Here, by the `particle-hole' relation, we see $\chi_t$ is an exclusion process with rates $\widehat{p}(j) = 1-p(j)$ for $j=\pm 1$; when $\eta_t$ is TASEP, then $\chi_t$ is also a TASEP but jumping only to the left.

\begin{thm}[TASEP: $0\leq A< 1-\rho$]
\label{thm:TASEP nonentropy}
For TASEP ($\gamma=1$), and $0\leq \epsilon<1$, consider the nonentropic solution to \eqref{Burgers}:
\begin{align}
\label{nonentropy evo}
\rho_\epsilon(t,u) = \left\{\begin{array}{ll}
\rho & {\rm for \ }u<(A_\epsilon-\rho)t\\
1-A_\epsilon& {\rm for \ }(A_\epsilon-\rho)t\leq u<\rho  + (A_\epsilon-\rho)t\\
\rho& {\rm for \ }u\geq \rho  +(A_\epsilon-\rho)t
\end{array}\right.
\end{align}
for $0\leq t\leq 1$ and $A_\epsilon = A(1-\epsilon)$.

For $0<A<1-\rho$, 
there are TASEP process measures $\mathcal{R}_N$, 

\noindent starting from $\nu^{N}_{\rho_\epsilon(0,\cdot)}(\cdot|\eta(0)=1)$, such that, for each $\delta>0$,
$$\lim_{N\rightarrow\infty} \mathcal{R}_N\left(\{\pi^N_t: 0\leq t\leq 1\} \in B_\delta(\rho_\epsilon du)\right) =1$$ 
where $B_\delta$ is a $\delta$-ball around $\rho_\epsilon(\cdot, u) du$ in the Skorohod topology.
Define, the modified process measure
$$d\Q_N = \frac{d\nu^{N}_{\rho_\epsilon(0,\cdot)}(\cdot|\eta(0)=1)}{d\nu_\rho(\cdot|\eta(0)=1)}\frac{d\mathcal{R}_N}{d\P_{\nu^N_{\rho_\epsilon(0,\cdot)}(\cdot|\eta(0)=1)}} d\P_\rho.$$

When $A=0$, let $\widehat{\mathcal{B}}_\epsilon$ be the law of the process speeded up by $N$, starting from $\nu^{N}_{\rho_0(0,\cdot)}(\cdot|\eta(0)=1)$, where the nearest `hole' to the right of $x=\lfloor\rho N\rfloor$ has unscaled left jump rate $\widehat{p}(-1)=\rho - \epsilon$ for $0<\epsilon<\rho$ (instead of $1$) and the same vanishing right jump rate, $\widehat{p}(1)=0$, up to macroscopic time $t=1$.  Form, in this case, the modified process measure
$$d\Q_N = \frac{d\nu^{N}_{\rho_0(0,\cdot)}(\cdot|\eta(0)=1)}{d\nu_\rho(\cdot|\eta(0)=1)}\frac{d\widehat{\mathcal{B}}_\epsilon}{d\P_{\nu^N_{\rho_0(0,\cdot)}(\cdot|\eta(0)=1)}} d\P_\rho.$$

In both cases, when $0\leq A<1-\rho$, the event $F_A$ is typical, $\lim_{N\uparrow\infty} \Q_N(F_A) =1$, and
the entropy cost
$$\lim_{\epsilon\downarrow 0}\liminf_{N\rightarrow\infty}\frac{1}{N}E_{\Q_N}\left[ \log\frac{d\P_\rho}{d\Q_N}\right] \geq -I_\gamma(A),$$
from which we deduce that
$$\liminf_{N\rightarrow\infty} \frac{1}{N}\log \P_\rho(F_A) \geq -I_\gamma(A).$$
\end{thm}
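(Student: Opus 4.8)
The plan is to prove all three displayed assertions by a single tilting (change‑of‑measure) argument, with the same skeleton in the two regimes $0<A<1-\rho$ and $A=0$. In both, $\Q_N$ is obtained from $\P_\rho$ by modifying (i) the initial law from $\nu_\rho(\cdot|\eta(0)=1)$ to the local equilibrium $\mu=\nu^N_{\rho_\epsilon(0,\cdot)}(\cdot|\eta(0)=1)$ carrying the prescribed macroscopic profile at time $0$, and (ii) the dynamics, through $\mathcal R_N$ when $0<A<1-\rho$ and through $\widehat{\mathcal B}_\epsilon$ when $A=0$, so that the tilted evolution follows the nonentropic trajectory $\rho_\epsilon(\cdot,u)\,du$ (resp.\ keeps a dense block around the tagged particle). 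The three ingredients are: (a) $\Q_N(F_A)\to 1$; (b) the normalized relative entropy $\tfrac1N H(\Q_N\,|\,\P_\rho)=-\tfrac1N E_{\Q_N}[\log(d\P_\rho/d\Q_N)]$ converges, as $N\to\infty$ and then $\epsilon\downarrow 0$, to $I_\gamma(A)$; and (c) the passage from (a) and (b) to $\liminf_N\tfrac1N\log\P_\rho(F_A)\ge -I_\gamma(A)$.

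Ingredient (c) is the standard tilting inequality, which uses $d\Q_N/d\P_\rho$ only through its relative entropy. Writing $\P_\rho(F_A)=\Q_N(F_A)\,E_{\Q_N(\cdot|F_A)}[d\P_\rho/d\Q_N]$ and applying Jensen to $\log$, one gets $\log\P_\rho(F_A)\ge \log\Q_N(F_A)+E_{\Q_N(\cdot|F_A)}[\log(d\P_\rho/d\Q_N)]$; and since $E_{\Q_N}[\one_{F_A^c}\log(d\Q_N/d\P_\rho)]\ge \Q_N(F_A^c)\log\Q_N(F_A^c)\ge -e^{-1}$, this gives $\log\P_\rho(F_A)\ge \log\Q_N(F_A)+\tfrac1{\Q_N(F_A)}\big(-H(\Q_N\,|\,\P_\rho)-e^{-1}\big)$. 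Dividing by $N$, letting $N\to\infty$ with (a) (so $\Q_N(F_A)\to1$ and $\tfrac1N\log\Q_N(F_A)\to0$) and (b), and then $\epsilon\downarrow0$, yields the claim.

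For $0<A<1-\rho$, take $\mathcal R_N$ to be the tilted TASEP measure furnished by the Jensen--Varadhan lower bound for the empirical‑measure large deviations, applied to the ``nice'' profile $\rho_\epsilon$: as $\rho_\epsilon$ is a weak, nonentropic solution of \eqref{Burgers} with the single moving Riemann jump at $u=\rho+(A_\epsilon-\rho)t$ and $I_{JV}(\rho_\epsilon)<\infty$ (cf.\ \cite{Varadhan_Jensen,Jensen_thesis}), one obtains $\mathcal R_N$ started from $\mu$ with $\mathcal R_N(\{\pi^N_t:0\le t\le1\}\in B_\delta(\rho_\epsilon\,du))\to1$ and $\limsup_N\tfrac1N H(\mathcal R_N\,|\,\P_\mu)\le I_{JV}(\rho_\epsilon)$, where $\P_\mu$ is ordinary TASEP from $\mu$. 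For (a): under $\Q_N$ the empirical field is pinned to $\rho_\epsilon$, and the tagged particle, which starts at $u=0$ on the left edge of the plateau $\{\rho_\epsilon(t,\cdot)=1-A_\epsilon\}=[(A_\epsilon-\rho)t,\ \rho+(A_\epsilon-\rho)t)$, stays strictly inside it for $0<t<1$ (its gap to the nonentropic shock is $\rho(1-t)>0$); since near its trajectory the dynamics coincide with ordinary TASEP, the tagged law of large numbers \eqref{ODE-reza} applies locally and forces $X_{Nt}/N\to v_t$ with $\dot v=1-\rho_\epsilon(t,v_t)=A_\epsilon$, so $X_N/N\to A_\epsilon<A$ and $\Q_N(F_A)\to1$. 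For (b): the chain rule gives $H(\Q_N\,|\,\P_\rho)=H(\mu\,|\,\nu_\rho(\cdot|\eta(0)=1))+H(\mathcal R_N\,|\,\P_\mu)$; the first term, a sum of per‑site Bernoulli relative entropies, is $N\,K(\rho_\epsilon(0,\cdot))+o(N)$, and the second is $\le N\,I_{JV}(\rho_\epsilon)+o(N)$. A direct computation with the explicit $h,g$ and the piecewise‑constant $\rho_\epsilon$ gives the exact identity $K(\rho_\epsilon(0,\cdot))+I_{JV}(\rho_\epsilon)=I_1(A_\epsilon)$ (the initial $K$‑cost and the entropic‑shock charge in $I_{JV}$ cancel), hence $\tfrac1N H(\Q_N\,|\,\P_\rho)\le I_1(A_\epsilon)+o(1)$ and $I_1(A_\epsilon)\to I_1(A)=I_\gamma(A)$ as $\epsilon\downarrow0$; this identity is also recorded in Section \ref{calc var section}.

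For $A=0$ the perturbed dynamics $\widehat{\mathcal B}_\epsilon$ differs from TASEP only in one rate: the left‑jump rate of the hole initially just right of the packed block $[0,\lfloor\rho N\rfloor)$ (equivalently, the right‑jump rate of the rightmost block particle) is lowered from $1$ to $\rho-\epsilon$. The main obstacle of the proof lies here: one must show that for fixed $\epsilon>0$ this single slow defect keeps a density‑$1$ block spanning macroscopically $[-\rho t,\ \rho-(\rho-\epsilon)t]$ for all $t\in[0,1]$ -- a slow‑bond/slow‑particle hydrodynamic statement in which the defect acts as a moving barrier whose left neighbor stays a particle and which no hole overtakes -- so that the tagged particle at the origin remains interior to the block and cannot advance, giving $X_N=0$ and $\Q_N(F_0)\to1$. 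Granting this, (b) is again a factorization: the initial part contributes $N\,K(\rho_0(0,\cdot))+o(N)=-N\rho\log\rho+o(N)$, while, the slowed clock being enabled throughout macroscopic time $[0,1]$, the dynamical part is the relative entropy of a rate‑$(\rho-\epsilon)$ Poisson clock against a rate‑$1$ clock over horizon $N$, i.e.\ $N\big((\rho-\epsilon)\log(\rho-\epsilon)-(\rho-\epsilon)+1\big)+o(N)$; the sum tends, as $\epsilon\downarrow0$, to $-\rho\log\rho+\rho\log\rho-\rho+1=1-\rho=I_\gamma(0)$. The remaining work is routine: controlling the $o(N)$ errors (the $\eta(0)=1$ conditioning, the profile edges, super‑exponential replacement of the empirical measure near the defect/shock) and verifying the Skorohod‑sense convergence of $\pi^N_\cdot$ under the tilted dynamics.
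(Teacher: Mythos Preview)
Your overall tilting skeleton matches the paper's, but two points deserve correction.

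\medskip
\noindent\textbf{The case $A=0$.} What you flag as the ``main obstacle'' is in fact elementary in TASEP and requires no hydrodynamic or slow--bond argument. Under the initial measure $\nu^N_{\rho_0(0,\cdot)}(\cdot|\eta(0)=1)$, every site from $0$ to $y_0-1$ is occupied (here $y_0$ is the first hole to the right of $\lfloor\rho N\rfloor$). In TASEP no interior particle of a contiguous block can move, since each is blocked by its right neighbor; the only possible transition in the block is the rightmost particle jumping into the designated hole. Hence the segment between the tagged particle at $0$ and the tagged hole at $y_s$ remains packed \emph{deterministically} for as long as $y_s\ge 1$, so $\eta_s(y_s-1)=1$ identically on that time interval. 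Consequently the slowed hole performs a pure rate--$(\rho-\epsilon)$ Poisson walk to the left, and since $y_0/N\to\rho$ one has $y_{Ns}/N\to\rho-(\rho-\epsilon)s>0$ uniformly in $s\in[0,1]$ in probability; the tagged particle never gets an empty right neighbor and $X_N=0$ with $\Q_N$--probability tending to $1$. The entropy computation you wrote is then exact, not merely heuristic. (Your claimed block $[-\rho t,\,\rho-(\rho-\epsilon)t]$ is also not what is needed: the left end of the relevant block is the origin.)

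\medskip
\noindent\textbf{The case $0<A<1-\rho$.} Your route to $\Q_N(F_A)\to 1$ via the ODE \eqref{ODE-reza} and the phrase ``the tagged LLN applies locally'' is not justified: that LLN is established for entropic evolutions under the untilted ASEP, whereas here the profile $\rho_\epsilon$ is nonentropic and the dynamics is the tilted measure $\mathcal R_N$. The paper avoids this by using Proposition~\ref{BD_LLN_prop}, which holds for \emph{any} sequence of process measures once $\pi^N_1\to\rho_\epsilon(1,\cdot)\,du$ in probability: the tagged position is read off from the integrated mass via the current relation \eqref{current-tagged}, and the unique $v_1$ solving \eqref{LLN soln for v} for the profile $\rho_\epsilon(1,\cdot)$ is $A_\epsilon$. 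This is exactly the convergence already furnished by $\mathcal R_N$.

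\medskip
A minor point: the identity $K(\rho_\epsilon(0,\cdot))+I_{JV}(\rho_\epsilon)=I_1(A)$ is not recorded in Section~\ref{calc var section}; it is obtained in the proof by adding the explicit evaluations \eqref{nonentropy rel entropy}, \eqref{nonentropy cost} (the latter via \eqref{IJV cost} with $L=1-A$, $R=\rho$).
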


In more general ASEP, a similar profile as in Theorem \ref{thm:TASEP nonentropy} may be constructed.  If the large deviation cost of such a profile were known, as it is for TASEP (cf. Section \ref{nonentropy cost section}), then we could also state a version of Theorem \ref{thm:TASEP nonentropy} for ASEP involving a nonentropic evolution.  However, here we give here a different lower bound, likely not sharp, using an entropic flow.

\begin{prop}[ASEP: $0<A<\gamma(1-\rho)$]
\label{Prop:ASEP A small}
Consider ASEP with $0<\gamma\leq 1$, and $0<\epsilon<1$.  Let
\begin{align}
\label{u-A-middle}
\rho_{0, \epsilon}(u) = \left\{\begin{array}{ll}
\rho& {\rm for \ }u<0\\
1-{A_\epsilon}/{\gamma} & {\rm for \ }0\leq u< \gamma\\
\rho& {\rm for \ }u\geq \gamma
\end{array}\right.
\end{align}
with $A_\epsilon = A(1-\epsilon)$.
Then, under the modified process measure
$$d\Q_N = \frac{d\nu^{N}_{\rho_{0, \epsilon}(\cdot)}(\cdot|\eta(0)=1)}{d\nu_\rho(\cdot|\eta(0)=1)} d\P_\rho$$
the event $F_A$ is typical, $\lim_{N\uparrow\infty} \Q_N(F_A)=1$, and
the entropy cost
\begin{align*}
&\lim_{\epsilon\downarrow 0}\liminf_{N\uparrow\infty}\frac{1}{N}E_{\Q_N}\left[\log \frac{d\P_\rho}{d\Q_N}\right]\\
&\ \ \ \ \  \geq
-\gamma\left[\big(1-\tfrac{A}{\gamma}\big)\log \frac{1-A/\gamma}{\rho} + \frac{A}{\gamma}\log \frac{A}{\gamma(1-\rho)}\right] :=-\mathcal{J}_1(A),
\end{align*}
from which we deduce that
$$\liminf_{N\rightarrow\infty} \frac{1}{N}\log \P_\rho(F_A) \geq - \mathcal{J}_1(A).$$ 
\end{prop}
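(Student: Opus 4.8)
The plan is a relative-entropy change-of-measure argument resting on the hydrodynamic limit and the tagged-particle law of large numbers \eqref{ODE-reza}. Since $\Q_N$ differs from $\P_\rho$ only in the initial distribution, the density $d\Q_N/d\P_\rho$ is a function of $\eta_0$ alone --- the ratio of the two Bernoulli product measures conditioned at the origin --- and consequently $E_{\Q_N}\big[\log(d\P_\rho/d\Q_N)\big]=-H\big(\nu^{N}_{\rho_{0,\epsilon}(\cdot)}(\cdot\,|\,\eta(0)=1)\,\|\,\nu_\rho(\cdot\,|\,\eta(0)=1)\big)$. Three points need checking: $(i)$ $\Q_N(F_A)\to 1$; $(ii)$ $\tfrac1N$ times the above relative entropy converges, as $N\to\infty$ and then $\epsilon\downarrow0$, to $\mathcal J_1(A)$; $(iii)$ the classical entropy inequality converts $(i)$--$(ii)$ into the asserted large-deviation lower bound.

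For $(i)$: the profile $\rho_{0,\epsilon}$ is bounded away from $0$ and $1$ and positive on both sides of the origin (note $0<A_\epsilon=A(1-\epsilon)<\gamma(1-\rho)$ forces $1-A_\epsilon/\gamma>\rho$), so by the hydrodynamic limit together with \eqref{ODE-reza}, under $\Q_N$ the scaled position $X_N/N$ converges in probability to $v_1$, where $\dot v_t=\gamma\{1-\rho(t,v_t)\}$, $v_0=0$, and $\rho(\cdot,\cdot)$ is the entropic solution of \eqref{Burgers} with datum $\rho_{0,\epsilon}$. I would then write this solution down explicitly: the upward jump at $u=0$, from $\rho$ to $\rho_1:=1-A_\epsilon/\gamma$, is an admissible (Lax) shock moving at speed $A_\epsilon-\gamma\rho$; the downward jump at $u=\gamma$, from $\rho_1$ to $\rho$, opens a rarefaction fan whose trailing edge moves at speed $2A_\epsilon-\gamma$. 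A one-line computation, using again $0<A_\epsilon<\gamma(1-\rho)$, shows these two waves first meet only at time $\gamma/(\gamma(1-\rho)-A_\epsilon)>1$; hence for every $t\in[0,1]$ the plateau of value $\rho_1$ occupies the interval $\big((A_\epsilon-\gamma\rho)t,\ \gamma+(2A_\epsilon-\gamma)t\big)$, strictly inside which the candidate trajectory $v_t=A_\epsilon t$ lies (again because $A_\epsilon>0$). There $\dot v_t=\gamma(1-\rho_1)=A_\epsilon$, confirming $v_t=A_\epsilon t$ is the solution, so $v_1=A_\epsilon<A$ and $F_A$ is $\Q_N$-typical.

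For $(ii)$: since the two measures are Bernoulli products conditioned only at the single site $0$, the relative entropy equals $\sum_{x\neq 0}h_\rho\big(\rho_{0,\epsilon}(x/N)\big)$, where $h_\rho(a)=a\log(a/\rho)+(1-a)\log((1-a)/(1-\rho))$; as $\rho_{0,\epsilon}\equiv\rho$ off $[0,\gamma)$ and $\equiv\rho_1$ on it, a Riemann-sum passage gives $\tfrac1N H\to K(\rho_{0,\epsilon})=\gamma\,h_\rho(\rho_1)$, which tends to $\gamma\,h_\rho(1-A/\gamma)=\mathcal J_1(A)$ as $\epsilon\downarrow0$ --- an equality, hence a fortiori the asserted inequality. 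For $(iii)$, applying Jensen's inequality to $-\log$ against $\Q_N(\cdot\,|\,F_A)$ yields $\tfrac1N\log\P_\rho(F_A)\ge\tfrac1N\log\Q_N(F_A)+\tfrac1{N\,\Q_N(F_A)}E_{\Q_N}\big[\one_{F_A}\log(d\P_\rho/d\Q_N)\big]$; since $\log(d\P_\rho/d\Q_N)$ is a function of $\eta_0$ bounded deterministically by $C(\epsilon)N$ and $\Q_N(F_A)\to1$, the indicator $\one_{F_A}$ may be dropped at $o(N)$ cost, and letting $N\to\infty$ then $\epsilon\downarrow0$ gives $\liminf_N\tfrac1N\log\P_\rho(F_A)\ge-\mathcal J_1(A)$.

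\textbf{Main obstacle.} The only step beyond routine tilting bookkeeping is $(i)$: solving \eqref{Burgers} explicitly for this two-discontinuity initial datum and verifying that the shock emanating from $u=0$ and the rarefaction from $u=\gamma$ stay separated over the full time horizon $[0,1]$, so that the tagged trajectory $v_t=A_\epsilon t$ never leaves the constant-density plateau where $\dot v_t=A_\epsilon$. It is worth noting that this entropic strategy is genuinely wasteful compared with a nonentropic ``blocking'' construction analogous to Theorem \ref{thm:TASEP nonentropy}; this is why the conclusion is stated with $\mathcal J_1(A)$ and an inequality rather than with the conjecturally sharp rate $I_\gamma(A)$.
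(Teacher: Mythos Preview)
Your proof is correct and follows essentially the same entropy change-of-measure route as the paper: modify only the initial distribution, compute the relative-entropy cost as a Riemann sum converging to $\gamma\,h_\rho(1-A/\gamma)=\mathcal J_1(A)$, verify via the entropic Burgers solution that the tagged particle converges to $A_\epsilon<A$ under $\Q_N$, and conclude by Jensen's inequality. The only cosmetic difference is that you invoke the ODE form \eqref{ODE-reza} of the tagged-particle law of large numbers and check that $v_t=A_\epsilon t$ stays inside the constant-density plateau, whereas the paper uses the equivalent mass-conservation formulation (Proposition~\ref{BD_LLN_prop}, equation \eqref{LLN soln for v}) and refers back to Step~6 of Section~\ref{LB A>1 sect} for the Jensen step.
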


In ASEP, when $A<0$, we also give a lower bound large deviation cost, likely not sharp.

\begin{prop}[ASEP: $A<0$]
\label{Prop:ASEP A<0}
Consider ASEP with $0<\gamma<1$.  For $\epsilon>0$,
let
 \begin{align}
 \label{u-A<0}
 \rho_{0,\epsilon}(u) = \left\{\begin{array}{ll}
\rho& {\rm for \ }u<A-\gamma-\epsilon\\
0&{\rm for \ }A-\gamma-\epsilon\leq u< 0\\
\rho& {\rm for \ }u \geq 0.
\end{array}\right.
\end{align}
Then, under the modified process measure, with $A_{\epsilon} = A -\epsilon/2$,
$$d\Q_N = \frac{d\nu^{N}_{\rho_{0,\epsilon}(\cdot)}(\cdot|\eta(0)=1)}{d\nu_\rho(\cdot|\eta(0)=1)}  \frac{d\B_{p_{A_{\epsilon}}, q_{A_{\epsilon}}}}{d\B_{p,q}} d\P_\rho$$
the event $F_A$ is typical, $\lim_{N\uparrow\infty} \Q_N(F_A)=1$, and
the entropy cost
\begin{align*}
&\lim_{\epsilon\downarrow 0}\liminf_{N\uparrow\infty}\frac{1}{N}E_{\Q_N}\left[ \log\frac{d\P_\rho}{d\Q_N}\right]\\
&\ \ \ \ \ \ \ \ \ \ \  \geq -(A-\gamma)\log(1-\rho) -A\log c + pc + q/c -1 := -\mathcal{J}_2(A),
\end{align*}
from which we deduce that
$$\liminf_{N\rightarrow\infty} \frac{1}{N}\log \P_\rho(F_A) \geq -\mathcal{J}_2(A).
 $$
\end{prop}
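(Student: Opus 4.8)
The plan is to follow the same change-of-measure/entropy-inequality scheme used for the upper-tail theorems, but adapted to the left-moving event $F_A$ with $A<0$. The strategy is encoded in the profile \eqref{u-A<0}: start from a local equilibrium where a window of length $\gamma-A+\epsilon$ immediately to the left of the origin is emptied (density $0$), with density $\rho$ elsewhere, \emph{and} slow (in fact, here reverse the sign of the drift of) the tagged particle by switching its jump rates from $(p,q)$ to $(p_{A_\epsilon},q_{A_\epsilon})$ with $A_\epsilon=A-\epsilon/2$, where $p_{A_\epsilon}=p c$, $q_{A_\epsilon}=q/c$ and $c=(A_\epsilon+\sqrt{A_\epsilon^2+4pq})/(2p)$; note this choice is exactly the one making the single-particle drift $p_{A_\epsilon}-q_{A_\epsilon}=A_\epsilon$. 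The three things to verify are: (i) under $\Q_N$ the event $F_A$ is typical; (ii) the relative-entropy cost of $\Q_N$ against $\P_\rho$ converges to $\mathcal{J}_2(A)$; (iii) the standard entropy inequality $\P_\rho(F_A)\ge e^{-(E_{\Q_N}[\log(d\Q_N/d\P_\rho)]+1)/\Q_N(F_A)}\,\Q_N(F_A)$ then yields the stated liminf bound.

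For (i), I would argue that, with an empty corridor of macroscopic length $\gamma-A$ to the left of the tagged particle and density $\rho$ outside, the tagged particle's own clock — now a biased walk with negative drift $A_\epsilon<0$ — carries it leftward at macroscopic speed $A_\epsilon$ for a time until it reaches the left boundary of the empty region; since the corridor has length $\gamma-A>|A|$ and the relevant time horizon is $t=1$, the particle stays inside the emptied region and $X_N/N\to A_\epsilon<A$, so $\Q_N(F_A)\to 1$. More carefully one invokes the LLN-type / particle arguments promised in Section \ref{LLN section} for profiles vanishing on one side of the tagged particle, together with the fact that the bulk hydrodynamics to the right is an entropic rarefaction/shock that does not interfere with the tagged particle on the $t\le 1$ time scale. (The precise bookkeeping is why one takes $A_\epsilon=A-\epsilon/2$ strictly below $A$, giving the needed room.)

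For (ii), the entropy splits additively into the static cost of replacing $\nu_\rho(\cdot|\eta(0)=1)$ by $\nu^N_{\rho_{0,\epsilon}(\cdot)}(\cdot|\eta(0)=1)$ and the dynamic cost of replacing $\B_{p,q}$ by $\B_{p_{A_\epsilon},q_{A_\epsilon}}$ along the tagged particle's trajectory. The static part is $N^{-1}$ times the Kullback–Leibler divergence of the two product measures, which converges to $\int_\R\{\rho_{0,\epsilon}\log(\rho_{0,\epsilon}/\rho)+(1-\rho_{0,\epsilon})\log((1-\rho_{0,\epsilon})/(1-\rho))\}\,du$; on the emptied window of length $\gamma-A+\epsilon$ this contributes $(\gamma-A+\epsilon)\cdot(-\log(1-\rho))=-(A-\gamma-\epsilon)\log(1-\rho)$, and everything else vanishes. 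The dynamic part is computed by Girsanov/Radon–Nikodym for jump processes: the log-likelihood ratio is a sum over jump times of $\log(p_{A_\epsilon}/p)$ or $\log(q_{A_\epsilon}/q)$ minus the compensator $\int_0^N\{(p_{A_\epsilon}-p)(1-\eta_s(X_s+1))+(q_{A_\epsilon}-q)(1-\eta_s(X_s-1))\}\,ds$; taking $\E_{\Q_N}$, dividing by $N$, and using that under $\Q_N$ the tagged particle sits in empty space (so $(1-\eta(X\pm1))\equiv 1$) while it makes $\sim N(p_{A_\epsilon}+q_{A_\epsilon})$ jumps of which a fraction $p_{A_\epsilon}/(p_{A_\epsilon}+q_{A_\epsilon})$ go right, the per-site dynamic cost converges to $p_{A_\epsilon}\log(p_{A_\epsilon}/p)+q_{A_\epsilon}\log(q_{A_\epsilon}/q)-(p_{A_\epsilon}-p)-(q_{A_\epsilon}-q)=pc\log c-(q/c)\log c-(pc-p)-(q/c-q)$. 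Writing $c$ for the shortcut and collecting, the total is $-(A-\gamma)\log(1-\rho)+(pc-q/c)\log c-pc-q/c+p+q = -(A-\gamma)\log(1-\rho)-A\log c+pc+q/c-1$ after using $pc-q/c=A$ (in the $\epsilon\downarrow0$ limit) and $p+q=1$; this is exactly $\mathcal{J}_2(A)$, so $\lim_{\epsilon\downarrow0}\liminf_N N^{-1}E_{\Q_N}[\log(d\P_\rho/d\Q_N)]\ge-\mathcal{J}_2(A)$ as claimed.

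The main obstacle is step (i) together with the exchange of limits in step (ii): one must show that the tagged particle genuinely moves at the prescribed speed inside the artificially emptied region and, crucially, that it (with probability tending to one, and also in the $L^1(\Q_N)$ sense needed for the entropy computation) \emph{remains} in empty space throughout $[0,N]$ — otherwise the compensator term picks up contributions from $(1-\eta(X\pm1))<1$ and the dynamic cost is smaller than computed, breaking the identification with $\mathcal{J}_2(A)$. This requires controlling how fast the density-$\rho$ bulk to the left of the empty window can invade it under the asymmetric dynamics; since the bulk has a rightward drift and the tagged particle is being driven left, the two fronts approach each other, so one needs the corridor length $\gamma-A-\epsilon$ (after subtracting slack) to strictly exceed the combined macroscopic displacement $|A_\epsilon|+\gamma\rho$ or a similar quantity over time $1$ — the $-\gamma$ shift and the $\epsilon$ buffer in \eqref{u-A<0} are precisely there to guarantee this, and verifying it rigorously (via a coupling with an independent-particle or a second-class-particle comparison, as developed in the preliminary sections) is the technical heart of the proof. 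Once the tagged particle is confined to vacuum, the rest is the routine entropy-inequality argument already used for Theorems \ref{thm:ASEP A large} and \ref{thm:ASEP A not so large}.
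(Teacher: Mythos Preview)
Your proposal is correct and follows essentially the same route as the paper: empty a left corridor of length $\gamma-A+\epsilon$, tilt the tagged particle's rates to $(p_{A_\epsilon},q_{A_\epsilon})$, show via coupling that the left neighbor stays macroscopically away so the tagged particle drifts freely to $A_\epsilon<A$, and finish with the entropy inequality exactly as in Section~\ref{LB A>1 sect}. Two small corrections worth noting: the invading front from the left moves at the lead-particle speed $\gamma$ (Proposition~\ref{BD_LLN_prop1}, Part~1), not $\gamma\rho$, and only the \emph{left} site needs to be vacant for the entropy computation---since $c<1$ here, the term $p(1-\eta(X+1))(c-1)$ has the favorable sign regardless of right-side occupation, so your worry about $(1-\eta(X+1))<1$ spoiling the dynamic cost is unfounded (also, watch the sign: the static cost is $(A-\gamma-\epsilon)\log(1-\rho)$, which is positive as both factors are negative).
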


Finally, we comment on the case $A=0$ in ASEP when $0<\gamma< 1$, corresponding to when the tagged particle does not move macroscopically at time $t=1$.  
From Proposition \ref{Prop:ASEP A small}, when $A\downarrow0$, we recover a lower bound cost of $-\gamma\log(\rho)$.  However, from Proposition \ref{Prop:ASEP A<0}, as $A\uparrow 0$, we find a lower bound cost of
$-\left[\gamma\log(1-\rho) - 2\sqrt{pq}+1\right]$, noting that $c=\sqrt{q/p}$ when $A=0$.
Indeed, we observe that if $\rho=1$, there would be no movement, and so the cost of staying put vanishes, in accord with the strategy when $A\downarrow 0$ (Proposition \ref{Prop:ASEP A small}).  However, in the dilute setting $\rho\sim 0$, the birth-death clock change (Proposition \ref{Prop:ASEP A<0}) seems to be a better strategy.

\begin{cor}[$A=0$]
\label{Prop:ASEP A=0}
Consider ASEP with $0<\gamma<1$.  Then,
$$\lim_{A\downarrow 0}\liminf_{N\uparrow\infty} \frac{1}{N}\log \P_\rho(F_A)
\geq \max\left\{\gamma\log(\rho), \gamma\log(1-\rho) + 2\sqrt{pq} -1\right\}.$$
\end{cor}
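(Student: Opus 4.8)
The plan is to deduce the statement directly from Propositions \ref{Prop:ASEP A small} and \ref{Prop:ASEP A<0}, by sending the destination parameter to $0$ from within each of the two regimes and using the monotonicity of the tail events $F_A$ in $A$ to connect the one-sided limit from $A>0$ with the estimate that is actually proved for $A<0$.

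First I would set $g(A)=\liminf_{N\uparrow\infty}\tfrac1N\log\P_\rho(F_A)$ and observe that, since $F_A=\{X_N/N\le A\}$ is nondecreasing in $A$, the probability $\P_\rho(F_A)$ is nondecreasing in $A$ for each fixed $N$, so $g$ is a nondecreasing function of $A$. In particular $\lim_{A\downarrow 0}g(A)=\inf_{A>0}g(A)$ exists in $[-\infty,\infty)$, so the limit appearing in the corollary is well defined, and it suffices to bound it below by each of the two quantities inside the maximum. For the first one, Proposition \ref{Prop:ASEP A small} gives $g(A)\ge -\mathcal{J}_1(A)$ for every $A\in(0,\gamma(1-\rho))$, where $\mathcal{J}_1(A)=\gamma\big[(1-\tfrac A\gamma)\log\tfrac{1-A/\gamma}{\rho}+\tfrac A\gamma\log\tfrac{A}{\gamma(1-\rho)}\big]$. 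Since $x\log x\to 0$ as $x\downarrow 0$, the second bracketed term vanishes as $A\downarrow 0$ while the first converges to $\log(1/\rho)$, so $\mathcal{J}_1(A)\to -\gamma\log\rho$; letting $A\downarrow 0$ along a sequence in the inequality $g(A)\ge -\mathcal{J}_1(A)$, and using that $\lim_{A\downarrow 0}g(A)$ exists, gives $\lim_{A\downarrow 0}g(A)\ge \gamma\log\rho$.

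For the second quantity I would fix an arbitrary $A>0$ and, for each $A'<0$, combine monotonicity of $g$ with Proposition \ref{Prop:ASEP A<0} to get $g(A)\ge g(A')\ge -\mathcal{J}_2(A')$, where $\mathcal{J}_2(A')=(A'-\gamma)\log(1-\rho)+A'\log c(A')-p\,c(A')-q/c(A')+1$ and $c(A')=(A'+\sqrt{A'^2+4pq})/(2p)$. The function $A'\mapsto \mathcal{J}_2(A')$ is continuous at $0$, and at $A'=0$ one has $c(0)=\sqrt{q/p}$, so $p\,c(0)=q/c(0)=\sqrt{pq}$ and $\mathcal{J}_2(0)=-\gamma\log(1-\rho)-2\sqrt{pq}+1$. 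Letting $A'\uparrow 0$ in $g(A)\ge -\mathcal{J}_2(A')$ therefore yields $g(A)\ge \gamma\log(1-\rho)+2\sqrt{pq}-1$, and since $A>0$ was arbitrary, $\lim_{A\downarrow 0}g(A)=\inf_{A>0}g(A)\ge \gamma\log(1-\rho)+2\sqrt{pq}-1$. Taking the larger of the two lower bounds finishes the argument. There is no substantive obstacle here, since all the probabilistic content already sits in the two cited propositions; the only points needing a little care are the two elementary limit evaluations of $\mathcal{J}_1$ and $\mathcal{J}_2$ and the use of event-monotonicity to carry the $A<0$ estimate across to $A>0$.
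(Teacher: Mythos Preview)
Your proof is correct and follows the same route the paper takes: the corollary is presented there as an immediate consequence of Propositions \ref{Prop:ASEP A small} and \ref{Prop:ASEP A<0}, obtained by sending $A\downarrow 0$ in $-\mathcal{J}_1(A)$ and $A\uparrow 0$ in $-\mathcal{J}_2(A)$. The one thing you make explicit that the paper leaves tacit is the monotonicity of $A\mapsto \P_\rho(F_A)$, which is what lets you carry the $A<0$ bound from Proposition \ref{Prop:ASEP A<0} across to the one-sided limit $A\downarrow 0$; this is a genuine detail worth recording, and your handling of it is clean.
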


\subsection{Upper bound minimizations}
\label{results: ub section}
We now discuss that the lower bound strategies in ASEP, when $A>\gamma(1-\rho)$, may be viewed in terms of minimizations of the rate $K$ over a class of initial distributions.  In this way, we may identify the rate function $I_\gamma$ in this regime.

For $A\in \R$, let
\begin{equation}
\label{I^Z eqn}
\I^Z(A)=
A\log(c) -pc-q/c +1 
\end{equation}
where $c=\left(A + \sqrt{A^2 + 4pq}\right)/2p$.  Notice that $\I^Z(A)=0$ exactly when $A=\gamma$.  The function $\I^Z$ is the rate function of a birth-death process $Z$ at time $t=1$ on $\Z$ with birth and death rates $p$ and $q$ respectively, found by changing the rates $p$ and $q$ to $p'=pc$ and $q'=q/c$.  When $\gamma = p = 1$, $I^Z$ reduces to the rate function of a standard Poisson process.

In the following upperbounds, $K_1(v_1)$ and $K_2(v_2)$ are minimum values in the optimizations \ref{problem1} and \ref{problem2} discussed in the sequel.

\begin{thm}[$A\geq\gamma$]
\label{thm:UB A large}
Consider ASEP with $0<\gamma\leq 1$.  We have
\begin{eqnarray*}
&\limsup_{N\uparrow\infty} \frac{1}{N}\log \P_\rho\left(E_A\right)\leq -\I^Z(A) - K_1(v_1) = -I_\gamma(A).
\end{eqnarray*}
\end{thm}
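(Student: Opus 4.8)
The plan is to combine a two-step change of measure with the superexponential height estimate of Section~\ref{section superexponential}, so as to reduce $\limsup_{N}\frac1N\log\P_\rho(E_A)$ to a calculus of variations problem, which is then evaluated as $-\I^Z(A)-K_1(v_1)=-I_\gamma(A)$ through optimization~\ref{problem1} and Proposition~\ref{Prop:UB} of Section~\ref{calc var section}.

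First I would peel off the initial data. Fix $\delta>0$; since the mass of $\pi^N_0$ outside a large macroscopic window is superexponentially negligible, it suffices to consider macroscopic profiles $\phi:\R\to[0,1]$ that agree with $\rho$ off a fixed compact set, and one covers this (compact) family, in the distance \eqref{distance}, by finitely many $\delta$-balls $\mathcal N_\delta(\phi_k)$. A direct Bernoulli estimate gives $\nu_\rho\big(\pi^N_0\in\mathcal N_\delta(\phi_k)\big)\le e^{-N(K(\phi_k)-\omega(\delta))}$ with $\omega(\delta)\downarrow0$, so
$$\P_\rho(E_A)\ \le\ e^{N\omega(\delta)}\sum_k e^{-NK(\phi_k)}\ \sup\big\{\P_{\eta_0}(E_A):\pi^N_0(\eta_0)\in\mathcal N_\delta(\phi_k)\big\},$$
and it remains to bound $\P_{\eta_0}(E_A)$ for a \emph{deterministic} configuration $\eta_0$ whose empirical profile is near a given $\phi$. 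For that I tilt only the jump rates of the tagged particle, from $(p,q)$ to $(p_a,q_a)=(pc_a,q/c_a)$ with $c_a=(a+\sqrt{a^2+4pq})/(2p)$, for a speed $a\ge\gamma$ to be chosen (an exponential tilt of the single birth--death clock being the cheapest way to change its drift): with $\B_{p,q}$, $\B_{p_a,q_a}$ the corresponding joint laws from $(\eta_0,0)$, and writing $R_N,L_N$ for the counts of effected right/left tagged jumps up to microscopic time $N$, $X_N=R_N-L_N$, and $T^\pm_N=\int_0^N\one(\eta_s(X_s\pm1)=0)\,ds$,
$$\P_{\eta_0}(E_A)=E_{\B_{p_a,q_a}}\big[\exp\{-(\log c_a)X_N+(p_a-p)T^+_N+(q_a-q)T^-_N\}\,\one_{E_A}\big].$$

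Next I would freeze the bulk. A single rate change does not affect the hydrodynamics, so under $\B_{p_a,q_a}$ (and under $\B_{p,q}$) the scaled microscopic height of $\eta_{N\cdot}$ lies, off a superexponentially small event $G_N^c$, within $\delta$ of the height of the entropic solution $\rho_\phi$ of \eqref{Burgers} issued from $\phi$. On $G_N$ the occupation just ahead of the tagged particle has time-averaged density $\rho_\phi(t,X_{Nt}/N)+o_\delta(1)$, by a local-equilibrium replacement as in \cite{Reza_LLN}; together with the Filippov description \eqref{ODE-reza} of the tagged particle at drift $a$ driven by the entropic bulk, this makes $E_A\cap G_N$ essentially empty unless the solution $v_\cdot$ of $\dot v=a(1-\rho_\phi(t,v_t))$ satisfies $v_1\ge A$, and otherwise pins $T^+_N,T^-_N=N\tau_\phi(a)+o_\delta(N)$ and $X_N=a\tau_\phi(a)N+o_\delta(N)$, where $\tau_\phi(a)=\int_0^1(1-\rho_\phi(t,v_t))\,dt$. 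Substituting into the Radon--Nikodym derivative and using the identity $p_a\log\frac{p_a}p-(p_a-p)+q_a\log\frac{q_a}q-(q_a-q)=\I^Z(a)$ from \eqref{I^Z eqn}, one gets on $E_A\cap G_N$ the bound $e^{-N(\tau_\phi(a)\I^Z(a)-o_\delta(1))}$, while $E_{\B_{p_a,q_a}}[\,\cdot\,\one_{G_N^c}]=\B_{p,q}(G_N^c)$ is superexponentially small. Optimizing over $a$, taking the supremum over $\eta_0$ and the (finite) sum over $k$, and letting $\delta\downarrow0$,
$$\limsup_{N\uparrow\infty}\tfrac1N\log\P_\rho(E_A)\ \le\ -\inf\big\{K(\phi)+\tau_\phi(a)\,\I^Z(a):a\ge\gamma,\ a\,\tau_\phi(a)\ge A\big\}.$$

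Finally I would evaluate this infimum. The claim, established in Section~\ref{calc var section}, is that it is attained at $a=A$ with $\tau_\phi(A)=1$ --- the tagged particle riding a macroscopically empty corridor throughout $[0,1]$ --- whereupon the bulk part reduces to minimizing $K(\phi)$ over profiles whose entropic evolution maintains zero density along $v_\cdot$ while delivering $v_1=A$; this is precisely optimization~\ref{problem1}, of value $K_1(v_1)$ at $v_1=A$ (Proposition~\ref{Prop:UB}). An explicit minimizer is the profile \eqref{u-A-large} of Theorem~\ref{thm:ASEP A large} as $\epsilon\downarrow0$, whose $K$-cost equals $-A\log(1-\rho)-\gamma\rho$; adding $\I^Z(A)$ recovers $I_\gamma(A)$ of \eqref{I_gamma}. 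The main obstacle is the freezing step: because, \emph{given} the bulk height trajectory, the position of the tagged particle is only exponentially (not superexponentially) concentrated, one cannot simply condition on typical tagged behavior --- it is essential to partition by the initial profile first and then tilt only the single tagged clock, so that $G_N^c$ is genuinely superexponentially negligible while the unavoidable clock fluctuations are paid for by the tilt. Making this quantitative on the event $E_A$, and proving in the calculus of variations that the decoupled ``empty corridor plus pure clock tilt to drift $A$'' cannot be improved upon, is where the real work lies.
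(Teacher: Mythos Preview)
Your proposal has a genuine gap at the ``freezing'' step, and the paper's route is quite different in a way that sidesteps exactly the obstacle you flag at the end.

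The problem is your control of $T^\pm_N$ on the event $E_A$. You write that on $G_N$ (bulk near the entropic profile) local-equilibrium replacement and the Filippov ODE ``pin'' $T^\pm_N=N\tau_\phi(a)+o_\delta(N)$. But this is a law-of-large-numbers statement valid on the \emph{typical} set under $\B_{p_a,q_a}$; on $E_A$, which under $\B_{p_a,q_a}$ is still only exponentially (not superexponentially) small unless you have already chosen $a$ so that $a\tau_\phi(a)=A$, you have no a~priori control of the occupation times $T^\pm_N$. Concretely, the Radon--Nikodym derivative contains $(p_a-p)T^+_N+(q_a-q)T^-_N$ with $T^+_N$ and $T^-_N$ entering separately, and on $E_A$ the pair $(T^+_N,T^-_N)$ can vary over a macroscopic range; bounding $T^+_N\le N$ and $T^-_N\ge 0$ gives only $-A\log c_a+p(c_a-1)$, which is not $\I^Z(A)$. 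You need either a joint LDP for $(X_N/N,T^+_N/N,T^-_N/N)$ given the bulk (not available), or a way to make $E_A$ typical under the tilt simultaneously with controlling the RN derivative---this is circular because the choice of $a$ that makes $E_A$ typical depends on $\tau_\phi(a)$, which is defined via the typical path. Your final optimization $\inf\{K(\phi)+\tau_\phi(a)\I^Z(a):a\tau_\phi(a)\ge A\}$ is also not what is solved in Section~\ref{calc var section}; Optimization~\ref{problem1} is a constraint on $\int_0^u\phi$ against $G(u-A)$, coming from Hopf--Lax, not a coupled path problem.

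The paper decouples the two costs without tracking the tagged trajectory at all. It uses the current--tagged relation \eqref{current-tagged} to rewrite $E_A\subset\{\sum_{y<AN}\eta_N(y)\le\sum_{y<0}\eta_0(y)\}$, then applies the superexponential height bound (Lemma~\ref{superexponential cost lemma}) to force $v_\beta(1,A)\le v_\beta(0,0)$, which via Hopf--Lax is exactly the constraint in Optimization~\ref{problem1} on the \emph{initial} profile alone. The $\I^Z(A)$ term is obtained separately by the monotone coupling $X_N\le Z_N$ with the lead particle from the step profile $\eta_{step}$ (inequality \eqref{XlessZ}) and Proposition~\ref{BD_LDP_prop}; this bound is uniform in $\eta_0$, so the dynamical cost and the initial-condition cost factor cleanly. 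No tilt of the tagged clock, no control of $T^\pm_N$, and no analysis of the tagged path on $E_A$ is needed.
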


\begin{thm}[$\gamma(1-\rho)<A<\gamma$]
\label{thm:UB A not so large}
Consider ASEP with $0<\gamma\leq 1$.  We have
\begin{eqnarray*}
&\limsup_{N\uparrow\infty} \frac{1}{N} \log\P_\rho\left(E_A\right)\leq - K_2(v_2) = -I_\gamma(A).
\end{eqnarray*}
\end{thm}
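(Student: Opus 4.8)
The plan is to establish the upper bound $\limsup_{N}\frac1N\log\P_\rho(E_A)\le -K_2(v_2)$ by a change-of-measure / entropy argument complementary to the lower bound in Theorem \ref{thm:ASEP A not so large}, and then to verify algebraically that the calculus-of-variations value $K_2(v_2)$ from Optimization \ref{problem2} equals $I_\gamma(A)=A\log(A/(\gamma(1-\rho)))-A+\gamma(1-\rho)$ in the range $\gamma(1-\rho)<A<\gamma$. Since $\P_\rho(E_A)=\P_\rho(X_N\ge AN)$ and, as recalled in Section \ref{connections section}, the event $\{X_N\ge AN\}$ forces the bulk mass to have vacated the interval to the left of $AN$ in the appropriate sense, the strategy is to disintegrate the event according to the macroscopic initial profile $\rho_0$ seen by the process and the macroscopic trajectory $\pi^N_\cdot$. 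First I would reduce, using exponential tightness of $\{\pi^N_\cdot\}$ in $D([0,1];M(\R))$ and a covering argument, to controlling the cost of individual `nice' profiles $\rho(\cdot,\cdot)$ in the sense defined before \eqref{Tasep_rate_function}, together with the cost $K(\rho_0)$ of producing the initial condition. The key point, generalizing the TASEP superexponential estimate advertised in Sections \ref{superexponential subsection} and \ref{section superexponential} to ASEP, is that the scaled microscopic height function cannot fall strictly below the entropic-solution height function starting from $\rho_0$ without paying superexponentially; hence on the event $E_A$ the relevant height function is forced down, and the only way to realize this at cost $O(N)$ is to pay $K(\rho_0)$ to lower the initial density near the origin. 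This converts the probability bound into the deterministic variational problem: minimize $K(\rho_0)$ over initial profiles $\rho_0$ whose associated entropic evolution carries the tagged particle (via the ODE \eqref{ODE-reza}, in the Filippov sense) to level at least $A$ at time $1$.

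Next I would solve that variational problem, which is Optimization \ref{problem2}. Because changing the tagged particle's own rates is not used in this regime (the velocity $A<\gamma$ is attainable purely by thinning the bulk), the minimization is over $\rho_0$ alone. I expect the optimizer to be spatially constant equal to some value $r=1-A/\gamma$ on an initial interval $[0,L]$ in front of the tagged particle and equal to $\rho$ elsewhere, with $L$ chosen so that the rarefaction/constant-density region persists exactly until macroscopic time $1$ — this is precisely the structure of the profile \eqref{u-A-not so large} used in Theorem \ref{thm:ASEP A not so large} with $\epsilon\downarrow0$. Plugging $\rho_0\equiv r=1-A/\gamma$ on a length-$L$ interval into $K(\rho_0)=\int_\R\{\rho_0\log(\rho_0/\rho)+(1-\rho_0)\log((1-\rho_0)/(1-\rho))\}\,du$, and determining $L$ from the requirement that the front of the modified region reach the tagged particle's terminal position (so that the travelled distance is $A$), should give $K_2(v_2)=A\log(A/(\gamma(1-\rho)))-A+\gamma(1-\rho)$ after simplification; matching this with the lower bound $-I_\gamma(A)$ already proved closes the theorem. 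I would carry out the Lagrange/Euler–Lagrange computation showing no nonconstant profile does better (convexity of $h$ makes the constant profile optimal on the support of the perturbation, and the travel-distance constraint fixes the trade-off between $r$ and $L$).

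The main obstacle I anticipate is the ASEP version of the superexponential height-function lower bound and, relatedly, making the Filippov-sense tagged-particle ODE argument robust enough to handle the possibly discontinuous, step-type initial data of the competing profiles. In TASEP this is available through the height-function / last-passage picture and the results cited from \cite{Seppalainen}, \cite{Jensen_thesis}, \cite{Varadhan_Jensen}; in ASEP one must instead run a coupling and an attractivity argument (the process is attractive, and $\nu_\rho(\cdot\mid\eta(0)=1)$ is stationary in the tagged frame) to show that, on $E_A$, the empirical height is with superexponentially high probability not below the entropic profile's height, uniformly on $[0,1]\times\R$. Once that estimate is in hand, the reduction to the deterministic optimization is routine, and the remaining work is the convex-analysis computation identifying $K_2(v_2)=I_\gamma(A)$, which I expect to be a short but slightly delicate calculation because of the logarithmic terms.
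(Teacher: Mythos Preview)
Your overall architecture is correct and matches the paper: reduce via the superexponential height/flux bound (which the paper does prove for ASEP in Section~\ref{section superexponential}) to a deterministic minimization of $K(\rho_0)$ over admissible initial profiles, then identify the minimum as $I_\gamma(A)$. Two points, one minor and one a genuine gap.

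\medskip
\textbf{Minor.} You do not need exponential tightness of $\pi^N_\cdot$ in $D([0,1];M(\R))$, nor the Filippov ODE. The paper works directly from the current--tagged relation \eqref{current-tagged}, which on $E_A$ forces $\frac1N\sum_{y<AN}\eta_N(y)\le \frac1N\sum_{y<0}\eta_0(y)$; combined with the one-time flux estimate of Lemma~\ref{superexponential cost lemma} this gives the constraint $v_\beta(1,A)\le v_\beta(0,0)$ on the entropic height, and a compactness/covering argument over the \emph{initial} measure $\pi^N_0$ alone (Steps~1--3 of Section~\ref{UB subsection 1}) closes the probabilistic part.

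\medskip
\textbf{Gap.} Your description of the variational problem and its minimizer is incorrect, and this is where the proof would fail. Via Hopf--Lax (see \eqref{Hopf-Lax formula}), the constraint $v_\beta(1,A)\le v_\beta(0,0)$ is equivalent to the \emph{obstacle condition}
\[
\int_0^y \rho_0(u)\,du \;\le\; G(y-A)\qquad\text{for every }y\in[0,A+\gamma],
\]
which is exactly Optimization~\ref{problem2}. This is not a single ``travel-distance'' equality fixing a trade-off between a level $r$ and a length $L$; it is an infinite family of inequalities. Your proposed minimizer---$\rho_0\equiv r=1-A/\gamma$ on $[0,L]$ and $\rho_0\equiv\rho$ elsewhere---is either infeasible or strictly suboptimal: the line $y\mapsto ry$ is the tangent to $G(\cdot-A)$ through the origin, touching at $y_{\tan}=\gamma-A$, so switching to slope $\rho>r$ at any $L\le z_\rho:=A-\gamma+2\rho\gamma$ violates the obstacle, while taking $L\ge z_\rho$ is admissible but gives cost $L\,h(r)>I_\gamma(A)$ (e.g.\ $\gamma=1$, $\rho=\tfrac12$, $A=\tfrac34$ yields $\approx 0.098$ versus $I_\gamma(A)\approx 0.054$). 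Note also that the lower-bound profile \eqref{u-A-not so large} is \emph{not} piecewise constant: it has a linear ramp $\rho_0(u)=\tfrac{1}{2\gamma}(\gamma-A+u)$ on $[\gamma-A,\,z_\rho]$, which is precisely $G'(u-A)$. The actual minimizer (Proposition~\ref{Prop:UB}, proved in Section~\ref{calc var section} via a regularized Euler--Lagrange argument) follows the tangent line on $[0,\gamma-A]$, then \emph{saturates the obstacle} on $[\gamma-A,z_\rho]$, then has slope $\rho$ beyond---this obstacle-contact structure is what your convexity-only heuristic misses.
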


To describe the optimization problems, let 
\begin{align}
\label{G-def-intro}
G(z) = \left\{\begin{array}{ll}
\frac{\gamma}{4}\left(1+\frac{z}{\gamma}\right)^2 & {\rm  for \ }|z|\leq \gamma\\
0&  {\rm for \ } z<-\gamma\\
z&  {\rm for \ }z>\gamma.
\end{array}
\right.
\end{align}
The function $G$ is the convex conjugate of $L$ defined by
$L(\rho) = -\gamma\rho(1-\rho)$ for $0\leq\rho\leq 1$, and $L(\rho)=\infty$ otherwise (cf. Appendix \ref{Hopf Lax section}).
The following optimization problems correspond to when $A\geq \gamma$ and $\gamma(1-\rho)<A<\gamma$ respectively:
\begin{optimization}[$A\geq \gamma$]
\label{problem1}
Minimize the cost
$$K_1(v):=\int_{A-\gamma}^{A+\gamma} v'(u)\log \frac{v'(u)}{\rho} + \left(1-v'(u)\right)\log \frac{1-v'(u)}{1-\rho} du$$
over differentiable functions $v:[A-\gamma, A+\gamma] \rightarrow [0,\infty)$ such that $0\leq v'(u)\leq 1$, $v(A-\gamma)=0$ and $v(u)\leq G(u-A)$ for $u\in [A-\gamma, A+\gamma]$.
\end{optimization}

\begin{optimization}[$\gamma(1-\rho)<A<\gamma$]
\label{problem2}
Minimize the cost
$$K_2(v):=\int_{0}^{A+\gamma} v'(u)\log \frac{v'(u)}{\rho} + \left(1-v'(u)\right)\log \frac{1-v'(u)}{1-\rho} du$$
over differentiable functions $v:[0, A+\gamma] \rightarrow [0,\infty)$ such that $0\leq v'(u)\leq 1$, $v(0)=0$ and $v(u)\leq G(u-A)$ for $u\in [0, A+\gamma]$.
\end{optimization}

We now solve these optimization problems in Proposition \ref{Prop:UB} whose proof, in the context of a more general optimization problem, is given in Section \ref{calc var section}.

\begin{prop}
\label{Prop:UB}
In Problem \ref{problem1}, the unique minimizer is given by
$$v_1(u) = \left\{\begin{array}{ll}
G(u-A) &  {\rm for \ } A-\gamma\leq u\leq \gamma(2\rho-1)+A\\
\rho (u- \gamma(2\rho-1) + A) + G\big(\gamma(2\rho-1)\big) &  {\rm for \ } \gamma(2\rho -1) + A\leq u\leq A+\gamma.
\end{array}\right.
$$
Whereas in Problem \ref{problem2}, the unique minimizer is given by
$$v_2(u) = \left\{\begin{array}{ll}
\left(1-\frac{A}{\gamma}\right)x& {\rm for \ } 0\leq u\leq \gamma-A\\
G(u-A) &  {\rm for \ } \gamma - A \leq u\leq \gamma(2\rho-1)+A\\
\rho (u- \gamma(2\rho - 1) + A) + G\big(\gamma(2\rho-1)\big)&  {\rm for \ } \gamma(2\rho-1)+A\leq u\leq A+\gamma.
\end{array}\right.
$$
\end{prop}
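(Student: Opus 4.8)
The plan is to treat each optimization as a one-dimensional convex obstacle problem and to certify the stated candidate by a first-order variational inequality. Writing $\Phi(w)=w\log(w/\rho)+(1-w)\log\big((1-w)/(1-\rho)\big)$ for $w\in[0,1]$, so that $K_i(v)=\int\Phi(v'(u))\,du$, the relevant facts are that $\Phi$ is strictly convex, vanishes only at $w=\rho$, decreases on $[0,\rho]$ and increases on $[\rho,1]$, with $\Phi'(w)=\log\frac{w(1-\rho)}{(1-w)\rho}$, $\Phi'(\rho)=0$, and $\Phi'(0^{+})=-\infty$. Since the constraint $v(u)\le G(u-A)$ is preserved under convex combinations and the left-endpoint value is fixed, $v\mapsto K_i(v)$ is strictly convex on a convex admissible set, so any minimizer exhibited is automatically the unique one.

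First I would guess the minimizer from the Euler--Lagrange structure: on an interval where $v<G(\cdot-A)$ the extremal is affine, and the cheapest slope is $\rho$ provided the resulting line stays below the obstacle; on an interval where $v=G(\cdot-A)$ the path is pinned, with slope $G'(\cdot-A)=\tfrac12\big(1+(\cdot-A)/\gamma\big)$ on the parabolic piece, which increases from $0$ to $1$ as $u$ runs over $[A-\gamma,A+\gamma]$ and equals $\rho$ exactly at $u=A+\gamma(2\rho-1)$; and slopes must match at a free/pinned junction, which is automatic because $G$ is convex on $(-\gamma,\gamma)$. In Problem~\ref{problem1}, $v(A-\gamma)=0=G(-\gamma)$ with $G'(-\gamma)=0<\rho$ forces the path onto the obstacle immediately; it detaches tangentially with slope $\rho$ at $u=A+\gamma(2\rho-1)$ and stays below $G$ afterwards by convexity, yielding $v_1$. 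In Problem~\ref{problem2}, $v(0)=0<G(-A)$, so the path starts on a free arc; since $A>\gamma(1-\rho)$ the line $u\mapsto\rho u$ rises above the parabola near $u=\gamma-A$ and is inadmissible, so the steepest admissible starting line is the tangent to $u\mapsto G(u-A)$ through the origin, which a short computation shows has slope $1-A/\gamma$ and touches the parabola at $u=\gamma-A$; the path then follows the obstacle and detaches at slope $\rho$ at $u=A+\gamma(2\rho-1)$, yielding $v_2$. One checks that $v_1,v_2$ are $C^1$, have derivatives in $[0,1]$, are admissible, and that the break-points are correctly ordered (using $0<\rho<1$, and also $\gamma(1-\rho)<A<\gamma$ for $v_2$).

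Next I would prove optimality. Convexity of $\Phi$ gives $K_i(v)-K_i(v_i)\ge\int\Phi'(v_i'(u))\big(v'(u)-v_i'(u)\big)\,du$, so it suffices to show this integral is nonnegative. Put $\mu(u)=\Phi'(v_i'(u))$: it is constant on free arcs (equal to $0$ on the slope-$\rho$ arc, and to $\Phi'(1-A/\gamma)<0$ on the initial arc in Problem~\ref{problem2}), and on the obstacle arc $\mu(u)=\Phi'(G'(u-A))$ is nondecreasing, rising to $\Phi'(\rho)=0$ at $u=A+\gamma(2\rho-1)$. Since $\mu\equiv0$ to the right of $A+\gamma(2\rho-1)$, an integration by parts on $[a_i,A+\gamma(2\rho-1)]$ (with $a_i$ the left endpoint of the domain) gives $\int\Phi'(v_i')(v'-v_i')\,du=\big[\mu\,(v-v_i)\big]-\int\mu'(u)\big(v(u)-v_i(u)\big)\,du$; the boundary terms vanish, at the right because $\mu=0$ there and at the left because $v-v_i$ vanishes there (in Problem~\ref{problem1} a $0\cdot\infty$ limit, harmless since $|v-v_1|=O(u-(A-\gamma))$ while $\mu(u)=O(\log(u-(A-\gamma)))$ as $u\downarrow A-\gamma$). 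On the obstacle arc $v-v_i=v-G(\cdot-A)\le0$ and $\mu'\ge0$, while off it $\mu'=0$; hence $-\int\mu'(v-v_i)\,du\ge0$, so $K_i(v)\ge K_i(v_i)$. Equality forces $v'=v_i'$ a.e., hence $v=v_i$, recovering uniqueness.

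I expect the integration-by-parts step to be the main obstacle: one must check that $\mu$ is absolutely continuous on $(a_i,A+\gamma(2\rho-1)]$ with the asserted monotonicity, that the left-endpoint boundary term in Problem~\ref{problem1} genuinely vanishes despite $\Phi'(v_1')\to-\infty$, and that $\mu'\,(v-v_i)$ is integrable near that endpoint --- all of which reduce to the explicit asymptotics $G'(u-A)\asymp u-(A-\gamma)$ as $u\downarrow A-\gamma$. The remaining verifications ($C^1$-matching at the junctions, the slope bounds $0\le v_i'\le 1$, and the ordering of break-points in each parameter regime) are routine.
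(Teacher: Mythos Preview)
Your proof is correct and takes a genuinely different route from the paper's. The paper establishes a more general Proposition~\ref{optimization prop A>1} for an arbitrary strictly convex increasing obstacle $H$ via a penalization scheme: it introduces a $\lambda$-regularized problem (adding $\int e^{-\lambda(H-v)}du$ and restricting $v'\in[\epsilon,1-\epsilon]$ on $[a+\delta,b]$), derives the Euler--Lagrange equation $h''(v')v''=\lambda e^{-\lambda(H-v)}\ge0$ to show regularized minimizers are convex, passes to the limit $\lambda\uparrow\infty$ and then $\epsilon,\delta\downarrow0$ to extract a limit point $\bar v$ with a ``tangent line / follow $H$ / tangent line'' structure, and finally argues that $\bar v$ minimizes the original problem before enumerating cases. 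You instead guess the candidate directly from the obstacle geometry and certify it by the first-order variational inequality $K_i(v)-K_i(v_i)\ge\int\Phi'(v_i')(v'-v_i')\,du$, then integrate by parts and use that $\mu=\Phi'(v_i')$ is nondecreasing on the pinned arc where $v-v_i\le0$ and constant (hence $\mu'=0$) elsewhere. Your approach is shorter and more elementary --- it avoids the regularization and compactness/limit machinery entirely --- while the paper's route has the advantage of \emph{deriving} the tangent--obstacle--tangent structure rather than guessing it, which is what one would want for a general $H$ where the explicit touch points $y_{tan},z_\rho$ are not available in closed form. Your handling of the $0\cdot\infty$ boundary term at $u=A-\gamma$ in Problem~\ref{problem1} (via $|v-v_1|=O(u-(A-\gamma))$ against $\mu=O(\log(u-(A-\gamma)))$) and the integrability of $\mu'(v-v_1)$ there are correct and are precisely the points that need care.
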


\subsection{`Upper tail' large deviation principle}
\label{results: upper tail}
We observe that the upper bound costs $I_\gamma(A)$, when $A>\gamma(1-\rho)$, in Theorems \ref{thm:UB A large}, \ref{thm:UB A not so large}
match 
the lower bound costs in Theorems \ref{thm:ASEP A large}, \ref{thm:ASEP A not so large} in ASEP when $0<\gamma\leq 1$.

\begin{cor}
\label{thm: upper tail}
Consider ASEP with $0<\gamma\leq 1$.
The scaled positions $\{X_N/N\}$ satisfy `upper tail' large deviation limits:  For $A\geq \gamma(1-\rho)$,
\begin{align*}
\lim_{N\rightarrow\infty} \frac{1}{N}\log \P_\rho(E_A) = - I_\gamma(A).
\end{align*}

\end{cor}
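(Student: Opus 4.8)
The plan is to read off the identity by pairing the lower and upper large deviation bounds already established, with just one short extra argument at the left endpoint $A=\gamma(1-\rho)$.  Throughout, $\rho\in(0,1)$ and $0<\gamma\le 1$ are fixed, and $I_\gamma(\gamma(1-\rho))$ is to be read as the limiting value $0$ of the second branch of \eqref{I_gamma}.

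First I would dispatch the range $A\ge\gamma$.  Theorem \ref{thm:ASEP A large} supplies $\liminf_N \frac1N\log\P_\rho(E_A)\ge -I_\gamma(A)$, and Theorem \ref{thm:UB A large} supplies $\limsup_N \frac1N\log\P_\rho(E_A)\le -\I^Z(A)-K_1(v_1)$.  It then remains only to record the algebraic identity $\I^Z(A)+K_1(v_1)=I_\gamma(A)$, which is precisely what Proposition \ref{Prop:UB} gives when combined with the first branch of \eqref{I_gamma} (concretely, $K_1(v_1)=I_\gamma(A)-\I^Z(A)=-A\log(1-\rho)-\gamma\rho\ge 0$, using $q=1-p$).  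Hence the limit exists and equals $-I_\gamma(A)$ on $[\gamma,\infty)$.  Next I would treat $\gamma(1-\rho)<A<\gamma$ in the same way: the lower bound $-I_\gamma(A)$ comes from Theorem \ref{thm:ASEP A not so large}, the upper bound $-K_2(v_2)$ from Theorem \ref{thm:UB A not so large}, and $K_2(v_2)=I_\gamma(A)$ from Proposition \ref{Prop:UB} together with the second branch of \eqref{I_gamma}.  Along the way I would check, for consistency of the two formulas at $A=\gamma$, that $\gamma^2+4pq=(p+q)^2=1$, so the constant $c$ in \eqref{I_gamma} equals $1$ there and both branches collapse to $-\gamma\log(1-\rho)-\gamma\rho$; thus $I_\gamma$ is continuous on all of $[\gamma(1-\rho),\infty)$.

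Finally I would handle $A=\gamma(1-\rho)$ by monotonicity.  Since $E_{\gamma(1-\rho)}\supseteq E_{A'}$ for every $A'>\gamma(1-\rho)$, the case already done gives $\liminf_N\frac1N\log\P_\rho(E_{\gamma(1-\rho)})\ge -I_\gamma(A')$; letting $A'\downarrow\gamma(1-\rho)$ and invoking the continuity of $I_\gamma$ with $I_\gamma(\gamma(1-\rho))=0$ yields $\liminf_N\frac1N\log\P_\rho(E_{\gamma(1-\rho)})\ge 0$.  Since trivially $\P_\rho(E_{\gamma(1-\rho)})\le 1$ forces $\limsup_N\frac1N\log\P_\rho(E_{\gamma(1-\rho)})\le 0$, the limit is $0=-I_\gamma(\gamma(1-\rho))$, completing the claim.

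I do not anticipate a real obstacle here: all the substance lies upstream, in Theorems \ref{thm:ASEP A large}, \ref{thm:ASEP A not so large}, \ref{thm:UB A large}, \ref{thm:UB A not so large} and the calculus-of-variations solution in Proposition \ref{Prop:UB}.  The only items requiring (routine) care are the two identifications $\I^Z(A)+K_1(v_1)=I_\gamma(A)$ and $K_2(v_2)=I_\gamma(A)$, the $c=1$ consistency at $A=\gamma$, and the endpoint gluing above; none of this needs exponential tightness or G\"artner--Ellis-type input, which is why the statement is recorded as a corollary rather than proved afresh.
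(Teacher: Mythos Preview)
Your proposal is correct and follows exactly the route the paper intends: the corollary is stated immediately after the observation that the upper bound costs in Theorems \ref{thm:UB A large} and \ref{thm:UB A not so large} match the lower bound costs in Theorems \ref{thm:ASEP A large} and \ref{thm:ASEP A not so large}, and the paper gives no further proof. The identifications $\I^Z(A)+K_1(v_1)=I_\gamma(A)$ and $K_2(v_2)=I_\gamma(A)$ are already recorded in the statements of Theorems \ref{thm:UB A large} and \ref{thm:UB A not so large} themselves (and verified in their proofs via \eqref{A>1profilecost} and \eqref{1-rho<A<1}), so you need not re-derive them. Your additional care at the endpoint $A=\gamma(1-\rho)$ and the continuity check at $A=\gamma$ are details the paper leaves implicit; they are correct and harmless to include.
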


\section{Preliminaries}
\label{section preliminaries}

Before going to the proofs of the main results, we will need to detail helpful preliminary ingredients in the following $7$ subsections.  The lowerbound results make use of the first $5$ subsections, while the upperbound results quote from the first $2$ and last $2$ subsections.

\subsection{Comment on torus to line reductions}
\label{torus to line section}
 We remark that working on the line $\R$, with respect to the tagged particle motion starting from the origin, is tantamount to working on a large torus $S^1$.  Moreover, we will have occasion to use several results in Jensen's thesis \cite{Jensen_thesis} for TASEP stated on the torus $S^1$.  
 
The standard idea is that far away particles cannot come near the origin in a finite time with significant probability:  One may couple the process on $\R$ outside a large interval $[-RN, RN]$ with a system of independent Poisson rate $1$ motions that move only to the nearest right if starting to the left of the interval, and only to the nearest left if starting to the right of the interval.  One can show the event $E$ that particles outside the interval, with $R$ large, do not come into a subinterval of size $R_0N$, with $R_0<R$, up to time $NT$, has superexponential cost, that is $\lim_{R\uparrow\infty}\lim_{N\uparrow\infty}\frac{1}{N}\log \P(E) = -\infty$.  See \cite{Kosygina}; a similar argument was also formulated in 
\cite{Landim-Yau} for diffusive motions.  
 
Moreover, from this coupling, in considering all the events involving the tagged particle in this article, we may assume that the initial density of particles vanishes, or indeed is distributed as convenient, outside $[-RN, RN]$, for large $R>0$.

\subsection{Connections with the current}
\label{current section}

Let $J_t(x)$ be the (integrated) current across the site $x\in \Z$.  That is, 
$J_t(x)$ is the difference of the number of particles which have crossed from site $x$ to $x+1$ and those which have crossed from $x+1$ to $x$ up to time $t>0$.  In particular, as the interaction is nearest-neighbor, $J_t(z)-J_t(z-1) = \eta_0(z)-\eta_t(z)$.  Then, we have the relation, for $x<y$,
\begin{align}
\label{current-mass-relation}
J_t(y)-J_t(x) = \sum_{z=x+1}^y \left(\eta_0(z) - \eta_t(z)\right).
\end{align}

Our convention is that the tagged particle starts at the origin.  Then, for $a\geq 0$,
$\{X_t> a\} = \big\{ J_t(a)  \geq \sum_{z=0}^a \eta_0(z)\big\}$
signifies that the tagged particle being above $a$ at time $t$ is the same as all the particles initially between $0$ and $a$ (including the tagged particle) have crossed $a$ at time $t$.

When we start from a configuration that is empty to the left eventually, that is, $\eta_0(z)=0$ for all $z<0$ and $|z|$ large, 
since $J_t(a)= J_t(a)-J_t(0) +J_t(0)$ and $J_t(0) = \sum_{z\leq 0}\big(\eta_0(z)-\eta_t(z)\big)$ by \eqref{current-mass-relation}, we deduce for $a>0$, that
\begin{align}
\label{current-tagged}
\{X_t>a\} = \Big\{ \sum_{z\leq a}\eta_t(z) \leq \sum_{z\leq -1}\eta_0(z)\Big\}.
\end{align}
When $a<0$, the corresponding relations are
$\{X_t<a\} = \big\{ -J_t(a-1) \geq \sum_{z=a}^0 \eta_0(z)\big\}
= \big\{\sum_{z\leq a-1}\eta_t(z) \geq \sum_{z\leq 0}\eta_0(z)\big\}$.

\subsection{Basic martingales}
\label{martingale section}
We discuss two martingales associated to the tagged particle motion.  Let $N_+(t)$ and $N_-(t)$ be the number of jumps to the right and left made by the tagged particle up to time $t$.  Noting \eqref{joint}, these count processes are compensated by $\int_0^t p(1-\eta_s(X_s+1))ds$ and $\int_0^t q(1-\eta_s(X_s-1))ds$ respectively.  Then,
\begin{align*}
X_t &= N_+(t) - N_-(t)\\
&= \int_0^t \big\{p (1-\eta_s(X_s+1)) - q(1-\eta_s(X_s-1))\big\}ds + M_1(t)
\end{align*}
where $M_1$ is a martingale with bounded quadratic variation
$$\int_0^t \big\{p (1-\eta_s(X_s+1)) + q(1-\eta_s(X_s-1))\big\}ds \leq (p+q)t.$$
We may also use such a martingale, 
when the tagged particle has rates $p'$ and $q'$ different from the $p$ and $q$ rates of the other particles, via a modification of \eqref{joint}.

On the other hand, through the counting process representation, an exponential martingale may also be formed that will also be useful in the sequel.  Namely,
\begin{align*}
\exp\left\{\lambda X_t - \int_0^t \big\{p(e^\lambda -1) (1-\eta_s(X_s+1)) - q(e^{-\lambda}-1)(1-\eta_s(X_s-1))\big\}ds \right\} &= M_2(t)
\end{align*}
is a martingale for $\lambda\in \R$.

\subsection{LLNs for the tagged particle motion}
\label{LLN section}
To compute the lower bound large deviations of $X_t$, it will be helpful to understand the law of large numbers behavior under different environments.
Proofs of the following Propositions \ref{BD_LLN_prop} and \ref{BD_LLN_prop1} are deferred to Sections \ref{LLN appendix}.

We start with a limit when the tagged particle, initially at the origin, is carried by a density of particles.
\begin{prop}
\label{BD_LLN_prop}
Consider ASEP with $0<\gamma\leq 1$.  Let 
$\rho(\cdot,\cdot):[0,1]\times\R \rightarrow \R_+$ be a measurable function such that $\rho(s,u) =0$ for all $s\in [0,1]$ and $u<0$ with $|u|$ large.  
Suppose for $t\in [0,1]$ there is a unique solution $v_t=A\in \R$ such that
\begin{equation}
\label{LLN soln for v}
\int_{-\infty}^{A} \rho(t,u)du = \int_{-\infty}^0 \rho_0(u)du.
\end{equation}
Suppose also, with respect to a sequence of process measures $\P_N$ that
$\pi^N_t \rightarrow \rho(t, u)du$ in probability in $M(\R)$.

Then, in probability, as $N\rightarrow\infty$,
$\frac{1}{N}X_{Nt} \rightarrow v_t$.
\end{prop}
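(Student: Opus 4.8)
\textbf{Proof plan for Proposition \ref{BD_LLN_prop}.}
The plan is to exploit the current--mass relation from Section \ref{current section} to convert the statement about $X_{Nt}/N$ into a statement about the empirical measure $\pi^N_\cdot$, for which we already have convergence by hypothesis. First I would reduce, via the torus-to-line coupling of Section \ref{torus to line section}, to the situation where the initial density vanishes outside a fixed interval $[-RN, RN]$, so that all sums of the form $\sum_{z\le a N}\eta_t(z)$ are finite and correspond to evaluations of $\pi^N_t$ against indicator-type functions. In particular, since the tagged particle starts at the origin and the configuration is eventually empty to the left, the identity \eqref{current-tagged} gives, for macroscopic $a > 0$,
\begin{align*}
\{X_{Nt} > aN\} = \Big\{ \tfrac1N\sum_{z\le aN}\eta_{Nt}(z) \le \tfrac1N\sum_{z\le -1}\eta_0(z)\Big\},
\end{align*}
with the analogous relation for $a < 0$ from the last display of Section \ref{current section}. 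The left-hand side of the inner inequality is, up to a negligible boundary term, $\pi^N_t((-\infty, a])$, and the right-hand side is $\pi^N_0((-\infty,0))$.

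The next step is to pass to the limit on both sides. By the assumed convergence $\pi^N_t \to \rho(t,u)\,du$ in probability in $M(\R)$, and since the limiting measure has a density (hence assigns no mass to the point $\{a\}$), the distribution functions converge: $\pi^N_t((-\infty,a]) \to \int_{-\infty}^a \rho(t,u)\,du$ in probability, uniformly on compact sets of $a$ by monotonicity and continuity of the limit. Similarly $\pi^N_0((-\infty,0)) \to \int_{-\infty}^0 \rho_0(u)\,du$ in probability. Define $\Phi_t(a) = \int_{-\infty}^a \rho(t,u)\,du - \int_{-\infty}^0 \rho_0(u)\,du$; this is continuous and nondecreasing in $a$, and by hypothesis \eqref{LLN soln for v} it has the unique zero $a = v_t$. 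Hence for any $\e > 0$ we have $\Phi_t(v_t - \e) < 0 < \Phi_t(v_t + \e)$, and the convergence above forces $\P_N(X_{Nt} > (v_t-\e)N) \to 1$ and $\P_N(X_{Nt} > (v_t+\e)N) \to 0$, which is precisely $\tfrac1N X_{Nt} \to v_t$ in probability. The cases $v_t \le 0$ are handled identically using the $a < 0$ form of the current relation and the fact that $\{X_{Nt} < aN\}$ translates to $\pi^N_t((-\infty,a)) \ge \pi^N_0((-\infty,0])$; one treats $v_t = 0$ by combining a one-sided bound from each relation.

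The main obstacle I anticipate is the uniformity needed to handle the boundary terms and the passage from $\pi^N$-convergence (which a priori is convergence tested against a fixed countable family $\{f_j\}$ in the metric \eqref{distance}) to convergence of the distribution function $a \mapsto \pi^N_t((-\infty,a])$ at the specific point $a = v_t$. Since indicators of half-lines are not in $C_c^\infty$, one must sandwich $\one_{(-\infty,a]}$ between smooth functions agreeing with it off an $\e$-neighborhood of $a$ and use that the limiting measure puts mass $O(\e)$ there; letting $\e \downarrow 0$ after $N\to\infty$ closes the gap. A secondary technical point is controlling $\tfrac1N\sum_{z\le -1}\eta_0(z)$: under the relevant product initial laws this concentrates at $\int_{-\infty}^0\rho_0(u)\,du$ by the law of large numbers (the truncation outside $[-RN,RN]$ makes the sum a sum of finitely many independent bounded terms), so this causes no real difficulty. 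Everything else is a routine monotonicity-and-continuity argument once the current-mass dictionary is in place.
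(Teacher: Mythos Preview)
Your proposal is correct and follows essentially the same approach as the paper: convert $\{X_{Nt}>aN\}$ into an inequality between $\tfrac1N\sum_{z\le aN}\eta_{Nt}(z)$ and $\tfrac1N\sum_{z\le -1}\eta_0(z)$ via the current--tagged relation \eqref{current-tagged}, pass to the limit using $\pi^N_t\to\rho(t,u)\,du$, and use uniqueness of the zero of $\Phi_t$ to conclude. The paper's proof is terser and does not spell out the sandwiching of half-line indicators by smooth functions or the torus reduction, but the argument is the same; your version simply makes explicit the technical points the paper leaves implicit.
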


\begin{rem}
\label{entropic rem}
\rm We have the following comments.

1. When $\rho(\cdot, \cdot)$ is the entropic hydrodynamic density, and $\int_0^{\pm \epsilon}\rho_0(u)du \neq 0$, for an $\epsilon>0$, the statement of Proposition \ref{BD_LLN_prop} is part of the results in \cite{Reza_LLN}.  In this setting, $v_t$ satisfies \eqref{ODE-reza}.

2. In the cases that we use Proposition \ref{BD_LLN_prop}, 
it will be clear by inspection that \eqref{LLN soln for v} has a unique solution $v_t$ and that $\pi^N_t$ converges to $\rho(t,u)du$ in probability.

\end{rem}

We will also need to understand the behavior of a tagged particle in situations when either behind it or ahead of it there is empty space.  Let $\eta_{step}$ be the step configuration where $\eta_{step}(x) =\one(x\leq 0)$. 

\begin{prop}
\label{BD_LLN_prop1}
Consider ASEP with $0<\gamma\leq 1$.  

1. Suppose initially we start from the step configuration $\eta_{step}$.  Then,
 in probability, as $N\rightarrow\infty$,
$\sup_{t\in [0,1]} \left|\frac{1}{N}X_{Nt} - \gamma t\right| \rightarrow 0$.

2.  Suppose initially we start from $\nu_{\rho_0(\cdot)}(\cdot|\eta(0)=1)$ where 
$$\rho_0(u) = \left\{\begin{array}{ll}
0&{\rm for \ }u\leq 0\\
{u}/(2\gamma) & {\rm for \ }0\leq u\leq 2\gamma \rho\\
\rho& {\rm for \ }u\geq 2\gamma \rho.
\end{array}\right.
$$
Then,
in probability, as $N\rightarrow\infty$,
$\sup_{t\in [0,1]}\left|\frac{1}{N}X_{Nt} - \gamma t\right| \rightarrow 0$.
\end{prop}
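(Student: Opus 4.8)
\emph{Proof proposal.} The plan is to treat both parts in tandem, since in each case the tagged particle is an \emph{extreme} particle of the configuration, so by the no-passing property of nearest-neighbor exclusion it retains its identity for all time: from $\eta_{step}$ it is the rightmost particle $X_t=R_t:=\max\{x:\eta_t(x)=1\}$, and from the profile in part 2 (which vanishes on $(-\infty,0]$) it is the leftmost particle. In either case the site on its ``vacuum side'' stays empty for all $s$ — $\eta_s(X_s+1)\equiv 0$ in part 1, $\eta_s(X_s-1)\equiv 0$ in part 2 — so the compensated-martingale representation of Section \ref{martingale section} collapses: in part 1,
\[
X_{Nt}=\gamma Nt+q\int_0^{Nt}\eta_s(X_s-1)\,ds+M_1(Nt),
\]
and in part 2, $X_{Nt}=\gamma Nt-p\int_0^{Nt}\eta_s(X_s+1)\,ds+M_1(Nt)$, where $M_1$ has quadratic variation at most $Nt$, hence $\sup_{t\le1}|M_1(Nt)|=o(N)$ in probability by Doob's inequality. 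Thus the proposition reduces to the occupation-time estimate that the site adjacent to the tagged particle \emph{on the bulk side} is occupied for a negligible fraction of $[0,Nt]$, namely
\[
\frac1N\int_0^{Nt}\eta_s\bigl(X_s\mp1\bigr)\,ds\ \xrightarrow[N\to\infty]{\ \mathrm{prob.}\ }\ 0\qquad\text{uniformly in }t\in[0,1],
\]
with sign $-$ in part 1 and $+$ in part 2.

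One of the two needed inequalities is immediate. Couple $X_\cdot$ with an environment-independent random walk $W_\cdot$ from the origin having right-rate $p$ and left-rate $q$, sharing the tagged particle's right- and left-jump Poisson clocks, the tagged particle ignoring a ring whenever that move is blocked. In part 1 the tagged particle's right jumps are never blocked while some of its left jumps are suppressed, so $X_t\ge W_t$; in part 2 the roles of left and right are interchanged, so $X_t\le W_t$. Since $\sup_{t\le1}|W_{Nt}/N-\gamma t|\to0$ by the uniform law of large numbers for $W$, this already yields $\liminf_N X_{Nt}/N\ge\gamma t$ in part 1 and $\limsup_N X_{Nt}/N\le\gamma t$ in part 2, uniformly in $t$. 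Equivalently, it amounts to simply dropping the (signed) correction term in the displays above.

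The complementary inequality is where the work lies: one must show the bulk-side neighbour actually empties out, the point being that the tagged particle rides a region of vanishing density. In part 1, from $\eta_{step}$ the empirical measure $\pi^N_s$ converges to the rarefaction profile $\rho(s,u)=\tfrac12(1-u/(\gamma s))$ on $(-\gamma s,\gamma s)$, equal to $1$ for $u<-\gamma s$ and to $0$ for $u>\gamma s$ (cf.\ \cite{Rost}, \cite{Rezakhanlou}, \cite{Seppalainen}), so the rightmost particle sits at the edge $u=\gamma s$ where $\rho=0$; in part 2 the entropic solution of \eqref{Burgers} from the given profile is obtained from non-crossing characteristics on $[0,1)$, vanishes on $(-\infty,\gamma s]$, and has a linear segment whose foot is at $u=\gamma s$, so the leftmost particle sits at that foot. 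Given this density picture, I would control the gap $G_s$ between the tagged particle and the nearest particle on its bulk side: $G_s$ increases at rate $\ge p$ from the tagged particle's own unobstructed moves and decreases at rate at most $p+q=1$, while the density estimate at the neighbouring site forces the remaining upward contribution — the second particle's moves away from the bulk — to occur at rate $\approx q$ for a $1-o(1)$ fraction of time, so $G_s\to\infty$ off a time set of measure $o(N)$; on the complement $\eta_s(X_s\mp1)=0$, which is the occupation-time bound. Uniformity in $t$ then follows by discretizing $[0,1]$ on a mesh of width $1/m$ and bounding the oscillation of $X_{N\cdot}/N$ on each subinterval by Poisson increments of mean $N/m$.

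The main obstacle is exactly this last step — promoting the soft hydrodynamic statement ``the density near the tagged particle vanishes'' to the pathwise occupation-time statement. It is the ``second particle lags behind in a rarefaction fan'' phenomenon, and it is what places the present LLN, with its one-sided vacuum, outside the reach of the general machinery: the constraint \eqref{LLN soln for v} in Proposition \ref{BD_LLN_prop} has here a whole half-line of solutions rather than a unique one, and \cite{Reza_LLN}, which requires positive density on both sides of the origin, does not apply, so ``more specific particle arguments'' are required. A convenient alternative to the gap comparison sketched above is to invoke directly the explicit ASEP step-initial-condition estimates (cf.\ \cite{TW}).
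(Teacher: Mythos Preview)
Your reduction is correct and matches the paper's own setup: the tagged particle is extremal, the vacuum side stays empty, the martingale decomposition plus Doob gives one inequality, and the whole content is the occupation-time estimate on the bulk side. For Part 1 your fallback to \cite{TW} is legitimate and is exactly what the paper notes in the remark following the proposition; the paper's self-contained argument there is different --- it maps the gaps to a zero-range process with an infinite reservoir, shows every invariant measure on the compactified state space satisfies $\mu(\zeta(1)=0)=0$, and pulls the Ces\`aro statement $\tfrac1t\int_0^t\P(\eta_s(X_s-1)=1)\,ds\to0$ out of that.

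The genuine gap is Part 2. Your gap-process sketch does not close: for the leftmost particle, the tagged particle's unobstructed (leftward) jumps increase $G_s$ only at rate $q$, not $\ge p$, while the decrease rate is $p+q=1$ whenever $G_s\ge2$; the compensating upward contribution is the second particle's rightward jump, which happens at rate $p$ only when the \emph{third} particle is not adjacent to the second. So to run your argument you need a density estimate one particle deeper into the bulk, and by induction at every depth --- this is circular, and the net drift of $G_s$ is in fact nonpositive without extra input. The hydrodynamic picture you describe is correct but, as you yourself note, only gives the density in macroscopic regions, not the microscopic occupation of the site $X_s+1$; and \cite{TW} is about the step profile, not the rarefaction-fan profile of Part 2.

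The paper's route for Part 2 is quite different from anything you propose: it dominates $\rho_0$ by a piecewise-constant ``steps'' profile $\rho_1$ with bottom density $\epsilon$ and finitely many shocks, realizes $\rho_1$ as a stack of layers of second-class particles with initial shocks at the step locations, and invokes Ferrari's shock-fluctuation LLN \cite{Ferrari} for the leftmost particle in each layer. Because the layer velocities from the Rankine--Hugoniot relation stay strictly ordered on $[0,1]$, the higher-class particles never interfere with the bottom-layer leftmost particle, which therefore has the same LLN as the leftmost particle under $\prod_{x<0}\mathrm{Bern}(0)\times\prod_{x\ge0}\mathrm{Bern}(\epsilon)$, namely speed $\gamma(1-\epsilon)$; sending $\epsilon\downarrow0$ gives the missing lower bound.
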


\begin{rem}
\rm
We comment that
Part 1 of Proposition \ref{BD_LLN_prop1} is implied by the fixed time $t$ fluctuation result in \cite{TW}[Theorem 2] and the functional tightness Lemma \ref{tightness lemma}.  
\end{rem}

\subsection{Cost of a nonentropy profile}
\label{nonentropy cost section}

    We will also have occasion to use an explicit calculation of $I_{JV}$ for a class of profiles. Recall the definition of $f$, $g$ and $h$ in Section \ref{connections section}.
In \cite{Jensen_thesis}[p. 7] (see also \cite{Vilensky}), it is shown that the nonentropic solution, for $1\geq L\geq R\geq 0$,
$$\rho^{(L,R)}(t,u) = \left\{\begin{array}{ll}
L & {\rm for\ }u< (1-R-L)t\\
R&{\rm for \ } u\geq (1-R-L)t
\end{array}\right.
$$
for $0\leq t\leq 1$, which corresponds to a step initial profile maintaining its shock at the Rankine-Huguniot speed 
$\frac{R(1-R) - L(1-L)}{R-L} = 1-R-L$,
has dynamical cost
\begin{align}
\label{IJV cost}
I_{JV}(\rho^{(L,R)})&=g(R)-g(L) - \frac{f(R)-f(L)}{R-L} \big(h(R)-h(L)\big)\nonumber \\
&=L-R + LR\log \frac{R}{L} + (1-L)(1-R)\log \frac{1-L}{1-R}.
\end{align}

\subsection{Large deviation estimate when starting from $\eta_{step}$}

Let $Z_t$ be the position of the lead, tagged particle in ASEP starting from the step profile $\eta_{step}(x) = \one(x\leq 0)$.
The following large deviation statement for $Z_t$
will be of use in the proof of the upperbound Theorem \ref{thm:UB A large} when $A\geq \gamma$, and may be of independent interest.  We defer its proof to Section \ref{LDP section}.

Recall the formula for $\I^Z$ given in \eqref{I^Z eqn}.

\begin{prop}
\label{BD_LDP_prop}
We have, for $A\geq \gamma$, that
$$\lim_{N\uparrow\infty} \frac{1}{N} \log  \P_{\eta_{step}}\left( Z_N\geq AN\right) \leq  
-\I^Z(A).$$
\end{prop}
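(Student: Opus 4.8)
The statement concerns the lead particle $Z_t$ in ASEP started from $\eta_{step}(x)=\one(x\le 0)$, and we want the upper bound $\limsup_N \frac1N\log\P_{\eta_{step}}(Z_N\ge AN)\le -\I^Z(A)$ for $A\ge\gamma$. The natural route is an exponential Chebyshev/Markov argument using the exponential martingale $M_2(t)$ recorded in Section~\ref{martingale section}, specialized to the lead particle. Since $Z_t$ is the rightmost particle, its left neighbor is always occupied in the nearest-neighbor dynamics, so $\eta_s(Z_s-1)=1$ for all $s$, i.e.\ the particle never makes a (successful) left jump and $Z_t$ is nondecreasing. Hence the compensator in $M_2$ simplifies: the $q(e^{-\lambda}-1)(1-\eta_s(Z_s-1))$ term vanishes identically, and
\begin{equation*}
\exp\Bigl\{\lambda Z_t - \int_0^t p(e^\lambda-1)\bigl(1-\eta_s(Z_s+1)\bigr)\,ds\Bigr\}
\end{equation*}
is a mean-one martingale for every $\lambda\in\R$. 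But this doesn't yet look like a birth-death process; the point is to get the $q$-rate back in for free.

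**Getting the birth-death comparison.** The key observation is a \emph{stochastic domination}: the lead particle $Z_t$ is stochastically dominated by a free birth-death (in fact pure-birth, but we want the two-sided rate-matching) walk. Concretely, let $W_t$ be the continuous-time random walk on $\Z$ with right-jump rate $p$ and left-jump rate $q$, started at $0$. One can couple $(Z_t)$ with $(W_t)$ so that $Z_t\le W_t$ for all $t$: the lead particle attempts right jumps at rate $p$ but is sometimes blocked by the particle ahead, while $W$ is never blocked and additionally jumps left at rate $q$ — a basic monotone coupling realizing every right-attempt of $Z$ as a right-attempt of $W$, plus extra left jumps for $W$. (This is exactly the coupling implicit in \cite{TW} and standard for ASEP; alternatively argue directly via generators that $\E[f(Z_t)]\le \E[f(W_t)]$ for increasing $f$.) Then $\P_{\eta_{step}}(Z_N\ge AN)\le \P(W_N\ge AN)$, and $W_N/N$ satisfies a large deviation principle by the Gärtner--Ellis theorem with rate function $\sup_\lambda\{\lambda A - (p(e^\lambda-1)+q(e^{-\lambda}-1))\}$. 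A direct optimization — setting the derivative $p e^\lambda - q e^{-\lambda}=A$, whose positive root is $e^\lambda = c = (A+\sqrt{A^2+4pq})/(2p)$ — yields precisely $A\log c - pc - q/c + 1 = \I^Z(A)$, which matches \eqref{I^Z eqn}. Thus $\limsup_N\frac1N\log\P(W_N\ge AN)\le -\I^Z(A)$, giving the claim.

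**Alternative (avoiding the coupling).** If one prefers to stay self-contained, one can run the exponential martingale argument directly on $Z_t$ for $\lambda\ge 0$: from $\E[\exp\{\lambda Z_t - \int_0^t p(e^\lambda-1)(1-\eta_s(Z_s+1))ds\}]=1$ and $0\le 1-\eta_s(Z_s+1)\le 1$, and since $e^\lambda-1\ge 0$, we get $\E[e^{\lambda Z_t}]\le e^{p(e^\lambda-1)t}$. Then $\P(Z_N\ge AN)\le e^{-\lambda AN}\E[e^{\lambda Z_N}]\le \exp\{N(p(e^\lambda-1)-\lambda A)\}$, so $\limsup_N\frac1N\log\P(Z_N\ge AN)\le -\sup_{\lambda\ge 0}\{\lambda A - p(e^\lambda-1)\}$. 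Optimizing gives $e^\lambda=A/p$ and the bound $-(A\log(A/p)-A+p)$, which is the Poisson rate, \emph{larger} than $\I^Z(A)$ when $q>0$ — so this cruder estimate alone does not suffice, and one genuinely needs the $q$-rate, i.e.\ the coupling with $W$ (or an equivalent refinement using that $Z$ could, in an enlarged construction allowing left moves at rate $q$ into the always-occupied site, be compared to $W$ with the left jumps simply never executed). The cleanest writeup is the monotone coupling $Z_t\le W_t$.

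**Main obstacle.** The routine parts are the Gärtner--Ellis application and the calculus identity $\sup_\lambda\{\lambda A-(p(e^\lambda-1)+q(e^{-\lambda}-1))\}=\I^Z(A)$ for $A\ge\gamma$ (one should check $A\ge\gamma$ is exactly the range where the optimizing $\lambda$ is $\ge 0$, consistent with $\I^Z(\gamma)=0$). The one step requiring care is the stochastic domination $Z_t\preceq W_t$: one must set up the coupling so that the lead particle's blocked right-attempts and the free walk's left jumps are handled consistently, and confirm the ordering is preserved at all jump times. This is standard but is the place where a sloppy argument would slip; I would state it as a short lemma with an explicit generator/basic-coupling justification, noting that the dynamics of $Z_t$ depends on $\eta$ only through $\eta(Z_t+1)$, so the environment can be coupled a single site at a time.
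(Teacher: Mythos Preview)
Your proposal contains a genuine error that breaks the argument. The claim that ``$\eta_s(Z_s-1)=1$ for all $s$'' is false: starting from $\eta_{step}$, once the lead particle jumps right from $0$ to $1$, site $0$ becomes vacant and $\eta_s(Z_s-1)=0$; thereafter the gap behind the lead particle can open and close. What \emph{is} always true is the complementary statement $\eta_s(Z_s+1)=0$, since the rightmost particle stays rightmost. With this corrected picture, the lead particle jumps right at rate $p$ (never blocked) and left at rate $q(1-\eta_s(Z_s-1))\le q$, so in the natural basic coupling with the free birth--death walk $W_t$ (rates $p$ right, $q$ left) one gets $Z_t\ge W_t$, not $Z_t\le W_t$. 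Hence $\P(Z_N\ge AN)\ge \P(W_N\ge AN)$ and the monotone coupling yields a \emph{lower} bound, the opposite of what is needed.

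Your ``alternative'' paragraph is correct and, as you note, only delivers the Poisson rate $A\log(A/p)-A+p$, which is strictly larger than $\I^Z(A)$ for $q>0$; so the whole difficulty is precisely to recover the $q$-term in the exponent, and the coupling does not do it. The paper's proof supplies the missing idea: it views the system of gaps as an attractive zero-range process, so that $X_t=\sum_j\zeta_t(j)$ and $\one(\zeta_s(1)\ge 1)=1-\eta_s(X_s-1)$ are increasing functions and hence \emph{positively} correlated. Equivalently, $\int_0^t\eta_s(X_s-1)\,ds$ and $X_t$ are negatively correlated, so
\[
\frac{1}{t}\,\E_{\eta_{step}}\Big[\int_0^t \eta_s(X_s-1)\,ds\ \Big|\ X_t>At\Big]\ \le\ \frac{1}{t}\,\E_{\eta_{step}}\Big[\int_0^t \eta_s(X_s-1)\,ds\Big]\ \longrightarrow\ 0,
\]
the last convergence coming from the LLN $X_t/t\to\gamma$ (Proposition~\ref{BD_LLN_prop1}, Part 1) together with the martingale identity for $X_t$. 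One then splits $\P(X_t>At)$ according to whether $\int_0^t\eta_s(X_s-1)\,ds\le\epsilon t$; the conditional estimate makes the complementary piece negligible at the logarithmic scale, and on the good piece the exponential martingale $M_2$ with $\eta_s(X_s+1)\equiv 0$ gives exactly the two-sided bound $-\lambda A + p(e^\lambda-1)+q(e^{-\lambda}-1)(1-\epsilon)$, which after $\epsilon\downarrow 0$ and optimizing over $\lambda>0$ yields $-\I^Z(A)$. In short, the $q$-term enters not via domination by a free walk but via a correlation inequality showing the left neighbor is \emph{typically} empty conditional on the deviation; this is the step your proposal is missing.
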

We comment that a nontrivial bound when $A<\gamma$ is not to be expected given $Z_N/N \rightarrow \gamma$ in probability as stated in Part 1 of Proposition \ref{BD_LLN_prop1}.  We remark that it may be possible, although not tried here, to extract a large deviation statement from the nonasymptotic formula for the probability mass function of $Z_t$ given in \cite{TW}.

\subsection{A superexponential bound for the flux}
\label{superexponential subsection}
 A version of the following estimate for TASEP was shown in Lemmas 2.7, 2.9 in Jensen \cite{Jensen_thesis}.  We state here a generalization for ASEP below, useful in the upperbound results, and remark that its argument, with simple modifications, would also yield a statement for finite-range asymmetric exclusion.  The proof is deferred to Section \ref{section superexponential}.

 Suppose $\rho_0(z)=0$ for $|z|>R$ with $R$ large.  Let $\rho(\cdot, \cdot)$ be the entropic solution of \eqref{Burgers}, and 
$v(t,x) = \int_{-\infty}^x \rho(t, z)dz$ for $t\geq 0$ and $x\in \R$.  
For $\delta>0$, let also $B_\delta(\rho_0)$ be the $\delta$-ball around the measure $\rho_0(u)du$ with respect to the distance \eqref{distance}.

 \begin{lem}
\label{superexponential cost lemma}
 Consider ASEP with $0<\gamma\leq 1$.  
 Let $t\in [0, 1]$. For each $\varepsilon>0$, and $L\in \R$, there is an $\delta>0$ such that
 \begin{align}
\label{super exp statement}
\lim_{N\rightarrow\infty}\frac{1}{N} \log \sup_{\eta_0}
\P_{\eta_0}\Big( v(t, L) >\frac{1}{N}\sum_{x\leq LN} \eta_{Nt}(x)  + \varepsilon\Big) = -\infty.
 \end{align}
 Here, the supremum is over $\eta_0$ such that $\pi^N_0\in B_\delta(\rho_0)$ and $\eta_0(x)=0$ for $|x|>RN$.
 \end{lem}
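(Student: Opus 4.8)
The plan is to prove the superexponential estimate \eqref{super exp statement} by following the strategy of Jensen \cite{Jensen_thesis}[Lemmas 2.7, 2.9] for TASEP, adapted to the two-sided rates of ASEP. The key observation is that the microscopic height function $h_N(t,x) := \frac{1}{N}\sum_{y\le xN}\eta_{Nt}(y)$ cannot fall substantially below the continuum height $v(t,x)$, because the continuum height is a (viscosity/entropy) \emph{lower envelope} dictated by the Hopf--Lax formula associated with the flux $f(\rho)=\rho(1-\rho)$ and its Legendre dual. More precisely, writing the entropic height as $v(t,x)=\inf_{y}\{v(0,y) + t\,G_\gamma((x-y)/t)\}$ via the Hopf--Lax formula developed in Appendix \ref{Hopf Lax section}, it suffices to control, for a suitable fixed reference point $y^*$ (the approximate minimizer), the event that the current $J_{Nt}(LN) - J_{Nt}(y^*N)$ across the strip $[y^*N, LN]$ exceeds the maximal flux compatible with mass conservation — something that forces more particles to cross than the dynamics allows except at exponentially small cost.

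First I would reduce the statement using the current–mass relation \eqref{current-mass-relation}: the event $\{v(t,L) > h_N(t,L) + \varepsilon\}$ entails, for some lattice point $z=\lceil y^* N\rceil$ with $y^*$ chosen from a finite $\varepsilon$-net of the (compact) relevant range, that the scaled current $\frac{1}{N}(J_{Nt}(LN)-J_{Nt}(z))$ overshoots its ceiling $L - y^* - (v(0,L^-)-v(0,y^*)) = $ (number of particles initially in the strip) plus the maximal net influx. Since $\pi_0^N\in B_\delta(\rho_0)$, the initial term is within $o(1)$ of $v(0,L)-v(0,y^*)$ uniformly in $\eta_0$ for $\delta$ small. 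Next I would bound the overshoot probability using the exponential martingale attached to the current across a \emph{single bond}: across bond $(x,x+1)$ the current $J_{Nt}(x)$ is compensated by $\int_0^{Nt}[p\eta_s(x)(1-\eta_s(x+1)) - q\eta_s(x+1)(1-\eta_s(x))]ds$, whose integrand is bounded by $p$; summing the telescoping single-bond estimates and applying the entropy inequality / Chebyshev at exponential scale gives a bound of the form $\exp\{-N\,c(\varepsilon,L)\cdot\lambda + N\,o(1) + \text{(rate term from }\lambda)\}$, and optimizing over $\lambda\to\infty$ (the single-bond rate function is superlinear because the occupation integrand is bounded) yields $-\infty$. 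The key inputs are: (i) the Hopf--Lax representation of $v$, so that a finite net of reference points $y^*$ suffices; (ii) exponential tightness of the current across any fixed macroscopic strip, which follows from the martingale $M_2$-type estimate with bounded jump intensities; (iii) the torus-to-line reduction of Section \ref{torus to line section} to restrict to $\eta_0$ supported in $[-RN,RN]$, so the infimum in Hopf--Lax is effectively over a compact set.

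The main obstacle I anticipate is handling the infimum in the Hopf--Lax formula uniformly: the naive union bound over all reference lattice points $z\in[-RN, LN]$ costs a polynomial factor $O(N)$, which is harmless at exponential scale, but one must verify that for \emph{every} such $z$ the relevant current-overshoot event has superexponentially small probability with a rate that is bounded \emph{below} uniformly in $z$ — equivalently, that the ``excess flux'' $\varepsilon$ one needs to produce does not degrade to $0$ as $z$ ranges over the strip. This is where the convexity and the explicit form of $G$ in \eqref{G-def-intro} enter: because $v(t,L)$ is the pointwise infimum, if $v(t,L)>h_N(t,L)+\varepsilon$ then for the minimizing $y^*$ we get a genuine $\varepsilon$-gap between the required and the typical current, and the single-bond exponential estimate then produces a rate at least $C(\varepsilon)$ uniformly. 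A secondary technical point is the ASEP-specific two-sidedness: unlike TASEP, particles can backtrack, so the relevant inequality is on the \emph{net} current and one must retain both the $p(e^\lambda-1)$ and $q(e^{-\lambda}-1)$ terms in the compensator; this complicates the optimization over $\lambda$ slightly but does not change the conclusion, since taking $\lambda\to+\infty$ still sends the bound to $-\infty$ as the $q$-term is subdominant. I would carry out the single-bond exponential estimate carefully (this is the heart of the argument) and then assemble the pieces via the union bound over the net of reference points.
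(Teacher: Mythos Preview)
Your proposal has a genuine gap at its core. The single-bond current $J_{Nt}(x)$ has \emph{order-one} jumps, so its exponential martingale yields
\[
\E_{\eta_0}\big[e^{\lambda J_{Nt}(x)}\big]\ \le\ \exp\big\{Nt\,[p(e^{\lambda}-1)+q(e^{-\lambda}-1)]\big\},
\]
and the exponent in your Chebyshev bound is $-N\big(\lambda a - t[p(e^{\lambda}-1)+q(e^{-\lambda}-1)]\big)$. This is optimized at a \emph{finite} $\lambda$ and produces a finite rate (of order $a\log a$); sending $\lambda\to\infty$ makes the bound trivial, not $-\infty$. A current across a strip exceeding its typical value by $\varepsilon$ is an ordinary large-deviation event of cost $O(N)$, so your mechanism cannot deliver a superexponential estimate.

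The paper's argument (Section~\ref{section superexponential}) avoids this by never estimating a bond current directly. Instead it takes the single Hopf--Lax argmax $b=b(t,L)$ (no union bound is needed, since the formula is a $\sup$, not an $\inf$ as you wrote) and tests the dynamics against the \emph{smooth, moving} function $J(s,u)=F\big(u-b-\tfrac{s}{t}(L-b)\big)$, a mollified step traveling at the characteristic speed $(L-b)/t$. Two things happen: (i) after a 1-block replacement $\eta(x)(1-\eta(x\pm1))\rightsquigarrow \eta^k(x)(1-\eta^k(x))$, the drift term is pointwise bounded, via Legendre duality, by $F'\cdot G((b-L)/t)$, which integrates exactly to $-tG((b-L)/t)$ and yields the deterministic inequality $h_N(t,L)\ge v(t,L)+O(\delta)+D_4(t)$; (ii) the residual martingale $M(t)$ is a smooth linear functional of $\eta$ with jumps of size $O(1/N)$, so its Laplace transform is Gaussian-type, $\E[e^{\pm N\lambda M(t)}]\le e^{C\lambda^{2}}$, and \emph{this} is what permits $\lambda\to\infty$ and the superexponential conclusion. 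The 1-block error $D_2$ is handled by the standard superexponential replacement lemma. Your outline is missing both ingredients: the moving test function that converts the flux into a deterministic upper bound via convex duality, and the observation that smoothness of the test function is exactly what makes the martingale part sub-Gaussian rather than Poissonian.
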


In words, the average of particles to the left of $LN$, at time $t=1$ say, cannot be more than obtained by the unique entropic solution, $\int_{-\infty}^L \rho(t, z)dz$, starting from the corresponding macroscopic initial condition, without incurring superexponential cost.
  
	\section{Lowerbounds}
  \label{LB section}
 We give in the following $5$ subsections the proofs of Theorems \ref{thm:ASEP A large}, \ref{thm:ASEP A not so large}, \ref{thm:TASEP nonentropy}, and Propositions \ref{Prop:ASEP A small}, \ref{Prop:ASEP A<0}.
  
 \subsection{Proof of Theorem \ref{thm:ASEP A large}: ASEP $A\geq \gamma$}
 \label{LB A>1 sect}

 We break the argument into steps.
 \vskip .1cm
 
 {\it Step 1.}
 For the tagged particle to deviate $X_{N} \sim AN$ for $A\geq \gamma$, 
 the particles in front of the tagged particle should be beyond $AN$ at time $N$.  This suggests initial particle distributions empty in an interval to the right of the origin.
  In particular, consider the
 profile $\rho_{0,\epsilon}$, for $\epsilon>0$ small, given in \eqref{u-A-large},
 and initial distribution $\nu^N_{\rho_{0,\epsilon}(\cdot)}(\cdot|\eta(0)=1)$.
  
  Let $\rho_\epsilon(t,x)$ be the entropic solution with respect to the Burgers evolution \eqref{Burgers}
 starting from this initial profile, for $0<t\leq 1$,
 \begin{equation}
 \label{u_eps_evol}
 \rho_{\epsilon}(t,u) = \left\{\begin{array}{ll}
 \rho& {\rm for \ }u< \gamma(1-\rho)t\\
 0& {\rm for \ }\gamma(1-\rho)t\leq u< A+\epsilon - \gamma(1- t)\\
\frac{u-A-\epsilon+\gamma(1-t)}{2\gamma(1-t)}
&{\rm for \ }A+\epsilon -\gamma(1- t)\leq u< A+\epsilon+\gamma(1-t)(2\rho-1)\\
 \rho& {\rm for \ }u\geq A+\epsilon+\gamma(1-t)(2\rho -1).
 \end{array}
 \right.
 \end{equation}
 Here, limits of the piecewise evolution are straightforwardly found.  For instance, the initial shock at the origin by the Rankine-Hugoniot relation moves with velocity $\gamma(1-\rho)$.
Also, if $r(t)$ is the velocity of the corner in $\rho_\epsilon$ initially at $u=A -\gamma + \epsilon + 2\rho\gamma$, then $\rho(t, r(t))\equiv \rho$, or $r'(t)\partial_u \rho(t, r(t)) = \gamma (\partial_u \rho)[1-2\rho]$ giving $r'(t) = \gamma(1-2\rho)$.

 The scaled relative entropy cost of $\nu^N_{\rho_{0,\epsilon}(\cdot)}(\cdot | \eta(0)=1)$ with respect to $\nu_{\rho}(\cdot |\eta(0)=1)$ 
 is calculated:
 \begin{align}
 \label{A>1profilecost}
 &\lim_{N\rightarrow\infty}\frac{1}{N} E_{\nu_{\rho_{0,\epsilon}(\cdot)}(\cdot|\eta(0)=1)}\left[\log \frac{d\nu^N_{\rho_{0,\epsilon}(\cdot)}(\cdot | \eta(0)=1)}{d\nu_\rho(\cdot|\eta(0)=1)}\right]
= -(A+\epsilon)\log(1-\rho) - \gamma\rho.
\end{align}
\vskip .1cm

{\it Step 2.}
Under ASEP, the unimpeded tagged position $X_{Nt}$ is a birth-death process with rates $Np$ and $Nq$.  The scaled relative entropy cost of the birth-death process to achieve a deviation of $\bar AN$ at time $t=1$ is
$\I^Z(\bar A) = \bar A \log c - pc -q/c + 1$
with $c = \big(\bar A + \sqrt{ \bar A^2 + 4pq}\big)/2p$ (cf. \eqref{I^Z eqn}).  Note for $\bar A\geq \gamma$ that $c\geq 1$.

\vskip .1cm
{\it Step 3.}
Recall that  $\B_{p,q}$ is the process law of the joint process $(\eta_{Nt}, X_{Nt})$ and $\B_{p', q'}$ is the law when the unscaled rates of the tagged particle are changed to $p'$, $q'$ for right and left jumps up to time $t=1$.  Note that $p'-q'=\bar A$.  We make the choice $\bar A = A_\epsilon= A+\epsilon/2$.  The Radon-Nikodym derivative $\frac{d\B_{p',q'}}{d\B_{p,q}}$ can be computed (cf. Appendix 1.7 \cite{kl}) with $F(t, (\eta_t, X_t)) = X_t\log c$ as
\begin{align}
\label{Step 4 LB}
&\exp\Big\{ F(Nt, (\eta_{Nt}, X_{Nt})) - F(0, (\eta_0, X_0)) - \int_0^{Nt} ds e^{-F(s, (\eta_s, X_s))}\big(\partial_s + \mathcal{L}\big)e^{F(s, (\eta_s, X_s))}\Big\}\nonumber\\
&\ \ \ = c^{X_{Nt}} \exp\left\{ - \int_0^{Nt} \left[p(1-\eta_s(X_s+1))(c-1) + q(1-\eta_s(X_s-1))(c^{-1}-1)\right]ds\right\}. 
\end{align}

Recall that $E_{\Q_N}$ is  expectation with respect to
$d\Q_N =  
\frac{d\nu^{N}_{\rho_{0,\epsilon}(\cdot)}(\cdot|\eta(0)=1)}{d\nu_\rho(\cdot|\eta(0)=1)} \frac{d\B_{p_A, q_A}}{d\B_{p,q}} d\P_\rho$, the changed process measure.
We write
\begin{align*}
&\P_\rho(E_{A}) = 
E_{\Q_N}\Big[ \frac{d\nu_\rho(\cdot|\eta(0)=1)}{d\nu^N_{\rho_{0,\epsilon}(\cdot)}(\cdot|\eta(0)=1)} \frac{d\B_{p, q}}{d\B_{p',q'}} \one(E_{A})\Big].
\end{align*}

\vskip .1cm
{\it Step 4.}
We now aim to calculate further $d\B_{p,q}/d\B_{p',q'}$.  Under $\Q_N$, 
let $y_s$ be the position of the next particle to the right of the tagged particle at time $s$, which initially is above $(A+\epsilon-\gamma)N$.  
By coupling, 
$y_s$ is greater than the position $y^*_s$ of a particle that starts at $\lfloor(A+\epsilon-\gamma)N\rfloor$ with no particles to the left of it, and others distributed according to $\nu_{\rho^*_\epsilon}$ where $\rho^*_\epsilon(u) = \rho_{0,\epsilon}(u)\one(u>A-\gamma+\epsilon)$.  By Part 2 of Proposition \ref{BD_LLN_prop1}, 
with respect to the evolution \eqref{u_eps_evol}, 
we have
$\sup_{0\leq s\leq t}\left |\frac{1}{N}y^*_{Ns} - \left(A+\epsilon-\gamma + \gamma s\right)\right|$
vanishes in probability as $N\uparrow\infty$.

By coupling again, $X_s$ is less than the position $x^*_s$ of a particle, with the same $p', q'$ rates, started at the origin from a step profile $\eta_{step}$ of particles each also with $p', q'$ rates, with no particles in front.
By Part 1 of Proposition \ref{BD_LLN_prop1},
$B_1:= \sup_{0\leq s\leq 1} \left | \frac{1}{N}x^*_{Ns} - (p'-q')s\right|$
vanishes in probability as $N\uparrow\infty$.

We conclude that $X_{Ns} \leq (p'-q')Ns + B_1$. With the choice $\epsilon>0$ small so that $p'-q' = \bar A = A+\frac{\epsilon}{2}>\gamma$, $X_{Ns}$ is always away by a macroscopic distance $O(\epsilon)$ away from 
$y_s$ for $0\leq s\leq 1$ with high probability.

\vskip .1cm
{\it Step 5.}  Let $B_2 = \big\{ \sup_{0\leq s\leq 1} \eta_{Ns}(X_{Ns}+1) = 0\big\}$.
 For $\delta<\epsilon/4$, consider the restriction
$$E':=
\Big\{ \sup_{0\leq s\leq 1}\Big|\frac{1}{N}X_{Ns} - (p'-q')s\Big|<\delta\Big\} \cap B_2
\subset E_{A}.$$
By Step 4, $\Q_N(B_2)\rightarrow 1$ as $N\rightarrow\infty$.

Note that $\frac{1}{N}X_{Ns} - \int_0^s \big\{p'[1-\eta_{Nr}(X_{Nr}+1)] -q'[1-\eta_{Nr}(X_{Nr}-1)]\big\}dr = M_1(Ns)/N$ is a martingale with quadratic variation of order $O(1/N)$ (cf. Section \ref{martingale section}).  Then, with the right coordinate empty, 
$$\big[\frac{1}{N}X_{Ns} - (p' -q')s \big]^-= \big[q\int_0^s \eta_{Nr}(X_{Nr}-1)dr + \frac{M_1(Ns)}{N}\big]^- \leq \big[\frac{M_1(Ns)}{N}\big]^-=O\big(\frac{1}{\sqrt{N}}\big),$$
where $[a]^- = \max\{-a, 0\}$ denotes the negative part.
 By Doob's maximal inequality,
we have $\Q_N\big(\sup_{0\leq s\leq 1} \big[\frac{1}{N}X_{Ns} - (p'-q')s\big]^-<\delta) \rightarrow 1$.  

The previous high probability bound $X_{Ns}\leq (p'-q')Ns + \delta$, as $B_1<\delta$ gives control of the positive part.
Hence, we conclude $\Q_N(E')\rightarrow 1$ as $N\uparrow\infty$.

Recall \eqref{Step 4 LB}.  As $\eta_{Ns}(X_{Ns}+ 1) = 0$ for $0\leq s\leq 1$ on $E'$ and $\int_0^{Nt} q\eta_s(X_s-1)(c^{-1}-1)ds \leq 0$ as $c^{-1}\leq 1$ irrespective of the occupation of the left coordinate, we have
\begin{align*}
&\frac{1}{N}\log\frac{d\B_{p' ,q'}}{d\B_{p,q}} \one(E')
= \one(E') \Big\{\log(c) \frac{1}{N}X_{N}\\
&\ \  \ \ - \frac{1}{N}\int_0^{N} \left[p(1-\eta_s(X_s+1))(c-1) + q(1-\eta_s(X_s-1))(c^{-1}-1)\right]ds\Big\} \\
&\leq \one(E')\Big\{(\log c)\big[(p'-q') + O(\epsilon)\big] -(p'+q' -1)\Big\}.
\end{align*}
Also, $\frac{1}{N}\log\frac{d\B_{p' ,q'}}{d\B_{p,q}} \one(E'^c) \leq \big[(\log c) \frac{W_N}{N} -q(c^{-1}-1)\big]\one(E'^c)$, where $X_N\leq W_N$ is the Poisson number of jump attempts to the right by the tagged particle with mean $p'N$.
Note, by Schwarz inequality, $E_{\Q_N} \big[\big(W_N/N\big)\one(E'^c)\big] \leq C_{p'}\sqrt{Q_N(E'^c)}$.  

Then,
$\limsup_{N\rightarrow\infty}\frac{1}{N}E_{\Q_N}\big[\log  \frac{d\B_{p' ,q'}}{d\B_{p,q}} \one(E')\big]
 \leq  (\log c)\big[(p'-q') + O(\epsilon)\big] -(p'+q' -1)$,
 and
  $\limsup_{N\rightarrow\infty}\frac{1}{N}E_{\Q_N}\big[\log\frac{d\B_{p' ,q'}}{d\B_{p,q}} \one(E'^c)\big] 
 \leq 0$. 
 Together,
 \begin{align}
 \label{BD entropy cost}
 \limsup_{N\rightarrow\infty}\frac{1}{N}E_{\Q_N}\Big[\log\frac{d\B_{p' ,q'}}{d\B_{p,q}} \Big] \leq
  (\log c)\big[(p'-q') + O(\epsilon)\big] -(p'+q' -1).
  \end{align}

\vskip .1cm
{\it Step 6.}
 The argument is now standard:  By Jensen's inequality,
  \begin{align*}
& \frac{1}{N}\log \P_\rho(E_{A}) \\
&\geq \frac{1}{N}\log \Q_N(E') + \frac{1}{N}\frac{1}{\Q_N(E')}E_{\Q_N}\Big[ \log \Big[ \frac{d\nu_\rho(\cdot|\eta(0)=1)}{d\nu^N_{\rho_{0,\epsilon}(\cdot)}(\cdot|\eta(0)=1)} \frac{d\B_{p, q}}{d\B_{p',q'}}\Big]\one(E')\Big].
\end{align*}

Now,
\begin{align*}
&E_{\Q_N}\Big[ \log \Big[\frac{d\nu_\rho(\cdot|\eta(0)=1)}{d\nu^N_{\rho_{0,\epsilon}(\cdot)}(\cdot|\eta(0)=1)}  \frac{d\B_{p, q}}{d\B_{p',q'}} \Big] \one(E')\Big]
\geq E_{\Q_N}\Big[ \log \Big[ \frac{d\nu_\rho(\cdot|\eta(0)=1)}{d\nu^N_{\rho_{0,\epsilon}(\cdot)}(\cdot|\eta(0)=1)}  \frac{d\B_{p, q}}{d\B_{p',q'}} \Big]  \Big] \\
&\ \ \ \ \ \ \ \ \ \ \ \ \ - \Q_N(E'^c)\frac{1}{\Q_N(E'^c)}E_{\Q_N}\Big[ \log \Big[ \frac{d\nu_\rho(\cdot|\eta(0)=1)}{d\nu^N_{\rho_{0,\epsilon}(\cdot)}(\cdot|\eta(0)=1)}  \frac{d\B_{p, q}}{d\B_{p',q'}} \Big]  \one(E'^c)\Big].
\end{align*}
By Jensen's inequality once more,
\begin{align*}
\frac{-1}{\Q_N(E'^c)}E_{\Q_N}\Big[ \log \Big[ \frac{d\nu_\rho(\cdot|\eta(0)=1)}{d\nu^N_{\rho_{0,\epsilon}(\cdot)}(\cdot|\eta(0)=1)}  \frac{d\B_{p, q}}{d\B_{p',q'}} \Big]  \one(E'^c)\Big]
\geq \log \Q_N(E'^c) - \log \P_\rho(E'^c).
\end{align*}

\vskip .1cm

{\it Step 7.}
As $\Q_N(E')\rightarrow 1$, $\P_\rho(E'^c)\rightarrow 1$, 
and $\Q_N(E'^c)\left(\log \Q_N(E'^c)
- \log \P_\rho(E'^c)\right)
\rightarrow 0$,
by the relative entropy calculations \eqref{A>1profilecost} and \eqref{BD entropy cost},
we recover
\begin{align*}
\liminf_{N\rightarrow\infty}
&\frac{1}{N}\log \P_\rho(E_{A})\\
&\geq \liminf_{N\rightarrow\infty}\left\{ \frac{1}{N}E_{Q_N}\Big[\log \frac{d\nu_\rho(\cdot|\eta(0)=1)}{d\nu^N_{\rho_{0,\epsilon}(\cdot)}(\cdot|\eta(0)=1)}\Big] + \frac{1}{N}E_{\Q_N}\Big[ \log \frac{d\B_{p, q}}{d\B_{p',q'}}\Big]\right\}\\
&\geq (A+\epsilon)\log(1-\rho) + \gamma \rho
 -(A+\frac{\epsilon}{2})\log c + pc + q/c -1 + O(\epsilon) 
\end{align*}
where $c = A + \epsilon/2 + \sqrt{(A+\epsilon/2)^2 + 4pq}\big)/2p$.  Taking $\epsilon\downarrow 0$, we achieve $-I_\gamma(A)$.
\qed

 \subsection{Proof of Theorem \ref{thm:ASEP A not so large}: ASEP $\gamma(1-\rho)<A<\gamma$}
 \label{proof A not so large section}
 
 For $X_{N}$ to move to near $AN$ where $\gamma(1-\rho)\leq A< \gamma$, it turns out, in contrast to the case $A\geq \gamma$, that we do not need to modify the rates of the tagged particle.  By only modifying the initial distribution, we can find a strategy which achieves the desired large deviation cost.
 
Recall $A_\epsilon = A+\epsilon$ here for $0<\epsilon<\gamma-A$.  Consider the initial profile $\rho_{0,\epsilon}$ given in \eqref{u-A-not so large},
and the corresponding (entropic) hydrodynamic evolution \eqref{Burgers}, for times $0< t\leq 1$:
 \begin{align}
\label{A not so large evo}
\rho_\epsilon(t,x) = \left\{\begin{array}{ll}
 \rho&{\rm for \ }u<m(t)\\
 \rho + \frac{1-{A_\epsilon}/{\gamma} - \rho}{s(t) - m(t)}(u-m(t)) & {\rm for \ } m(t)\leq u < s(t)\\
 1-\frac{A_\epsilon}{\gamma} &{\rm for \ }s(t)\leq u< \ell(t)\\
1-\frac{A_\epsilon}{\gamma} + \frac{ \rho - 1+A_\epsilon/\gamma}{r(t)-\ell(t)}(u-\ell(t))& {\rm for \ }\ell(t) \leq u< r(t)\\
 \rho&{\rm for \ } u\geq r(t),
 \end{array}
 \right.
 \end{align}
 where the rarefaction (forming as $1-A_\epsilon/\gamma<\rho$) is bounded by $m(t) = -\gamma\rho t$ and $s(t) = A_\epsilon t$.
 Also,
$\ell(t) = \gamma -A_\epsilon + (2A_\epsilon-\gamma)t $ and $r(t) = A_\epsilon-\gamma+2\rho\gamma + (1-2\rho)\gamma t$.

 The scaled relative entropy cost of $\nu^N_{\rho_{0,\epsilon}(\cdot)}(\cdot |\eta(0)=1)$ with respect to $\nu_{\rho}(\cdot|\eta(0)=1)$ is found in the limit as $N\uparrow\infty$ and $\epsilon\downarrow 0$ as
 \begin{align}
 \label{1-rho<A<1}
&\lim_{\epsilon\downarrow 0} \lim_{N\rightarrow\infty}\frac{1}{N}E_{\nu_{\rho_0(\cdot)}(\cdot|\eta(0)=1)} \left[ \log \frac{d\nu^N_{\rho_0(\cdot)}(\cdot |\eta(0)=1)}{d\nu_{\rho}(\cdot|\eta(0)=1)}\right]\nonumber\\
&\ \ \ \ =A\log\frac{A}{\gamma(1-\rho)} -A + \gamma(1-\rho) \ = \ I_\gamma(A).
 \end{align}

Consider large torus approximations as discussed in Section \ref{torus to line section}.  In the Skorohod space $D([0,1], M(\R))$, with the initial condition $\nu^N_{\rho_{0, \epsilon}(\cdot)}(\cdot|\eta(0)=1)$, 
we have by the hydrodynamic limit \eqref{Burgers} and continuity at the endpoint $t=1$ that $\pi_1^N$ converges to $\rho_\epsilon(1,\cdot)du$ in probability. 
Then, by Proposition \ref{BD_LLN_prop},
 noting the hydrodynamic evolution \eqref{A not so large evo},
 the scaled tagged motion $\frac{1}{N}X_{N}$ converges to $v_1=A_\epsilon>A$ in probability.

 Then, we have 
 $\lim_{N\uparrow\infty}\P_{\nu^N_{\rho_{0, \epsilon}(\cdot)}(\cdot| \eta(0)=1)}\big(E_A\big)=1$. 
 Moreover, write
 \begin{align*}
 \P_\rho(E_A) &= \E_{\nu^N_{\rho_0(\cdot)}(\cdot|\eta(0)=1)}\Big[
 \frac{d\nu_\rho(\cdot|\eta(0)=1)}{d\nu^{N}_{\rho_{0, \epsilon}(\cdot)}(\cdot |\eta(0)=1)} \one(E_{A}) \Big].
 \end{align*} 
 By the calculation \eqref{1-rho<A<1} and Jensen's inequality, analogous to the argument in Step 6 in Section \ref{LB A>1 sect}, 
 we have
 $\lim_{\epsilon\downarrow 0}\liminf_{N\rightarrow\infty}
 \frac{1}{N}\log  \P_\rho(E_A) \geq -I_\gamma(A)$, 
finishing the proof. \qed

 \subsection{ Proof of Theorem \ref{thm:TASEP nonentropy}: TASEP $0<A<1-\rho$}
 \label{nonentropy lb section}

In TASEP ($\gamma =1$), for $X_N$ to slow down and be near $AN$, as discussed in the introduction, nonentropic solutions to the hydrodynamic equation are of relevance.  We consider first when $0<A<1-\rho$.

Recall $A_\epsilon = A(1-\epsilon)$ for $0<\epsilon<1$, and consider the nonentropic solution $\rho_\epsilon(t,u)$ of the hydrodynamic equation \eqref{Burgers} given in \eqref{nonentropy evo}, and its initial profile
 $$\rho_{0, \epsilon}(u) = \left\{\begin{array}{ll}
 \rho&{\rm for \ }u<0\\
 1-A_\epsilon&{\rm for \ }0\leq u<\rho\\
 \rho&{\rm for \ }u\geq \rho.
 \end{array}\right.
 $$
 In \eqref{nonentropy evo}, the entropic shock location $r(t) = [1-(1-A_\epsilon+\rho)]t = (A_\epsilon-\rho)t$ and the nonentropic one is $s(t)=\rho + [1-(1-A_\epsilon + \rho)]t = \rho+ (A_\epsilon-\rho)t$.

 The scaled relative entropy of $\nu^N_{\rho_{0, \epsilon}(\cdot)}(\cdot|\eta(0)=1)$ with respect to $\nu_\rho(\cdot|\eta(0)=1)$ satisfies
 \begin{align}
\label{nonentropy rel entropy}
&\lim_{\epsilon\downarrow 0}\lim_{N\rightarrow\infty} \frac{1}{N} E_{\nu_{\rho_{0, \epsilon}(\cdot)}(\cdot|\eta(0)=1)} \Big[ \log \frac{d\nu^N_{\rho_{0, \epsilon}(\cdot)}(\cdot|\eta(0)=1)}{d\nu_\rho(\cdot|\eta(0)=1)}\Big]\nonumber \\
&\ \ \ \ \ \ \ \ \ \ \ \ \ \  = \rho\Big[(1-A)\log \frac{1-A}{\rho} + A\log \frac{A}{1-\rho}\Big].
\end{align}

Define now
 $\O = \big\{ \{\pi^N_t: 0\leq t\leq 1\} \in B_\delta(\rho_\epsilon(\cdot, u)du)\big\}$
 where $B_\delta$ is a ball of small radius $\delta>0$ around the measure-valued trajectory $\{\rho_\epsilon(t,u)du: 0\leq t\leq 1\}$ with respect to the Skorohod topology $D([0,1], M(\R))$.
  By Jensen's thesis \cite{Jensen_thesis}[Theorem 6.1] 
	(applied with the origin shifted to $\rho$, $s=A_\epsilon-\rho$ and $u = 1-A_\epsilon$),
there is a sequence $\{\mathcal{R}_N\}$ of TASEP process measures, say starting from initial distributions $\{\nu^N_{\rho_{0, \epsilon}(\cdot)}(\cdot|\eta(0)=1)\}$ such that 
 \begin{align}
 \label{R_N limit}
 \lim_{N\uparrow\infty} \mathcal{R}_N(\O)=1
 \end{align}
 and the scaled relative entropy satisfies
 \begin{align}
 \label{nonentropy cost}
& \lim_{\epsilon\downarrow 0}\lim_{N\rightarrow\infty} \frac{1}{N}E_{\mathcal{R}_N} \left[\log \frac{d\mathcal{R}_N}{d\P_{\nu^N_{\rho_{0,\epsilon}(\cdot)}(\cdot |\eta(0)=1)}}\right] \nonumber \\
&\ \ \ \ \ \ \ \ \ \ \  =1-A-\rho + (1-A)\rho\log\frac{\rho}{1-A} +A(1-\rho)\log \frac{A}{1-\rho}
\end{align}
by the evaluation in \eqref{IJV cost},
 with $L=1-A_\epsilon$ and $R=\rho$,
of the large deviation cost of this nonentropic profile; see Section \ref{nonentropy cost section}.

 By the discussion in Section \ref{torus to line section}, we may approximate the process by one on a large torus. Then, for $t=1$, the equation \eqref{LLN soln for v}, in terms of \eqref{nonentropy evo}, has a unique solution $v_1 = A_\epsilon$.  Also, by \eqref{R_N limit} and continuity of $\pi^N_t$ at the endpoint $t=1$, $\pi^N_1$ concentrates around $\rho_\epsilon(1,\cdot)du$ under $\mathcal{R}_N$.
 By 
  Proposition \ref{BD_LLN_prop} with $\delta, \epsilon>0$ small, we have $\O\subset F_A$ and 
 $\lim_{N\rightarrow\infty} \mathcal{R}_N\big(F_A\big) \geq \lim_{N\rightarrow\infty}\mathcal{R}_N\big(\O) = 1$.
 
Recall
$d\Q_N = \frac{d\nu^{N}_{\rho_{0, \epsilon}(\cdot)}(\cdot|\eta(0)=1)}{d\nu_\rho(\cdot|\eta(0)=1)}\frac{d\mathcal{R}_N}{d\P_{\nu^N_{\rho_{0, \epsilon}(\cdot)}(\cdot|\eta(0)=1)}} d\P_\rho$.  Observe that the sum of entropies \eqref{nonentropy rel entropy}, \eqref{nonentropy cost} equals $I_1(A)$, and $\lim_{N\rightarrow\infty}Q_N(\O)=\lim_{N\rightarrow \infty}\mathcal{R}_N(\O)=1$.
Given
 \begin{align*}
 P_\rho(F_{A}) 
 &\geq Q_N(\O) \frac{1}{Q_N(\O)} \int \one(\O) \frac{d\nu_\rho(\cdot|\eta(0)=1)}{d\nu^N_{\rho_{0, \epsilon}(\cdot)}(\cdot|\eta(0)=1)}\frac{dP_{\nu^N_{\rho_{0, \epsilon}(\cdot)}(\cdot|\eta(0)=1)} }{dQ_N} dQ_N,
  \end{align*}
we may
conclude
 $\lim_{\epsilon\downarrow 0}\liminf_{N\rightarrow\infty}\frac{1}{N}\log \P_\rho(F_A) \geq  -I_1(A)$, by
following the same path as in Step 6 of Section \ref{LB A>1 sect}.

We now discuss when $A=0$.  Under the change of measure $Q_N$, the tagged particle, initially at the origin and blocked by particles at all sites between $1$ and $\lfloor \rho N\rfloor$, cannot move until the `hole', called $y_\cdot$, initially nearest to the right of $\lfloor \rho N\rfloor$, reaches the origin.  This `hole', moving purely to the left, is not influenced before it reaches the origin.  The number of its jumps has the statistics of a Poisson process with rate $N(\rho-\epsilon)$ up to time $t=1$. So, as $y_0$ is near $\lfloor \rho N\rfloor$, $y_{N\cdot}$ is typically away from the origin:  $\sup_{0\leq s\leq t}|y_{Ns}/N - \rho(1-s) -\epsilon s| \rightarrow 0$ in probability with respect to $Q_N$.  Correspondingly, $Q_N(X_N/N = 0) \rightarrow 1$.    Now, following the same ideas as in Section \ref{LB A>1 sect}, the scaled large deviation cost of changing the initial distribution is $-\rho\log \rho$, and the scaled cost of changing the rates of the `hole' from a Poisson process with rate $1$ to rate $\rho - \epsilon$ is $(\rho - \epsilon) \log (\rho-\epsilon) -(\rho-\epsilon) + 1$.  Adding these, taking $\epsilon \downarrow 0$, gives $I_1(0) = 1-\rho$, from which we conclude $\liminf_{N\uparrow\infty} \frac{1}{N}\log \P_\rho(X_N/N = 0) \geq -I_1(0)$.
\qed

 \subsection{Proof of Proposition \ref{Prop:ASEP A small}: ASEP $0< A<\gamma(1-\rho)$}

 We follow a similar strategy as in the previous subsection.  However, for ASEP ($0<\gamma\leq 1$), we give a lower bound based on entropic notions, as large deviation lower bounds in general ASEP for the hydrodynamical evolution are not currently in hand.
 
 Recall $A_\epsilon = A(1-\epsilon)$ for $0<\epsilon<1$.  Consider the initial profile $\rho_{0, \epsilon}$ given in \eqref{u-A-middle}.
 Entropically, the density evolves for $0<t\leq 1$
as
 \begin{align*}
\rho_\epsilon(t,u)
&=\left\{ \begin{array}{ll}
 \rho& {\rm for \ }u<  (A_\epsilon-\rho\gamma)t\\
 1-A_\epsilon/\gamma&{\rm for \ } (A_\epsilon-\rho\gamma)t\leq u< \gamma -(\gamma - A_\epsilon)t\\
 \rho+\tfrac{\rho - (1-A_\epsilon/\gamma)}{t\big[\gamma-A_\epsilon + \gamma(1-\rho)\big]}\big(u-\gamma-\gamma(1-\rho)t\big)
 &{\rm for  }-(\gamma-A_\epsilon)t\leq u-\gamma< \gamma(1-\rho)t\\
 \rho& {\rm for \ }u\geq \gamma +\gamma(1-\rho)t.
 \end{array} \right. 
 \end{align*}
Recall the form of $\mathcal{J}_1(A)$ in the statement of 
Proposition \ref{Prop:ASEP A small}.  The scaled relative entropy cost of $\nu^N_{\rho_{0, \epsilon}(\cdot)}(\cdot|\eta(0)=1)$ with respect to $\nu_\rho(\cdot|\eta(0)=1)$ satisfies
\begin{align*}
&\lim_{\epsilon\downarrow 0}\liminf_{N\rightarrow\infty} \frac{1}{N}E_{\nu_{\rho_{0, \epsilon}(\cdot)}(\cdot|\eta(0)=1)}\left[ \log \frac{d\nu^N_{\rho_{0, \epsilon}(\cdot)}(\cdot|\eta(0)=1)}{d\nu_\rho(\cdot|\eta(0)=1)}\right]  = \mathcal{J}_1(A).
 \end{align*}

Consider large torus approximations as discussed in Section \ref{torus to line section}.  Under initial condition $\nu^N_{\rho_{0, \epsilon}(\cdot)}(\cdot|\eta(0)=1)$, by the hydrodynamic limit \eqref{Burgers} in $D([0,1], M(\R))$ and continuity at the endpoint $t=1$, $\pi^N_1$ converges to $\rho_\epsilon(1,\cdot)du$ in probability.   Noting the above evolution 
$\rho_\epsilon(t, u)$, 
there is a unique $v_1=A_\epsilon$ satisfying \eqref{LLN soln for v}.  Then, by Proposition \ref{BD_LLN_prop},
 $X_{N}/N$ converges to $A_\epsilon<A$ in probability.

 Hence, we have
 $\lim_{\epsilon\downarrow 0}\lim_{N\rightarrow\infty}\frac{1}{N}P_\rho\left(F_A\right)\geq - \mathcal{J}_1(A)$ by following straightforwardly the procedure in Section \ref{LB A>1 sect}.
To be brief, we do not repeat these details.  \qed

 \subsection{Proof of Proposition \ref{Prop:ASEP A<0}: ASEP $A<0$}
 Here, we discuss a lower bound for ASEP when $0<\gamma<1$, since in TASEP ($\gamma =1$) a deviation of $X_N \sim AN$ for $A<0$ is impossible.
  Recall, for $\epsilon>0$, the initial profile $\rho_{0,\epsilon}$ given in \eqref{u-A<0}.
 and consider its entropic evolution for $0<t\leq 1$, where the first shock devolves as a rarefaction wave and the second shock maintains its structure,
 \begin{align*}
 \rho_\epsilon(t,u) = \left\{\begin{array}{ll}
 \rho& {\rm for \ }u<A-\gamma - \epsilon-\rho\gamma t\\
 \frac{-\rho}{\gamma(1+\rho)t}(u-A+\gamma +\epsilon -\gamma t)& {\rm for \ }A-\gamma - \epsilon-\rho\gamma t \leq u< A-\gamma -\epsilon +\gamma t\\
 0&{\rm for \ }A-\gamma -\epsilon +\gamma t\leq u< \gamma(1-\rho)t\\
 \rho&{\rm for \ } u\geq \gamma(1-\rho)t.
 \end{array}
 \right.
 \end{align*}
 
 The scaled relative entropy cost of the initial profile, as $N\uparrow\infty$ is
 \begin{align}
\label{cost1}
\lim_{N\rightarrow\infty} \frac{1}{N}E_{\nu_{\rho_\epsilon(\cdot)}(\cdot|\eta(0)=1)}\left[\log \frac{d\nu^N_{\rho_\epsilon(\cdot)}(\cdot|\eta(0)=1)}{d\nu_\rho(\cdot|\eta(0)=1)}\right] = (A-\gamma -\epsilon)\log(1-\rho).
\end{align}
 
By the technique in Step 2 of Section \ref{LB A>1 sect}, we may change the rates of the tagged particle to $p'=pc$, $q'=q/c$ so that if it were the sole particle in the system, $\lim_{N\rightarrow\infty}\frac{1}{N}X_N = \bar A$ in probability at time $t=1$, with large deviation cost
 \begin{align}
\label{cost2}
 \bar A\log c - pc -q/c +1
\end{align}
 where $c = \big(\bar A + \sqrt{\bar A^2 + 4pq}\big)/2p$. Note for $\bar A\leq \gamma$ that $0<c\leq 1$. We will take $\bar A = A_\epsilon=A-\epsilon/2$.  
 
 We may follow the strategy in Section \ref{LB A>1 sect}. Let $y_\cdot$ be the location of the nearest particle to the left of the tagged particle.  This location cannot be more than the position of the lead particle starting from the step profile $\eta_0(x) = \one\big(x\leq (A-\gamma - \epsilon)N\big)$.  By Part 1 of Proposition \ref{BD_LLN_prop1}, the lead particle concentrates around $(A-\gamma-\epsilon)N + \gamma Ns$ at time $Ns$ with high probability.  Also, by coupling, $X_\cdot$ is greater than the leftmost particle $x^*_\cdot$ starting from a step profile $\eta^*_0 = \one(x\geq 0)$ where all particles have right and left rates $p'$ and $q'$.  Via Part 1 of Proposition \ref{BD_LLN_prop1}, noting that the drift is leftward, $x^*_{Ns}/N$ concentrates around $(p'-q')s$ for all $s\in [0,1]$ with high probability.  Therefore, $y_{Ns}$ is away a macroscopic distance from the tagged position $X_{Ns}$ for all $s\in [0,1]$.
 
 Hence, we may conclude, as in Section \ref{LB A>1 sect}, that $X_N/N \in F_A$ with high probability.  Recall $\mathcal{J}_2(A)$ in the statement of Proposition \ref{Prop:ASEP A<0}.  Adding the two costs \eqref{cost1}, \eqref{cost2}, we may conclude both $\lim_{\epsilon\downarrow 0}\liminf_{N\uparrow\infty}E_{\mathcal{Q}_N}\big[\log \frac{d\P_\rho}{d\mathcal{Q}_N}\big]\geq -\mathcal{J}_2(A)$ and
 $$
\lim_{\epsilon\rightarrow 0} \liminf_{N\uparrow\infty} \P_\rho\left(\frac{X_N}{N} \in F_A\right) \geq - \mathcal{J}_2(A).$$
 Again, to be brief, we do not repeat these details.  \qed

 \section{Upperbounds}
 \label{UB section}

 We concentrate on when the macroscopic displacement $A$ of the tagged particle at macroscopic time $t=1$ satisfies $A> \gamma(1-\rho)$.  After some initial discussion, we prove Theorems \ref{thm:UB A large} and \ref{thm:UB A not so large} in Sections \ref{UB subsection 1} and \ref{UB subsection 2} for the cases $A\geq \gamma$ and $\gamma(1-\rho)<A\leq \gamma$.
  \smallskip

{\it Torus approximation.} By remarks in Section \ref{torus to line section}, for the specified $A$, we may assume initially that there are no particles at $z$ when $|z|> \z N$ for a large $\z>0$, and the distribution on sites $-\z N<z<\z N$ is given by $\prod_{-\z N<z<\z N}{\rm Bern}(\rho)$.  We denote this initial distribution conditioned to have a particle at the origin by $\nu_{0,\rho}(\cdot|\eta(0)=1)$ and $\P_{0,\rho}$ as the associated process measure.
 \smallskip

{\it Current relation.} Recall the set $E_{A}= \big\{\frac{1}{N}X_N\geq A\big\}$, and the current relation \eqref{current-tagged}.
 We have
\begin{align}
\label{upp 1}
E_{A} \subset\Big\{ \sum_{y<AN} \eta_N(y) \leq  \sum_{y< 0} \eta_0(y)\Big\}.
\end{align}

\smallskip
{\it Large deviation bound with respect to $\eta_{step}$.}
Consider the tagged particle position $Z_\cdot$ starting from the step initial profile $\eta_{step}(x)=\one(x\geq 0)$. 
Note that $Z_\cdot$ does not depend on the initial configuration of particles $\eta_0$ and, 
by coupling, $Z \geq X$.  In particular, 
\begin{align}
\label{XlessZ}
\P_{\eta_0}(X_N>AN) \leq \P_{\eta_{step}}(Z_N>AN),
\end{align}
for which there is a nontrivial large deviation estimate in Proposition \ref{BD_LDP_prop} when $A\geq \gamma$.

\smallskip
{\it Cost of changing initial condition.}
Now, with respect to a continuous $\beta: \R \rightarrow (0,1)$, let
$\beta_0$ be the function
$$\beta_0(u) = \left\{\begin{array}{ll}
0& {\rm for \ }|u|> \z\\
\beta(u)&{\rm for \ } -\z\leq u\leq \z.
\end{array}\right.
$$
Form the local equilibrium measure $\nu^N_{\beta_0(\cdot)}(\cdot|\eta(0)=1)$, 
and let $\P_\beta$ and $\E_\beta$ be shorthand for the associated process measure and expectation, starting from $\nu^N_{\beta_0(\cdot)}(\cdot|\eta(0)=1)$.

Note that the Radon-Nikodym derivative equals
\begin{align*}
\frac{d\nu^N_{\beta_0(\cdot)}(\cdot|\eta(0)=1)}{d\nu_{0, \rho}(\cdot|\eta(0)=1)} = \prod_{-\z N\leq y \leq \z N} \left(\frac{\beta(y/N)}{\rho}\right)^{\eta_0(y)} \left(\frac{1-\beta(y/N)}{1-\rho}\right)^{1-\eta_0(y)}.
\end{align*}
Consider the functional $h^N_\beta(\pi_0^N)=\log \frac{d\nu^N_{\beta_0}}{d\nu_{0,\rho}}$ evaluated as
\begin{align}
\label{upp 2}
h^N_\beta(\pi^N_0)= \frac{1}{N}\sum_{-\z N\leq y\leq \z N} \eta_0(y)\log \left(\frac{\beta(y/N)}{\rho}\right) + (1-\eta_0(y))\log 
\left(\frac{1-\beta(y/N)}{1-\rho}\right).
\end{align}
We observe, uniformly over $\eta_0$, that $N|h^N_\beta(\pi^N_0)| = O(N)$.

Let $\mathcal{M}_{\z}$ be the compact set of measures $\mu$ on $[-\z,\z]$,
such that $\mu([a,b])\leq |b-a|$ for $-\z\leq a<b\leq \z$,
endowed with the distance \eqref{distance}.   Let also $\mathcal{M}_{ac}$ be the closed subset of absolutely continuous measures $W$ supported on $[-\z, \z]$ with density bounded by $1$.
  Note that $\frac{N}{N+1}\pi^N_t$ belongs to $\mathcal{M}_{\z}$, and so $\pi^N_t$ belongs to a $\delta$-envelope of $\mathcal{M}_{\z}$ for any $\delta>0$ for all large $N$.

For a measure $W\in \mathcal{M}_{\z}$, define 
\begin{align*}
h_\beta(W) &= \int_{-\z}^{\z} \log \frac{\beta(u)(1-\rho)}{(1-\beta(u))\rho}W(dx) + \int_{-\z}^{\z} \log \frac{1-\beta(u)}{1-\rho} du.
\end{align*}
It is known that the relative entropy $K_{-\z, \z}$ of a measure $W\in\mathcal{M}_{\z}$ with respect to $\rho\one(-\z\leq u\leq \z)du$ is infinite if $W$ is not absolutely continuous, and equals
$$K_{-\z, \z}(\theta) = \int_{-\z}^{\z} du \left\{ \theta(u)\log \frac{\theta(u)}{\rho} + (1-\theta(u))\log \frac{1-\theta(u)}{1-\rho}\right\}$$
when $W(du) = \theta(u)du$;
see \cite{kl}[Lemma V.5.2].  Also, by lower-semicontinuity, the level sets of $K_{-\z, \z}$ are closed and therefore compact subsets of $\mathcal{M}_{\z}$.

Observe, for all large $N$, when the distance of $\pi^N_0$ to $W(du)=\beta_0(u)du$ is within $\delta>0$, that
\begin{align}
\label{upp4}
h^N_\beta(\pi^N_0) \geq h_\beta(W) - 2\delta = K_{-\z, \z}(\beta_0) - 2\delta.
\end{align}

\smallskip
{\it Superexponential bound for the flux.}
 Consider the entropic solution $\rho(\cdot,\cdot)$ of \eqref{Burgers} starting from $\beta_0$.
 Recall $v_\beta(t, x)=\int_{-\infty}^x\rho(t,u)du$, for $t\geq 0$ and $x\in \R$, and also $B_\delta(\beta_0)$, for $\delta>0$, is the $\delta$-ball around the measure $\beta_0(u)du$ with respect to the distance \eqref{distance} on $\mathcal{M}_{\z}$.  

Then, by Lemma \ref{superexponential cost lemma}, for each $L>0$ and $\epsilon>0$, we may choose $\delta>0$ small so that
\begin{align}
\label{upp 3}
\limsup_{N\rightarrow\infty} \frac{1}{N}\log \P_\beta \left(v_\beta(1,L)>\frac{1}{N} \sum_{y \leq LN}  \eta_N(y) +\epsilon, \pi_0^N\in B_\delta(\beta_0)\right) = -\infty.
\end{align}

\subsection{Proof of Theorem \ref{thm:UB A large}: ASEP $A\geq \gamma$}
\label{UB subsection 1}
We break the proof into steps.  Up to and including Step 5, the value of $A$ is in range $A>\gamma(1-\rho)$.  Only in Step 6, do we specialize to when $A\geq \gamma$. 
\vskip .1cm

{\it Step 1.}  
With the remarks \eqref{upp 1}, \eqref{XlessZ}, \eqref{upp 2}, \eqref{upp4}, \eqref{upp 3}, we have
\begin{align}
\label{A>1 help}
&\limsup_{N\rightarrow\infty} \frac{1}{N}\log \P_{0,\rho}\big(X_N\geq AN, \pi^N_0\in B_\delta(\beta_0)\big)\\
&\ \ = \limsup_{N\rightarrow\infty} \frac{1}{N}\log \P_{0,\rho}\big(X_N\geq AN,
 \big\{ \frac{1}{N}\sum_{y<AN} \eta_N(y) \leq \frac{1}{N}\sum_{y<0}\eta_0(y)\big\}, \pi^N_0\in B_\delta(\beta_0)\big)\nonumber\\
&\ \ = \limsup_{N\rightarrow\infty} \frac{1}{N}\log E_{\beta^N_0}\Big[\exp\big\{-Nh^N_\beta(\pi^N_0)\big\}\one(\pi^N_0\in B_\delta(\beta_0)) \nonumber \\
&\ \ \ \ \  \times \one\big(X_N\geq AN\big)\one\big( v_\beta(1, A)-\epsilon< \frac{1}{N}\sum_{y< AN} \eta_N(y)\big)
 \one\big(\frac{1}{N}\sum_{y<AN} \eta_N(y) \leq \frac{1}{N}\sum_{y<0}\eta_0(y)\big) \Big]\nonumber\\
&\ \ \leq \limsup_{N\rightarrow\infty} \frac{1}{N}\log E_{\beta^N_0}\Big[\exp\big\{-Nh^N_\beta(\pi^N_0)\big\}\one(\pi^N_0\in B_\delta(\beta_0)) \nonumber \\
&\ \ \ \ \ \times \one\big( v_\beta(1, A)-\epsilon< \frac{1}{N}\sum_{y<0}\eta_0(y)\big) 
\P_{\eta_0}(X_N\geq AN)\Big]\nonumber\\
&\ \ \leq \limsup_{N\rightarrow\infty}\frac{1}{N}\log P_{\eta_{step}}(Z_N\geq AN) \nonumber\\
&\ \ \ \ \ \ \ - \one\left(v_\beta(1, A)-\epsilon< v_\beta(0,0) +\delta\right)\big[K_{-\z, \z}(\beta_0)-2\delta\big].\nonumber
\end{align}
Here, we bounded $\frac{1}{N}\log \big(\one\left(v_\beta(1, A)-\epsilon< v_\beta(0,0) +\delta\right)E_{\beta^N_0}\big[ \exp \big\{ -N(K_{-\z, \z}(\beta_0)-2\delta)
\big\}\big] \big)$ by $- \one\left(v_\beta(1, A)-\epsilon< v_\beta(0,0) +\delta\right)\big[K_{-\z, \z}(\beta_0)-2\delta\big]$.

\vskip .1cm
{\it Step 2.}
Since $\mathcal{M}_{\z}$ is compact,
from the open cover $\{B_\delta(\beta_0): \beta(\cdot):\R \rightarrow (0,1)\}$ of $\mathcal{M}_{\z}$, we may select a finite subcover $\{B_\delta(\beta^1_0), \ldots, B_\delta(\beta^j_0)\}$.  By applying \eqref{A>1 help} and subadditivity, we have
\begin{align*}
&\limsup_{N\rightarrow\infty} \log \P_{0,\rho}\big(X_N\geq AN\big)\\
&\ \ \  \leq \limsup_{N\rightarrow\infty}\frac{1}{N}\log \P_{\eta_{step}}(Z_N\geq AN) \\
&\ \ \ \ \ \ \ \ \ \  -\min_{1\leq i\leq j} \one\left(v_{\beta^i}(1, A)-\epsilon< v_{\beta^i}(0,0) +\delta\right)\big[K_{-\z, \z}(\beta^i_0)-2\delta\big].
\end{align*}

Minimizing over all $\beta_0$, we have the further bound
\begin{align}
\label{final}
&\limsup_{N\rightarrow\infty} \frac{1}{N}\log \P_{0,\rho}\big(X_N\geq AN\big) \\
&\ \ \leq \limsup_{N\rightarrow\infty}\frac{1}{N}\log \P_{\eta_{step}}(Z_N\geq AN) -\inf_{\beta_0: v_{\beta}(1, A)-\epsilon< v_{\beta}(0,0) +\delta}\big[K_{-\z, \z}(\beta_0)-2\delta\big].\nonumber
\end{align}

\vskip .1cm
{\it Step 3.}
We argue that
\begin{align}
\label{step 3 inf help}
\liminf_{\delta, \epsilon\downarrow 0} \inf_{\beta_0: v_{\beta}(1, A)-\epsilon-\delta< v_{\beta}(0,0)}\big\{K_{-\z, \z}(\beta_0)-2\delta\big\}
\geq \inf_{\beta_0: v_{\beta}(1, A)\leq v_{\beta}(0,0)} K_{-\z, \z}(\beta_0).
\end{align}

Note for the function $\beta_0(u)=0$ for $-\z\leq u\leq \z$ that $v_\beta(1,A) = v_\beta(0,0) = 0$ and $K_{-\z, \z}(\beta_0) = -2\z\log(1-\rho)<\infty$.  Then, the infima $\inf_{\beta_0: v_{\beta}(1, A)-\epsilon-\delta< v_{\beta}(0,0)}K_{-\z, \z}(\beta_0)$ are uniformly bounded over $\delta, \epsilon> 0$.  

For each $\delta, \epsilon>0$, let $\beta^{\delta, \epsilon}$ be a function, bounded by $1$, such that $v_{\beta^{\delta, \epsilon}}(1, A)-\epsilon-\delta < v_{\beta^{\delta, \epsilon}}(0,0)$ and
$$K_{-\z, \z}(\beta^{\delta, \epsilon}) <   \inf_{\beta_0: v_{\beta}(1, A)-\epsilon-\delta< v_{\beta}(0,0) }K_{-\z, \z}(\beta_0) +\delta.$$  
By compactness of the level set of $K_{-\z, \z}$ with respect to level $-2\z\log(1-\rho)$, extract a subsequence $\{\beta^k(u)du = \beta^{\epsilon_k,\delta_k}(u)du\}$ from the family $\{\beta^{\delta, \epsilon}(u)du\} \subset \mathcal{M}_{ac}$ converging to a $\beta^0(u)du\in \mathcal{M}_{ac}$ as $\epsilon^k, \delta^k\downarrow 0$.

Then, by lower-semicontinuity of $K_{-\z, \z}$, we have
\begin{align}
\label{upp 5} 
\liminf_{\delta, \epsilon\downarrow 0} \inf_{\beta_0: v_{\beta}(1, A)-\epsilon-\delta< v_{\beta}(0,0) }\big\{K_{-\z, \z}(\beta_0)-2\delta\big\} \geq \liminf_{k\rightarrow\infty} K_{-\z, \z}(\beta^k) \geq K_{-\z, \z}(\beta^0).
\end{align}
Also, as $\beta^k(u)du$ converges to $\beta^0(u)du$ in the distance \eqref{distance} as $k\rightarrow\infty$, we have for $y\in \R$ that
$$v_{\beta^k}(0,y) - v_{\beta^k}(0,0)= \int_0^y \beta^k(u)du \rightarrow v_{\beta^0}(0,y) - v_{\beta^0}(0,0).$$

Recall $G$ in \eqref{G-def-intro}.  Then, from the Hopf-Lax formulation (cf. Appendix \ref{Hopf Lax section}), and lower-semicontinuity with respect to the supremum,
\begin{align*}
0 &\geq \liminf_{k\rightarrow\infty} 
v_{\beta^k}(1, A) - v_{\beta^k}(0,0) -\epsilon^k-\delta^k\\
&= \liminf_{k\rightarrow\infty} \sup_y\left\{ v_{\beta^k}(0, y) - v_{\beta^k}(0,0) - G(y-A)\right\}  \\
&\geq \sup_y\left\{v_{\beta^0}(0, y) - v_{\beta^0}(0,0) - G(y-A)\right\} \
=\ v_{\beta^0}(1, A) - v_{\beta^0}(0,0).
\end{align*}

Then, the bound $0\geq v_{\beta^0}(1, A) - v_{\beta^0}(0,0)$ implies that we may lower bound in \eqref{upp 5},
$$K_{-\z, \z}(\beta^0) \geq \inf_{\beta_0: v_{\beta}(1, A)\leq v_{\beta}(0,0)}
K_{-\z, \z}(\beta_0).$$
Hence, \eqref{step 3 inf help} holds.

 \vskip .1cm
 {\it Step 4.}
In words, the restriction in the second infimum in \eqref{step 3 inf help} is that the initial condition $\pi(0, du) = \beta_0(u)du$ is such that the {\it entropic} solution $\pi(t,du)=\rho(t,u)du$ satisfies
\begin{align*}
0&\geq v_\beta(1,A) - v_\beta(0,0)
= \sup_y \big\{v_\beta(0,y) - v_\beta(0,0) - G\big(\frac{y-A}{t}\big)\big\}.
\end{align*}
 That is, a restriction is imposed on the density $\rho(0,\cdot)=\beta_0(\cdot)$:
 \begin{align}
\label{upp 6}
 v_\beta(0,y)-v_\beta(0,0)&=\int_0^y \rho(0,u)du\\
 & \leq G(y-A) \ 
= \left\{\begin{array}{ll} 0&{\rm for \ } y\leq A-\gamma\\
\frac{\gamma}{4}\big(1+\frac{y-A}{\gamma}\big)^2 &{\rm for \ }A-\gamma\leq y\leq A+\gamma\\
y-A&{\rm for \ } y\geq A+\gamma.
\end{array}\right.\nonumber
\end{align}

\vskip .1cm
{\it Step 5.}
We argue that there is no restriction on $\rho(0,u)$ for $u\leq 0$: 
For $y\leq 0$, as $\rho(0,\cdot)\geq 0$, the integral $\int_0^y\rho(0,u)du$ is non-positive, meeting the requirement of \eqref{upp 6}, no matter the value of $\rho(0,u)$ for $u\leq 0$.  

We also argue that there is no restriction on $\rho(0, u)$ for $u\geq A+\gamma$: No matter the value of $0\leq \rho(0, u)\leq 1$ for $u\geq A+\gamma$, we have $\int_{A+\gamma}^y \rho(0, u)du \leq y-A-\gamma$ for $y\geq A+\gamma$.   Note also $\int_{0}^{A-\gamma}\rho(0, u)du\leq 0$ andso  $\int_{A-\gamma}^{A+\gamma}\rho(0, u)du \leq \gamma$ by \eqref{upp 6}. Hence, the requirement in \eqref{upp 6}, for $y\geq A+\gamma$, is satisfied as
\begin{align*}
\int_0^y \rho(0, u)du &= \int_{0}^{A-\gamma}\rho(0, u)du + \int_{A-\gamma}^{A+\gamma}\rho(0, u)du + \int_{A+\gamma}^y \rho(0, u)du\\
&\leq 0 + \gamma + (y-A-\gamma) = y-A.
\end{align*}

Then, to minimize the entropy $K_{-\z, \z}$, we should take $\rho(0,u) = \rho$ when either $u\leq 0$ or $u\geq A+\gamma$.  Moreover, the restriction \eqref{upp 6} gives $\rho(0, u)=0$ when $0\leq u< A-\gamma$.

We arrive then at the following lowerbound for the right-hand side of \eqref{step 3 inf help}:
\begin{align}
\label{step 5 var expression}
&-\big[(A-\gamma)\vee 0\big]\log(1-\rho) \\
&\ \ \ \ \ \ +\inf_{v\in \K}  \int_{(A-\gamma) \vee 0}^{A+\gamma} v'(x)\log \frac{v'(x)}{\rho} + (1-v'(x))\log \frac{1-v'(x)}{1-\rho} dx \nonumber
\end{align}
where 
$$\K = \left\{0\leq v'(z)\leq 1 {\rm \  and  \ } v(0,z)-v(0,0)\leq G(z-A) \ {\rm for \ } (A-\gamma)\vee 0\leq z\leq A+\gamma\right\}.$$

\vskip .1cm

{\it Step 6.} The problem of identifying the unique argmin of the infimum in \eqref{step 5 var expression} was solved in Proposition \ref{Prop:UB} when $A\geq \gamma$.  Noting the discussion in Step 5, the value in \eqref{step 5 var expression} is therefore achieved when $v'(u)=\beta_0(u)$ is given by
$$\beta_0(u) = \left\{\begin{array}{ll}
\rho& {\rm for \ } u<0\\
0&  {\rm for \ }0\leq u< A-\gamma\\
\tfrac{1}{2\gamma}(\gamma-A+u)& {\rm for \ } A-\gamma\leq u < 2\rho\gamma +A-\gamma \\
\rho&  {\rm for \ } u\geq 2\rho\gamma +A-\gamma.
\end{array}
\right.
$$
The relative entropy cost of this initial profile was computed 
in \eqref{A>1profilecost}, with $\epsilon=0$, as
$K_{-\z, \z}(\beta_0)=-A\log(1-\rho) - \gamma\rho$.

Now, by Proposition \ref{BD_LDP_prop}, when $A\geq \gamma$, we have
$\limsup_{N\rightarrow\infty}\frac{1}{N}\log \P_{\eta_{step}}(Z_N/N \geq A) \leq -\I^Z(A)$.  Note that the sum of $\I^Z(A)$ and $K_{-\z, \z}(\beta_0)$ equals $I_\gamma(A)$.   Then, by considering \eqref{final} and \eqref{step 3 inf help}, we have the upper bound 
$\limsup_{N\rightarrow\infty} \frac{1}{N}\log \P_{0,\rho}\left(X_N \geq AN\right) \leq -I_\gamma(A)$,
the same as the lower bound in Section \ref{LB A>1 sect}. \qed

\subsection{Proof of Theorem \ref{thm:UB A not so large}: ASEP $\gamma(1-\rho)<A< \gamma$}
\label{UB subsection 2}

We follow the development up to the end of Step 5, and the scheme of Step 6, in the proof of Theorem \ref{thm:UB A large}.

Since $\gamma(1-\rho)<A< \gamma$, we have $(A-\gamma)\vee 0 = 0$.  Also, an argmin of the infimum in \eqref{step 5 var expression}, according to Proposition \ref{Prop:UB} when $\gamma(1-\rho)<A<\gamma$ and the discussion in Step 5, is given by $v'(u)=\beta_0(u)$ where
$$\beta_0(u) = \left\{\begin{array}{ll}
\rho&   {\rm for \ } u<0\\
1-\frac{A}{\gamma}&  {\rm for \ }0\leq u \leq \gamma -A\\
\tfrac{1}{2\gamma}(\gamma-A+u)& {\rm for \ } \gamma-A\leq u \leq 2\rho\gamma +A-\gamma \\
\rho&  {\rm for \ } u\geq 2\rho\gamma +A-\gamma.
\end{array}
\right.
$$

The relative entropy cost of this initial profile was computed in \eqref{1-rho<A<1} as $I_\gamma(A)$.  Note, trivially, 
$\limsup_{N\rightarrow\infty} \frac{1}{N}\log \P_{\eta_{step}}\big(Z_N\geq AN\big) \leq 0$.
Then, with respect to \eqref{final} and \eqref{step 3 inf help},  
$\limsup_{N\rightarrow\infty}\frac{1}{N}\log \P_{0,\rho}\left(X_N\geq AN\right) \leq -I_\gamma(A)$,
matching the lower bound derived in Section \ref{proof A not so large section}.  \qed
 
 \section{A calculus of variations problem and proof of Proposition \ref{Prop:UB}}
\label{calc var section}

We first discuss a general calculus of variations problem Proposition \ref{optimization prop A>1}, and deduce Proposition \ref{optimization prop A>1} in Section \ref{UB opt section}.  Then, the proof of Proposition \ref{optimization prop A>1} is given in Section \ref{general opt section}.

Let $a<b$ and $H:[a,b]\rightarrow [0,\infty)$ be a strictly convex, strictly increasing function.  Suppose that $H$ is continuously differentiable. If the relation $H'(u)=\rho$ has a solution in $[a,b]$, denote it by $z_\rho$.  Also, if the relation $H'(u) = H(u)/(u-a)$ has a solution in $[a,b]$, denote it by $y_{tan}$.

 \smallskip

 {\it Problem.}  Consider differentiable functions $v:[a,b]\rightarrow [0,\infty)$ such that $0\leq v'(u)\leq 1$ for $u\in [a, b]$.  Minimize the cost
 $$K_{a,b}(v)=\int_{a}^{b} v'(u)\log \frac{v'(u)}{\rho} + (1-v'(u))\log \frac{1-v'(u)}{1-\rho} du$$
 such that $v(a)=0$ and $v(u)\leq H(u)$ for $a\leq u\leq b$.

Note that since the cost is strictly convex and the constraint is convex, there is at most one minimizer $v$ to the Problem. 

In the following, we consider a few types of $H$, relevant in our context, specifically the second and fourth items below.  The other items flesh out the picture, but we leave a more general treatment to the interested reader.
\begin{prop}
\label{optimization prop A>1}
The Problem has a unique minimum $v$ with the following structure depending on the characteristics of $H$.

\begin{itemize}
\item When $H(u)\geq \rho u$ for $u\in [a,b]$, the minimum with zero cost is achieved when $v(u) = \rho u$.

\item When $H(a)=0$, $H'(a)<\rho$, and $z_\rho<b$, the minimum is achieved when
$$v(u) = \left\{\begin{array}{ll}
H(u)&{\rm  for \ } 0\leq u\leq z_\rho\\
\rho (u-z_\rho)+H(z_\rho)& {\rm  for \ } z_\rho\leq u\leq b.
\end{array}
\right.
$$

\item When $H(a)=0$, and $H'(b)\leq \rho$ (and therefore $H'(a)\leq \rho$), the minimum is achieved when $v(u) = H(u)$.

\item When $H(a)>0$, $H'(a)<\rho$, $y_{tan}<z_\rho$, and $z_\rho<b$, the minimum is achieved when
$$v(u) = \left\{\begin{array}{ll}
H'(y_{tan})(u-a)&  {\rm for \ } a\leq u\leq y_{tan}\\
H(u)&{\rm  for \ } y_{tan}\leq u\leq z_\rho\\
\rho(u-z_\rho) + H(z_\rho)& {\rm  for \ } z_\rho\leq u\leq b.
\end{array}
\right.
$$
\end{itemize}

\end{prop}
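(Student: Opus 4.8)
The plan is, for each of the four structures, to verify that the displayed $v$ is feasible and then to certify it is the minimizer by a convexity (``calibration'') argument; uniqueness has already been recorded, being a consequence of strict convexity of the cost together with convexity of the constraint set. Write $K_{a,b}(v)=\int_a^b \Psi(v'(u))\,du$ with $\Psi(w)=w\log(w/\rho)+(1-w)\log\big((1-w)/(1-\rho)\big)$, which is strictly convex on $[0,1]$, with $\Psi\ge 0$, $\Psi(\rho)=0$, and $\Psi'$ strictly increasing and vanishing at $\rho$, so $\Psi'\le 0$ on $[0,\rho]$. For any candidate $v^\ast$, convexity of $\Psi$ yields, for every feasible $v$,
\begin{equation*}
K_{a,b}(v)-K_{a,b}(v^\ast)\ \ge\ \int_a^b \Psi'\big((v^\ast)'(u)\big)\,\big(v'(u)-(v^\ast)'(u)\big)\,du,
\end{equation*}
and it then suffices to show the right-hand side is nonnegative. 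The first case requires nothing more: if $H(u)\ge\rho(u-a)$ on $[a,b]$, then $v^\ast(u)=\rho(u-a)$ is feasible and $K_{a,b}(v^\ast)=0=\inf K_{a,b}$.

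Next I would pin down the structure of the displayed candidates and their feasibility. Since $H$ is strictly convex and strictly increasing, $H'$ is strictly increasing, so $z_\rho$ and $y_{tan}$ (when they exist) are unique, and the hypotheses of each case ($H(a)=0$ or $H(a)>0$, $H'(a)<\rho$, $z_\rho<b$, $y_{tan}<z_\rho$) put the matching points inside $(a,b)$ in the indicated order. Convexity of $H$ gives that each affine piece of $v^\ast$ lies below $H$: the slope-$\rho$ segment issuing from $(z_\rho,H(z_\rho))$ is the tangent line to $H$ at $z_\rho$ (as $H'(z_\rho)=\rho$), and in the fourth case the initial segment is, by definition of $y_{tan}$, the line through $(a,0)$ tangent to $H$ at $y_{tan}$; hence $v^\ast\le H$. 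The junctions are $C^1$ (``smooth fit''), so $(v^\ast)'$ is continuous, nondecreasing, bounded above by $\rho$, and takes values in $[0,1]$ (in the problems at hand $0\le H'\le 1$). The structural facts I would extract are: $(v^\ast)'$ is locally constant — equal to $\rho$, or in the fourth case to $H'(y_{tan})$ — exactly on $\{v^\ast<H\}$, while $(v^\ast)'=H'$ on the contact set $\{v^\ast=H\}$.

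For the optimality bound, set $\phi(u)=\Psi'\big((v^\ast)'(u)\big)$. Then $\phi$ is continuous and nondecreasing (an increasing function of the nondecreasing $(v^\ast)'$), $\phi\le 0$, $\phi\equiv 0$ on the free set $\{(v^\ast)'=\rho\}$, while on the contact set $\phi'=\Psi''(H')\,H''\ge 0$, and $\phi'=0$ off it; in particular $\phi'\cdot(H-v^\ast)\equiv 0$. Integrating by parts (as an improper integral from $a^+$ if $H'(a)=0$, which is all that is needed here),
\begin{equation*}
\int_a^b\phi\,(v'-(v^\ast)')\,du\ =\ \big[\phi\,(v-v^\ast)\big]_a^b-\int_a^b\phi'\,(v-v^\ast)\,du .
\end{equation*}
The boundary term at $a$ vanishes since $v(a)=v^\ast(a)=0$; at $b$ it equals $\phi(b)\big(v(b)-v^\ast(b)\big)$, which is $0$ when $b$ lies in the free set (cases one, two, four, where $\phi(b)=\Psi'(\rho)=0$) and is $\ge 0$ in case three, where $\phi(b)=\Psi'(H'(b))\le 0$ and $v(b)\le H(b)=v^\ast(b)$. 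Since $\phi'$ is supported on the contact set, where $v-v^\ast=v-H\le 0$, one has $-\int_a^b\phi'\,(v-v^\ast)\,du\ge 0$. Thus the right-hand side of the calibration inequality is $\ge 0$, so $K_{a,b}(v)\ge K_{a,b}(v^\ast)$, and uniqueness identifies $v^\ast$ as the minimizer.

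The step I expect to be the crux is the bookkeeping in the last paragraph: recognizing the complementarity $\phi'\cdot(H-v^\ast)\equiv 0$, which forces the integration-by-parts remainder to ``see'' only the contact set — where feasibility gives $v\le v^\ast$ — and separately settling the sign of the boundary term at the free right endpoint in the case the obstacle is still active at $b$. By contrast, checking $v^\ast\le H$ and $0\le(v^\ast)'\le1$ and that the smooth-fit points exist and are correctly ordered under the stated hypotheses is routine from the convexity and monotonicity of $H$; I would also remark that the same scheme disposes of the (non-vacuous) intermediate cases omitted from the statement.
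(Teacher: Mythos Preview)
Your verification argument is correct and constitutes a genuinely different proof from the one in the paper. The paper proceeds indirectly: it introduces a $\lambda$-penalized problem (adding $\int e^{-\lambda(H-v)}$ to the cost), derives the Euler--Lagrange equation, shows minimizers are convex, passes to the limit $\lambda\to\infty$ (and then $\delta,\epsilon\to 0$) via compactness, proves that any limit point must be feasible for the original problem and have the structure ``linear off the contact set, equal to $H$ on it,'' and finally identifies the pieces by a case analysis. This is a constructive approach that would discover the minimizer even without a guess in hand.

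Your approach instead takes the displayed candidate $v^\ast$ as given and directly certifies optimality via the convexity (calibration) inequality $K_{a,b}(v)-K_{a,b}(v^\ast)\ge\int_a^b\Psi'((v^\ast)')(v'-(v^\ast)')\,du$, then an integration by parts against the nondecreasing multiplier $\phi=\Psi'\circ(v^\ast)'$, using the complementarity $\phi'\cdot(H-v^\ast)\equiv 0$ and the sign of the boundary term at $b$. This is shorter and more transparent---it is essentially checking the KKT/obstacle-problem optimality conditions---at the cost of needing the answer up front. One technical point you should spell out more carefully is the improper boundary term at $a$ when $H'(a)=0$ (which does occur in the paper's application, Problem~1): there $\phi(a^+)=-\infty$, but since $0\le H(a+\epsilon)-v(a+\epsilon)\le H(a+\epsilon)\le \epsilon\,H'(a+\epsilon)$ and $w\,|\Psi'(w)|\to 0$ as $w\downarrow 0$, one has $|\phi(a+\epsilon)|\cdot|v(a+\epsilon)-H(a+\epsilon)|\le \epsilon\cdot H'(a+\epsilon)|\Psi'(H'(a+\epsilon))|\to 0$, so the boundary contribution indeed vanishes. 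With that detail supplied, the argument is complete.
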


\subsection{Proof of Proposition \ref{Prop:UB}}
\label{UB opt section}
Recall $G$ in \eqref{G-def-intro}.   When $H(u) = G(u-A)$ on $[A-\gamma, A+\gamma]$, for $a=A-\gamma$, $b=A+\gamma$ and $A\geq \gamma$, we have $H(a)=0$, $H'(a) = 0<\rho$, and $a<z_\rho = \gamma(2\rho -1) + A<b$.  Hence, by the last part of Proposition \ref{optimization prop A>1}, the unique minimum of the Problem is achieved when 
 $$v(u) = \left\{\begin{array}{ll}
G(u-A)&{\rm  for \ } A-\gamma\leq u\leq \gamma(2\rho -1) +A\\
\rho (u-z_\rho) + G(z_\rho -A)& {\rm  for \ } \gamma(2\rho -1)+A\leq u\leq A+\gamma.
\end{array}
\right.
$$

Moreover, when $H(u) = G(u-A)$ on $[0, A+\gamma]$, with $a=0$, $b=A+\gamma$, and $\gamma(1-\rho)\leq A<\gamma$, we have $H'(a)=\frac{1}{2}\big(1-\frac{A}{\gamma}\big)<\rho$, $H(a)=\frac{\gamma}{4}\big(1-\frac{A}{\gamma}\big)^2>0$, $a<y_{tan}= \gamma -A< z_\rho=\gamma(2\rho-1)+A<b$, and $H'(y_{tan}) = 1-\frac{A}{\gamma}$.  Hence, by the second part of Proposition \ref{optimization prop A>1}, the unique minimum of the Problem is achieved when
$$v(u) = \left\{\begin{array}{ll}
\big(1-\frac{A}{\gamma}\big)u&  {\rm for \ } 0\leq u\leq y_{tan}\\
G(u-A)&{\rm  for \ } y_{tan}\leq u\leq z_\rho\\
\rho (u-z_\rho) + G(z_\rho -A)& {\rm  for \ } z_\rho\leq u\leq A+\gamma.
\end{array}
\right.
$$

Therefore, we recover the statement in Proposition \ref{Prop:UB}. \qed

  \subsection{Proof of Proposition \ref{optimization prop A>1}}
	\label{general opt section}
 To make use of an Euler-Lagrange formula, we consider a regularized problem which will help in the analysis of the Problem.  Note, for $\delta>0$, that both $H(a+\delta), H'(a+\delta)> 0$, as $H$ is strictly increasing.  This observation allows the introduction of $\epsilon>0$ in the following regularized problem.

 \smallskip

 {\it $\lambda$-Regularized Problem.}
 Fix $\lambda>0$ and $\delta>0$ small.  Let $0<\epsilon\leq H'(a+\delta)$.  Consider functions $v$ such that $v'(u)\in [\epsilon,1-\epsilon]$ for $u\in [a+\delta, b]$ and $0\leq v(a+\delta)\leq \min\{H(a+\delta), \delta\}$.
 Minimize
 $$\int_{a+\delta}^{b}  v'(u)\log \frac{v'(u)}{\rho} + (1-v'(u))\log \frac{1-v'(u)}{1-\rho} du 
 + \int_{a+\delta}^{b} \exp\big\{-\lambda\big( H(u)-v(u)\big)\big\}du.$$
 
\smallskip 
 \vskip .1cm
 We divide now the proof of Proposition \ref{optimization prop A>1} in steps.
 \vskip .1cm
 
 {\it Step 1.}  Consider the $\lambda$-Regularized Problem.
  Noting a priori that $\epsilon\leq v'(u)\leq 1-\epsilon$ for $u\in [a+\delta, b]$, and $0\leq v(a+\delta)\leq \delta$,
  we conclude $v(u)\geq 0$ is bounded by $b-a$ for $u\in [a,b]$.  So, for each $\lambda>0$, via equicontinuity of the admissible functions, there are minimizers $v_{\lambda, \delta, \epsilon}$ of the $\lambda$-Regularized Problem.  These are bounded, continuous, and such that $\epsilon \leq v'_{\lambda,\delta, \epsilon} \leq 1-\epsilon$ a.e. on $[a+\delta, b]$.

 \vskip .1cm
 {\it Step 2.}
 Let $h(z)=  z\log \frac{z}{\rho} + (1-z)\log \frac{1-z}{1-\rho}$ and note $h'(z)= \log \frac{z}{\rho} + \rho -\log \frac{1-z}{1-\rho} -(1-\rho)$ and $h''(z) = [z(1-z)]^{-1}$ for $z\in (0,1)$.  The Euler-Lagrange weak equation for the minimizers $v= v_{\lambda, \delta, \epsilon}$ is
 \begin{align*}
 \int_{a+\delta}^{b} h'(v'(u))\phi'(u)du = -\lambda \int_{a+\delta}^{b}
 \exp\big\{ -\lambda\big(H(u) - v(u)\big)\big\}\phi(u)du.
 \end{align*}
 Here, $\phi$ is a test function which vanishes at the boundaries $a+\delta$ and $b$.

Hence, the weak derivative of $h'(v'(\cdot))$ is computed,
 $$\frac{d^{(w)}}{du} h'(v'(u)) = \lambda \exp\big\{-\lambda\big(H(u)-v(u)\big)\big\},$$
 as a bounded, continuous function.
Therefore, the strong derivative of $h'(v'(u))$ exists and equals the weak derivative
 for all $u\in (a+\delta, b)$.    
 
 \vskip .1cm
 
 {\it Step 3.}
 Since $h''(z)$ is well-defined for $z\in [\epsilon,1-\epsilon]$, we conclude that the second derivative $v''(u)$ is well defined for $u\in[a+\delta, b]$.  Indeed,
 the quotient
 $$\frac{h'(v'(u+\kappa)) - h'(v'(u))}{\kappa} = h''(\bar v_{u,\kappa}) \frac{v'(u+\kappa) - v'(u)}{\kappa},$$
 with $\bar v_{u,\kappa}$ a number between $v'(u+\kappa)$ and $v'(u)$, would diverge as $\kappa\downarrow 0$ if $v'(u+\kappa) -v'(u)$ did not vanish, contradicting existence of the strong derivative of $h'(v'(u))$.  Then, if $\lim_{\kappa\downarrow 0} v'(u+\kappa) = v'(u)$, since the strong derivative of $h'(v'(u))$ is a priori bounded, we must have the limit $\lim_{\kappa\downarrow 0} \kappa^{-1}\left(v'(u+\kappa) - v'(u)\right) = v''(u)$ exists.
 
 Moreover, evaluating the derivative of $h'(v'(u))$, we have
 \begin{align}
 \label{EL eq}
 v''(u) &= \lambda [v'(u)(1-v'(u))] \exp\big\{ -\lambda\big(H(u) - v(u)\big)\big\} \geq 0
 \end{align}
 for $u\in (a+\delta, b)$.
 Therefore, $v$ must be convex on $[a+\delta,b]$.
 
 \vskip .1cm
 {\it Step 4.}  Consider the minimizers $\{v_{\lambda, \delta, \epsilon}\}_{\lambda\geq 1}$.  By convexity and equicontinuity of the family (as $\epsilon\leq v'_{\lambda, \delta, \epsilon}\leq 1-\epsilon$ a.e.), 
 let $v_*=v_{*,\delta,\epsilon}$ be a subsequential, uniformly converging limit point of $v_{\lambda,\delta, \epsilon}$ as $\lambda \uparrow\infty$.  The limit $v_*$ is also convex, continuous and bounded, and
we may arrange that $v'_*$ is the a.e. limit of $v'_{\lambda, \delta,\epsilon}$ on this subsequence. Note also that $0\leq v_*(a+\delta)\leq \min\{H(a+\delta),\delta\}$ (by the specification in the $\lambda$-Regularized Problem)
and $v'_*\in [\epsilon,1-\epsilon]$ a.e.

\vskip .1cm

{\it Step 5.} If $v_*(u) > H(u) + \kappa$ at some point $u\in (a+\delta, b)$ for $\kappa>0$, then $v_{\lambda, \delta,\epsilon}(u)> H(u) + \kappa/2$ for all large $\lambda$ in an interval about $u$. Then, by \eqref{EL eq}, $v''_{\lambda, \delta,\epsilon}(u)$ would diverge as $\lambda\uparrow\infty$ in an interval $(u, u+\upsilon)$ with $\upsilon>0$ small, a contradiction that $v'_*$ is bounded: Indeed, $1-\epsilon\geq v'_{\lambda, \delta, \epsilon}(u+\upsilon) - v'_{\lambda, \delta, \epsilon}(u) = \int_u^{u+\upsilon}v''_{\lambda, \delta, \epsilon}(s)ds$, but $v''_{\lambda, \delta, \epsilon}$ diverges as $\lambda\uparrow\infty$, contradicting that $1-\epsilon\geq v'_*(u+\upsilon) - v'_*(u)$.
 So, we may focus on when $v_*(u)\leq H(u)$ for all $u\in [a+\delta, b]$.

 On the other hand, if $v_*(u)\leq H(u)-\kappa$ for an $u\in [a+\delta, b]$ and so $v_*(u)\leq H(u)-\kappa/2$ in an interval containing $u$, then by \eqref{EL eq} we would conclude $\lim_{\lambda\uparrow\infty}v''_{\lambda, \delta, \epsilon}=0$ in this interval.  Therefore, in this case, $v_*$ is linear in this interval, and in particular $v''_*=0$ in this interval.
 
So, we are in the situation that $v_*$ is convex and $v_*(u) \leq H(u)$ for $u\in [a+\delta, b]$, and whenever $v_*(u)< H(u)$ in an interval that $v_*$ is linear in the interval.

\vskip .1cm
{\it Step 6.} When $v_*(a+\delta) < H(a+\delta)$, the optimizer $v_*$ must be linear on $[a+\delta, y_0]$ up to the point $y_0$ where it intersects the graph of $H(u)$, or if it doesn't intersect, then $y_0=b$.  If $y_0< b$, then by convexity of $v_*$, the line must be the tangent line to the graph of $H(u)$ going through the point $(a+\delta, v_*(a+\delta))$.  That is, $y_0$ satisfies
$
 H'(y_0)= \frac{H(y_0) - v_*(a+\delta)}{y_0-(a+\delta)}$.

\vskip .1cm
{\it Step 7.}
Again, by convexity, when $v_*(u_0)=H(u_0)$ at a point $u_0$, if the graph of $v_*$ on $[u_0, b]$ is not a tangent line,
the optimizer $v_*$ must equal $H$ for $u\in [u_0, z_0]$ up to some $u_0<z_0\leq b$, after which it dips below the graph of $H$.  If the point $z_0<b$, then $v_*$ must be linear on $[z_0,b]$ by the comments in Step 5. By convexity of $v_*$, the line must be a tangent line of $H$ with slope equal to $v'_*(z_0)=H'(z_0)$.

We have now described to an extent the structure of the optimizer 
$v_*=v_{*, \delta, \epsilon}$.  The initial value $v_*(a+\delta)$ and possible points $y_0$ and $z_0$ are still to be determined.

\vskip .1cm
{\it Step 8.}
For each $v_{*, \delta, \epsilon}$, continue it to $[a,b]$ by taking $v_{*, \delta, \epsilon}(u) = \min \{\frac{v_{*,\delta, \epsilon}(a+\delta)}{\delta}(u-a), H(u)\}$ for $u\in [a, a+\delta]$. 
Then, we have that $v_{*, \delta, \epsilon}(a)=0$ and, as $v_{*,\delta, \epsilon}(a+\delta)\leq \delta$, that $v'_{*, \delta, \epsilon}$ is a.e. bounded uniformly in $\delta$, $\epsilon$.

By equicontinuity of $\{v_{*, \delta, \epsilon}\}_{\delta, \epsilon>0}$, let $\bar v$ be a limit point of the family with respect to uniform convergence 
as $\epsilon\downarrow 0$ and then $\delta\downarrow 0$. The limit $\bar v$ is still convex, continuous and bounded, and we can arrange that $v'_{*, \delta, \epsilon}$ converges a.e to $\bar v'$.  

Given the form of the pre-limit functions $v_{*, \delta. \epsilon}$, the structure of $\bar v$ is that $\bar v(a)=0$ and $\bar v \leq H$ on $[a,b]$.  

Moreover, $\bar v$ may be initially linear up to a point $y_0$.  If $y_0<b$, then the line is a tangent line of $H$ with slope $H'(y_0)$; if $y_0=b$, then the line may be any line below the graph of $H$. 

If $y_0<b$, then $\bar v$ may follow the graph of $H$ on $[y_0, z_0]$, up to a point $z_0\geq y_0$.  

If $z_0<b$, then $\bar v$ will follow a tangent line of $H$ with slope $H'(z_0)$ on $[z_0,b]$; if $z_0=y_0$, $\bar v$ equals $H$ exactly at this one point.

The possible points $y_0$, $z_0$, or the slope of the line if $\bar v$ stays strictly below $H$ are to be determined.

\vskip .1cm

{\it Step 9.}
The strategy now is to 
see how the limit points $\bar v$ of the $\lambda$-Regularized Problem relate to the minimizer of Problem.
Indeed, we will show that $\bar v$ is a minimizer of the Problem and therefore, by uniqueness of solution to the Problem, is the unique limit point with respect to the $\lambda$-Regularized Problem.  

Let $w_*\leq H$ be admissible with respect to the Problem.  
Consider, for $\epsilon'>0$, 
$$w_{\delta, \epsilon'}(u)=\frac{1}{1+B\epsilon'}\big(w_*(u) +\epsilon' u).$$ 
We may choose $B=B_\delta$ so that
$w_{\delta, \epsilon'}(u)< H(u)$ for $u\in [a+\delta, b]$ and $w_{\delta, \epsilon'}(a+\delta)<\delta$.  Indeed,
\begin{align*}
H(u)-w_{\delta,\epsilon'}(u)&= \frac{1}{1+B\epsilon'}\big(H(u) - w_*(u)\big) + \frac{\epsilon'}{1+B\epsilon'}\big(BH(u) - u\big) \\
&\geq \frac{\epsilon'}{1+B\epsilon'}\big(BH(u) - u\big), \ \ {\rm and} \\
w_{\delta, \epsilon'}(a+ \delta) &\leq \frac{1}{1+B\epsilon'}\big(H(a+\delta) + \epsilon'(a+\delta)\big).
\end{align*}
We may choose $B$ large enough so that $BH(u)>u$ for $u\in [a+\delta, b]$ and $w_{\delta, \epsilon}(a+\delta) < \delta$.

By definition, $w'_{\delta, \epsilon'} \in [\tfrac{\epsilon'}{1+B\epsilon'}, \tfrac{1+\epsilon'}{1+B\epsilon'}]$.  When $\epsilon' = \tfrac{\epsilon}{1-B\epsilon}$, say for a given $\epsilon>0$, we have $w'_{\delta, \epsilon'}\geq \epsilon$. At the same time, as it can be arranged that $B\geq 2$,  
we have $w'_{\delta, \epsilon'}
 \leq 1-\epsilon$.  
Hence, $w_{\delta, \epsilon'}$, restricted to $u\in [a+\delta, b]$, is admissible in the constraints of the $\lambda$-Regularized Problem.  
\vskip .1cm

{\it Step 10.}
Then, as $v_{\lambda, \delta, \epsilon}$ is a minimizer of the $\lambda$-Regularized Problem,
\begin{align*}
&K_{a+\delta, b}(w_{\delta, \epsilon'}) + \int_{a+\delta}^{b} \exp\big\{ -\lambda\big(H(u) - w_{\delta, \epsilon'}(u)\big)\big\} du\\
&\geq K_{a+\delta, b}(v_{\lambda, \delta, \epsilon}) +\int_{a+\delta}^{b} \exp\big\{ -\lambda \big(H(u) - v_{\lambda, \delta, \epsilon}(u)\big)\big\}du
\ \geq \ K_{a+\delta, b}(v_{\lambda, \delta, \epsilon}).
\end{align*}
Since $w_{\delta, \epsilon'}(u)< H(u)$ and $v'_{\lambda, \delta, \epsilon} \rightarrow v'_{*,\delta, \epsilon}$ a.e. as $\lambda \rightarrow\infty$, by Fatou's lemma, we have as $\lambda\uparrow\infty$ that
$$K_{a+\delta, b}(w_{\delta, \epsilon'}) \geq K_{a+\delta, b}(v_{*,\delta, \epsilon}).$$

By the form of $w_{\delta, \epsilon'}$, the limit $\lim_{\delta, \epsilon\downarrow 0} K_{a+\delta, b}(w_{\delta, \epsilon'})=K_{a, b}(w_*)$.
Also, along a subsequence, as $\delta, \epsilon\downarrow 0$, with respect to a limit point $\bar v$ of $\{v_{*, \delta, \epsilon}\}$, we have by Fatou's lemma, that $\liminf_{\delta, \epsilon\downarrow 0}K_{a, b}(v_{*, \delta, \epsilon})$ is bounded below by $K_{a, b}(\bar v)$. 
Hence, $K_{a,b}(w_*)\geq K_{a, b}(\bar v)$.

Therefore, $\bar v$ is a minimizer and in fact the unique solution of the Problem, and  moreover is the unique limit point of the $\lambda$-Regularized Problem.

\vskip .1cm
{\it Step 11.}
 Since $\bar v$ is minimal with respect to the Problem, let us enumerate the possible structures for $\bar v$ 
as stated at the end of Step 8.  Given the forms of the cost $K_{a,b}$ and the convex function $H$, the following conclusions are now immediate.
 \smallskip
 
1. $H(u)\geq \rho u$ for $u\in [a,b]$. 
Here, $\bar v$ must be a line with smallest $K_{a,b}$ cost, namely the line with slope $\rho$ through $(a,0)$ with vanishing $K_{a,b}$ cost.
 
 2.  $H(a)=0$, $H'(a)<\rho$, and $z_\rho<b$.  Here, $\bar v$ follows the graph of $H$ up to point $z_\rho$ whereupon it follows the tangent line with slope $\rho$ to $b$.  The only other possibility is that $\bar v=0$ which has larger cost.
 
 3.  $H(a)=0$, $H'(b)\leq \rho$.  Here, $\bar v$ follows the graph of $H$ up to $b$.  Indeed, the slopes of $\bar v$ will be closer to $\rho$ than if $\bar v$ were linear with slope less than $H'(a)\leq \rho$, the only other possibility.

 4.  $H(a)>0$, $H'(a)<\rho$, and $y_{tan}<z_\rho<b$.  We claim that $\bar v$ is the tangent line up to $y_{tan}$ with slope $H'(y_{tan})$; then, it follows $H$ up to $z_\rho$, following after the tangent line with slope $\rho$ to $b$.  Indeed, $\bar v$ must be linear initially, and so is linear up to $y_{tan}$.  It is optimal now to follow the graph of $H$ up to $z_\rho$ as the slopes are closer to $\rho$ than if $\bar v$ were to continue on the tangent line with slope $H'(y_{tan})$.  Finally, departing on the tangent line with slope $H'(z_\rho)=\rho$ gives vanishing cost for the part of the integral $K_{a,b}$ from $z_\rho$ to $b$.
 \qed

\section{Proof of Proposition \ref{BD_LLN_prop}: LLNs for the tagged particle motion}
\label{LLN appendix}

We prove parts 1 and 2 in the next two subsections.

\subsection{Proof of Proposition \ref{BD_LLN_prop}}
When $A\geq 0$, from the current-tagged particle relation, we have
$
\{X_{Nt}\geq AN\} = \big\{ \frac{1}{N}\sum_{z\leq AN} \eta_{Nt}(z) \leq \frac{1}{N}\sum_{z\leq -1} \eta_0(z)\big\}$.
Since $\pi^N_t$ converges to $\rho(t, u)du$ in probability, we have
$$\lim_{N\rightarrow\infty}\P(X_{Nt}\geq AN) = \one\Big(\int_{-\infty}^A \rho(t, u)du \leq \int_{-\infty}^0\rho_0(u)du\Big).$$
When there is a
unique value $A=v_t\geq 0$ such that $\int_{-\infty}^A\rho(t,u)du = \int_{-\infty}^0\rho_0(u)du$, we conclude
$X_{Nt}/N$ converges to $v_t$ in probability.  

Similarly, when the unique value $A=v_t<0$, we use the relation $\{X_{Nt} < AN\} = \big\{\frac{1}{N}\sum_{z\leq AN} \eta_{Nt}(z) \leq \frac{1}{N}\sum_{z\leq -1}\eta_0(z)\big\}$ to deduce the convergence of $X_{Nt}/N$ to $v_t$.  \qed

\subsection{Proof of Proposition \ref{BD_LLN_prop1}}

We begin with a tightness result.

\begin{lem}
\label{tightness lemma}
Consider ASEP starting from a configuration $\eta_0$.  Then, all limit points in $D([0,1], \R)$ of the scaled positions $\{X_{Nt}/N\}_{t\in [0,1]}$ have continuous paths.  
\end{lem}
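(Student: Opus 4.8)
The plan is to lean on the semimartingale decomposition of the tagged particle established in Section~\ref{martingale section}. Rescaling $X_{Nt}=\int_0^{Nt}\{p(1-\eta_s(X_s+1))-q(1-\eta_s(X_s-1))\}\,ds+M_1(Nt)$, I would write $\tfrac{1}{N}X_{Nt}=a_N(t)+M_N(t)$, where $a_N(t)=\tfrac{1}{N}\int_0^{Nt}\{p(1-\eta_s(X_s+1))-q(1-\eta_s(X_s-1))\}\,ds$ is the finite-variation part and $M_N(t)=\tfrac{1}{N}M_1(Nt)$. The point is that $a_N$ is, uniformly in $N$ and in the trajectory, Lipschitz with constant $p+q$ and satisfies $a_N(0)=0$ — the integrand is bounded by $p+q$, since the tagged-particle jump rates in \eqref{joint} are at most $p$ and $q$ — while $M_N$ is uniformly negligible: from $\langle M_N\rangle_1=N^{-2}\langle M_1\rangle_N\le (p+q)/N$ and Doob's $L^2$ maximal inequality, $\E\big[\sup_{0\le t\le1}|M_N(t)|^2\big]\le 4\E\big[|M_N(1)|^2\big]=4\E[\langle M_N\rangle_1]\le 4(p+q)/N\to0$, so $\sup_{0\le t\le1}|M_N(t)|\to0$ in probability.

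Next I would record that each $a_N$ takes values in the compact set $\mathcal K=\{f\in C([0,1],\R):f(0)=0,\ |f(t)-f(s)|\le (p+q)|t-s|\}\subset D([0,1],\R)$, so the laws of $\{a_N\}$ form a tight family, and by Arzel\`a--Ascoli any weak limit point of $\{a_N\}$ is again supported on $\mathcal K$, hence on continuous trajectories.

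To conclude, I would argue by subsequences. Given any subsequence of $\{X_{N\cdot}/N\}$, tightness of $\{a_N\}$ provides a further subsequence along which $a_{N_k}\Rightarrow a$ in $C([0,1],\R)$ with $a$ valued in $\mathcal K$; by Skorohod's representation theorem I may take $a_{N_k}\to a$ almost surely uniformly, and after a further extraction also $\sup_{0\le t\le1}|M_{N_k}(t)|\to0$ almost surely. Then $X_{N_k\cdot}/N_k=a_{N_k}+M_{N_k}\to a$ almost surely uniformly, hence almost surely in the Skorohod topology, so $X_{N_k\cdot}/N_k\Rightarrow a$ in $D([0,1],\R)$ with $a$ continuous. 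Since every subsequence admits such a further subsequence, $\{X_{N\cdot}/N\}$ is tight in $D([0,1],\R)$ and every one of its limit points is the law of a process with continuous sample paths, which is the assertion.

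I do not expect a genuine obstacle here. The steps needing care are the topological bookkeeping in the last paragraph — transferring $C([0,1],\R)$-tightness of $a_N$ to $D([0,1],\R)$-tightness of $X_{N\cdot}/N$ and forcing all $D$-limit points onto $C([0,1],\R)$ via uniform closeness to $a_N$ — together with the (immediate) observation that the drift integrand along the realized trajectory is bounded by $p+q$ from \eqref{joint}.
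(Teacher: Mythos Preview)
Your argument is correct and rests on the same semimartingale decomposition from Section~\ref{martingale section} that the paper uses; the Lipschitz bound on the drift and the $O(N^{-1})$ bound on $\langle M_N\rangle_1$ are exactly the estimates the paper records. The only difference is in the final packaging: the paper combines the two pieces into a single increment moment bound
\[
\frac{1}{N^2}\,\E\big[(X_{Nt}-X_{Ns})^2\big]\le C|t-s|^2+\frac{C}{N}|t-s|
\]
(the first term from the Lipschitz drift, the second from the martingale quadratic variation) and then invokes ``standard arguments'' for $C$-tightness, whereas you keep the two pieces separate and conclude via Arzel\`a--Ascoli on $a_N$ together with a Slutsky/Skorohod step. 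Your route is a bit more explicit and self-contained; the paper's is terser. One small point of care in your write-up: when you invoke Skorohod's representation you should represent the \emph{joint} law of $(a_{N_k},M_{N_k})$ (or of $(a_{N_k},\sup_t|M_{N_k}(t)|)$) so that the relation $X_{N_k\cdot}/N_k=a_{N_k}+M_{N_k}$ survives on the new probability space; alternatively, bypass Skorohod entirely by noting that $d_{\mathrm{Sk}}(X_{N\cdot}/N,a_N)\le\sup_t|M_N(t)|\to0$ in probability and applying the Slutsky-type lemma directly.
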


\begin{proof}
By the development in Section \ref{martingale section},
we have
$$X_t - \int_0^t \left\{p\left(1-\eta(X_r+1)\right) - q\left(1-\eta(X_r-1)\right) \right\}dr = M_1(t),$$
is a martingale whose quadratic variation satisfies
$E[M_1^2(t)] = O(t)$.

Then, one computes that
$$\frac{1}{N^2}E[(X_{Nt} - X_{Ns})^2] \leq C|t-s|^2 + \frac{C}{N}|t-s|.$$
Hence, by standard arguments,
 all limit points of $X_{N\cdot}/N$ in $D([0,1], \R)$ have continuous paths in $D([0,1], \R)$.
\end{proof}

\begin{rem}
\label{rem-A1}\rm
Then, to identify the uniform limit of $X_{N\cdot}/N$ as $v_\cdot \in C([0,1])$, that is $\lim_{N\rightarrow\infty}\sup_{t\in [0,1]}|X_{Nt}/N - v_t| =0$, it would be enough to establish for almost all $t\in [0,1]$ that
$X_{Nt}/N \rightarrow v_t$
in probability as $N\rightarrow \infty$.
\end{rem}

\subsection{Conclusion of proof of Part 1}
\label{first proof section}
By Remark \ref{rem-A1}, to complete the proof it is sufficient to identify $v_t\equiv \gamma t$. Although such an identification follows from the one time $t$-LLN implicit in the fluctuation result in \cite{TW}, we supply a different, self-contained proof of this identification for the interested reader.

Consider a labeling of the particles in $\eta_{step}$ going backwards: The tagged particle is the $0$th particle, the ones behind it are the $1$st, $2$nd, etc. particles.   We now view the system in terms of the gaps between particles, a zero-range process $\zeta_t$ with an infinite reservoir at site $0$, where
$\zeta_t(i)$ is the gap between the $i$th and $i-1$st particles for $i\geq 1$.  The generator is 
\begin{align*}
	\mathcal{L}_{ZR}f(\zeta) 
	&=  p \left[ f(\zeta^{0,1}) - f(\zeta) \right] \\
&\ \ \ \ \ \ 	+ \sum_{x\ge1}\one({\{\zeta(x) \ge 1\}}) \Big\{  p\left[f(\zeta^{x,x+1}) - f(\zeta) \right]  + q\left[ f(\zeta^{x,x-1}) - f(\zeta) \right] \Big\},
\end{align*}
where $\zeta^{x,y}$ for $x,y\geq 1$ corresponds to the update of a zero-range particle at $x$ moving to $y$, and $\eta^{0,1}$, $\eta^{1,0}$ are updates which adds to or takes away a zero-range particle from location $1$.
See also \cite{CS} which considers tagged particle asymptotics when $\gamma<0$ using the zero-range mapping, as well as anomalous behaviors when $\gamma=0$.

Since $p>q$, there are more particles entering than leaving the system.  We expect the zero-range system to be transient.  Consider the compactification by including the point `$\infty$' as a value for the zero-range particle number at a site.  In this sense, the system as a Markov process on the compact space $\big[\{0,1,2,\ldots\}\cup \{\infty\}\big]^\N$, where if a coordinate $\zeta_s(x)=\infty$ at time $s$, then at later times, the coordinate remains at $\infty$.  Since the state space is compact, this extended chain possesses invariant measures. 

Any such invariant measure $\mu$ must satisfy, for $k\geq 2$,
\begin{align*}
0&= E_\mu \mathcal{L}_{ZR}\zeta(1) = p + 	q\mu(\zeta(2)\geq 1) - \mu(\zeta(1)\geq 1)\\
0& = E_\mu\mathcal{L}_{ZR}\zeta(k) = p\mu(\zeta(k-1)\geq 1) + q\mu(\zeta(k+1)\geq 1) - \mu(\zeta(k)\geq 1).
\end{align*}
Let $\mu(\zeta(k)\geq 1) = \alpha_k$ for $k\geq 1$.  Then, $p(1-\alpha_1) = q(\alpha_1-\alpha_2)$ and $p(\alpha_{k-1} -\alpha_k) = q(\alpha_k-\alpha_{k+1})$ for $k\geq 2$.  Hence,
$\alpha_k -\alpha_{k+1} = \big({p}/{q}\big)^k(1-\alpha_1)$
for $k\geq 1$.  Since $p>q$ and $\max_k\alpha_k\leq 1$, by taking the limit as $k\uparrow\infty$ we have $\alpha_1=1$, which implies that $\alpha_k=1$ for $k\geq 1$.  In particular, any invariant measure $\mu$ is such that $\mu(\zeta(1)=0)=0$.

By considering subsequences of the distributions $t^{-1}\int_0^t \P_{\zeta_0}(\zeta_s\in \cdot)ds$, we have
\begin{align}
\label{step invariant}
\limsup_{t\uparrow\infty} \frac{1}{t}\int_0^t \P_{\zeta_0}(\zeta_s(1)=0)ds \leq \max_\mu\mu(\zeta(1)=0) =0.
\end{align}
 Consider now, starting from the step profile, that
$M_1(t) = X_t - \int_0^t \gamma +q\eta_s(X_s -1)ds$
is a martingale with quadratic variation $\E_{\eta_{step}}[M_1^2(t)] = \E_{\eta_{step}}\big[\int_0^t 1 - q\eta_s(X_s -1)ds\big] \leq t$ (cf. Section \ref{martingale section}).
We see then that 
\begin{align}
\label{ldp 1}
&\lim_{t\rightarrow\infty}\frac{1}{t}X_t = \gamma \ \ {\rm in \ probability\ } \  \Longleftrightarrow \ \lim_{t\rightarrow\infty}\frac{1}{t}\E_{\eta_{step}}\left[\int_0^t \eta_s(X_s-1)ds\right] = 0.
\end{align}
The meaning of the last limit is that the tagged particle does not meet often the particle behind it with respect to law of large numbers scaling.
Hence, with respect to the exclusion particles-zero-range gaps mapping, as $\eta_s(X_s -1) = \one(\zeta_s(1)=0)$, we conclude by \eqref{step invariant} and \eqref{ldp 1} that
$\lim_{N\rightarrow\infty}X_{Nt}/N =\gamma t$ in probability. \qed

\subsection{Conclusion of proof of Part 2}
By Remark \ref{rem-A1}, we need only identify $v_t\equiv \gamma t$.  We will 
make use of the `shock fluctuation' coupling results in \cite{Ferrari}; see also \cite{FFV} in this context with respect to TASEP.  

\smallskip
{\it Step 1.} One of the results in \cite{Ferrari} considers a layer of say basement class particles initially distributed according to $\prod_{x\in \Z}{\rm Bern}(a)$.  On top of this layer, we put say class $0^*$ particles initially distributed according to $\prod_{x<0}{\rm Bern}(\alpha)\times \prod_{x\geq 0}{\rm Bern}(0)$ and say class $0^\dag$ particles initially distributed according to $\prod_{x<0}{\rm Bern}(0)\times\prod_{x\geq 0}{\rm Bern}(\alpha)$.

In the ASEP evolution, the class $0^*$ particles are set to be `second-class' to the class $0^\dag$ particles, which in turn are taken `second-class' to the basement class particles, more formally stated through the basic coupling.  At time $t$, let $Z^0_t$ be the position of the right-most class $0^*$ particle, and let $W^0_t$ be that of the left-most class $0^\dag$ particle.  Then, in \cite{Ferrari}, it is shown that both $Z^0_{Nt}/N$ and $W^0_{Nt}/N$ limit in probability to $\gamma(1-\alpha-a)t$.  We will call this the `1-layer' phenomena.  We comment, if the density $a=0$, then $W^0_t$ is in fact the position of the left-most `first-class' or tagged particle starting under distribution $\prod_{x<0}{\rm Bern}(0)\times\prod_{x\geq 0}{\rm Bern}(\alpha)$.

One may stack another layer on top of the class $0$ layer:  We may distribute initially class $1^*$ particles according to $\prod_{x< b N}{\rm Bern}(\beta)\times \prod_{x\geq bN}{\rm Bern}(0)$ and class $1^\dag$ particles according to $\prod_{x<bN}{\rm Bern}(0)\times\prod_{x\geq b N}{\rm Bern}(\beta)$.  The value $b$ will be called a `shock' location.  Then, in the ASEP motion, we set the class $1^*$ particles as `second-class' to the class $1^\dag$ ones, which we set as `second-class' to both class $0$, as well as basement class particles.  Let $Z^2_t$ be the right-most position of the class $1^*$ particles, and $W^2_t$ be the left-most position of the class $1^\dag$ particles at time $t$.  Since the class $1$ particles are `second-class' to the class $0$ and basement class ones, considering together the class $0$ and the basement class particles as one layer, the `1-layer' phenomena in \cite{Ferrari} gives both $Z^1_{Nt}/N$ and $W^1_{Nt}/N$ converge to $b + \gamma(1-(\alpha+a)-(\alpha+a+\beta))t$.  These veloicites correspond to the Rankine-Hugoniot speed of the shock $\gamma(R(1-R) - L(1-L))/(R-L) = 1-L-R$ with respect to a Riemann step initial condition with left and right densities $L$ and $R$ respectively. 

In this way, one may stack several layers, and deduce LLNs for the right-most and left-most particles in each layer, subordinate to particles in layers beneath it.

\vskip .1cm
{\it Step 2.}  Consider now the position $X_t$ of the tagged particle.  By coupling, it is less than the position of a particle in a system with no other particles, and therefore $\limsup_{N\uparrow\infty}X_{Nt}/N$ is less than $\gamma t$ for $t\geq 0$.  

To show the reverse inequality, consider a `steps' profile:
\begin{align}
\label{steps profile}
\rho_1(u) = \left\{\begin{array}{ll}
0& {\rm   for \ }u\leq 0\\
\epsilon& {\rm   for \ }0<u\leq 2\gamma \epsilon\\
\epsilon + \delta_1& {\rm   for \ }2\gamma\epsilon <u\leq 2\gamma(\epsilon+\delta_1)\\
\vdots&\vdots\\
\epsilon + \sum_{r=1}^{k}\delta_r & {\rm   for \ }2\gamma(\epsilon + \sum_{r=1}^{k-1}\delta_{r})<u\leq 2\gamma \rho\\
\rho& {\rm   for \ }u>2\gamma\rho.
\end{array}\right.
\end{align}
Here, $\epsilon<\rho$, $\delta_r = \kappa \delta_{r-1}$ for $2\leq r\leq k$ and $0<\kappa<1$.  We may take the value of $\delta_1 \leq \epsilon/2$, and $\kappa$ and $k$ 
so that $\epsilon + \delta_1\sum_{r=0}^{k-1} \kappa^r = \rho$.

Such an initial profile $\rho_1$ is larger than $\rho_0$, given in the statement of Proposition \ref{BD_LLN_prop1}.  Hence, by comparison coupling, the tagged motion $Y_t$, starting under $\nu_{\rho_1(\cdot)}(\cdot|\eta(0)=1)$, at time $t$ will be less than $X_t$.  We will show that $\lim_{N\rightarrow\infty} Y_{Nt}/N = \gamma(1-\epsilon)t$.  As $\epsilon>0$ is arbitrary, this will finish the proof.

 The idea is that the LLN behavior of $Y_t$ is the same as that of the tagged particle starting under distribution $\prod_{x<0}{\rm Bern}(0)\times \prod_{x\geq 0}{\rm Bern}(\epsilon)$, for which a LLN is known by the discussion in Step 1.

\vskip .1cm
{\it Step 3.}   Consider $k+1$ layers of `second-class' particles, as in Step 1:  Taking the density of the basement class layer to be $a=0$, let the bottom class $0$ (layer $0$) density correspond to $\alpha = \epsilon$.  Let the next layer $1$ correspond to $\beta = \delta_1$ and $b = 2\gamma\epsilon$.  Then, put layer $r$ on top of layer $r-1$ with density $\delta_r$ and initial macroscopic `shock' location $2\gamma(\epsilon + \sum_{j=1}^{r-1}\delta_j)$ for $2\leq r\leq k$.  

In layer $0\leq r\leq k$, for each $0\leq s\leq 1$, the right-most $r^*$ and left-most $r^\dag$ particle scaled positions $Z^r_{Ns}/N$ and $W^r_{Ns}/N$ converge in probability to
\begin{align}
\label{velocities}
\begin{array}{ll}
\gamma(1-\epsilon)s & {\rm   for \ }r=0\\
2\gamma\epsilon + \gamma(1-2\epsilon - \delta_1)s & {\rm   for \ }r=1\\
2\gamma(\epsilon + \sum_{j=1}^{r-1} \delta_j) + \gamma(1-2\epsilon-2\sum_{j=1}^{r-1}\delta_j-\delta_r)s& {\rm   for \ }2\leq r\leq k,
\end{array}
\end{align}
noting the left and right densities $L$ and $R$ given in \eqref{steps profile} over the layers.
For each $0\leq s\leq 1$, these positions are strictly increasing.  Also, at time $s=1$, these scaled positions reduce to $\gamma(1-\delta_r)$ for $0\leq r\leq k$, if we call $\delta_0 = \epsilon$.

By the argument of Lemma \ref{tightness lemma}, since compensators of the jumps of $Z^r_{Ns}$ and $W^r_{Ns}$ are uniformly bounded $O(N)$, one can conclude $Z^r_{Ns}/N$ and $W^r_{Ns}/N$ approximate the positions \eqref{velocities} 
uniformly in $0\leq s\leq 1$ for all large $N$, with high probability.

Hence, since the velocities are strictly ordered over the layers, $\dag$ particles to the right of positions $\{W^r_{Ns}\}_{s\in [0,1]}$ do not interact with the higher order $*$ particles to the left of $\{Z^q_{Ns}\}_{s\in [0,1]}$ for $q\leq r-1$, with high probability.  Also, by construction, $*$ particles in layer $r$ do not interact with $\dag$ ones in layers $q\geq r$.

Therefore, the scaled limits of $\{W^r_{Ns}/N\}_{s\in [0,1]}$, over $0\leq r\leq k$, are the same as if there were no $*$ `second-class' particles in any layer in the system.

\vskip .1cm
{\it Step 4.}  But, we may consider the system starting from the `steps' profile \eqref{steps profile}.  We may view the evolution of it in terms of the basic coupling where there is a bottom layer initially distributed according to $\prod_{x<0}{\rm Bern}(0)\times\prod_{x\geq 0}{\rm Bern}(\epsilon)$, a next layer subordinate to it distributed according to $\prod_{x<2\gamma\epsilon N}{\rm Bern}(0)\times\prod_{x\geq 2\gamma \epsilon N} {\rm Bern}(\delta_1)$, and so on to a final layer subordinate to the layers below with distribution $\prod_{x<2\gamma(\epsilon + \delta_1+\cdots + \delta_{k-1})N}{\rm Bern}(0)\times \prod_{x\geq 2\gamma(\epsilon + \delta_1+\cdots+\delta_{k-1})N} {\rm Bern}(\delta_{k})$.

The left-most second-class particles in these layers behave as $\{W^r_{Ns}\}_{s\in [0,1]}$ in the LLN scale from the discussion in Step 3.  Moreover, the bottom left-most particle $\{W^0_{Ns}\}_{s\in [0,1]}$ in the system does not see the second-class particles above it, with high probability.  Hence, $\{Y_{Ns}\}_{s\in [0,1]}$, the left-most particle in the system without any second-class particles, has the same LLN behavior as $\{W^0_{Ns}\}_{s\in [0,1]}$, which has velocity $\gamma(1-\epsilon)$ as desired.
\qed

\section{Proof of Proposition \ref{BD_LDP_prop}: Large deviation bound from step profile}
\label{LDP section}

Consider ASEP, with $0<\gamma \leq 1$, starting from the step initial profile $\eta_{step}$.  We will view it in terms of the associated zero-range system of gaps between particles discussed in Section \ref{first proof section}.
Since the zero-range rate $g(k) = \one(k\geq 1)$ is increasing, the system is `attractive' in that it satisfies the basic coupling.  Hence, starting from a FKG measure, such as the product of point masses consisting of the initial step condition, the system has positive correlations; see Liggett \cite{Liggett} for a discussion of the basic coupling, FKG measures and positive correlation.

The position $X_t$ of the tagged particle is the total sum $\sum_{j\geq 1} \zeta_t(j)$ of zero-range particles at time $t$, in other words the integrated current across the reservoir.  Here, the variable $1-\eta_s(X_s-1) = \one(\zeta_s(1)\geq 1)$.  
The last two functions of $\zeta$ are increasing and therefore are positively correlated.  In particular, the functions $\eta_s(X_s-1)= \one(\zeta_s(1)=0)$ and $\sum_{i\geq 1}\zeta_t(i)$, as the first is decreasing and the second is increasing, are negatively correlated.

By Proposition \ref{BD_LLN_prop1}, one has $X_t/t \rightarrow \gamma$ in probability, and so, from negative correlations and \eqref{ldp 1}, we conclude
\begin{align}
\label{neg corr relation}
\frac{1}{t}\E_{\eta_{step}}\Big[\int_0^t \one(\eta_s(X_s -1)=1) ds\big |X_t>At\Big]  \leq \frac{1}{t}\E_{\eta_{step}}\Big[\int_0^t \one(\eta_s(X_s-1)=1) ds\Big] \rightarrow 0. 
\end{align}

Consider the exponential martingale (cf. Section \ref{martingale section}), for $\lambda \in \R$,
$$M_2(t)=\exp\left\{\lambda X_t - tp(e^\lambda -1) -q(e^{-\lambda}-1)\int_0^t (1-\eta_s(X_s-1))ds\right\}.$$
Write
\begin{align*}
\frac{1}{t}\log \P_{\eta_{step}}(X_t>At) & = \frac{1}{t}\log \P_{\eta_{step}}\left(\int_0^t \eta_s(X_s -1)ds \leq \epsilon t, X_t>At\right)\\
& \ \ + \frac{1}{t}\log\left[ 1 +\frac{\P_{\eta_{step}}(\int_0^t \eta_s(X_s -1)ds > \epsilon t, X_t>At)}{\P_{\eta_{step}}(\int_0^t \eta_s(X_s -1)ds \leq \epsilon t, X_t>At)}\right].
\end{align*}
The second term vanishes according to \eqref{neg corr relation} for each $\epsilon>0$.  However, analysis of the first term, using Chebychev's inequality, the exponential martingale, optimization over $\lambda>0$, and taking $\epsilon\downarrow 0$,
yields a `birth-death' process bound:  Indeed, we arrive at the bound $\sup_{\lambda>0}\left\{-A + p(e^\lambda -1) + q(e^{-\lambda} -1)\right\}$.  When $A>\gamma$, one can solve that the supremum is achieved at a unique $\lambda>0$ where $pe^\lambda -qe^{-\lambda} = A$.  Hence, with $c=e^\lambda$, we have
$$\limsup_{t\rightarrow\infty}\frac{1}{t}\log \P_{\eta_{step}}(X_t>At) \leq 
-A\log c + pc +q/c -1= -\I^Z(A)$$
for $c=\big(A + \sqrt{A^2 + 4pq}\big)/2p$.  For $A=\gamma$, as $-\I^Z(A)=0$, the bound holds trivially.  \qed

 \section{Proof of Lemma \ref{superexponential cost lemma}: Superexponential bound for the flux}
\label{section superexponential}

 Recall $\rho_0(u)=0$ for $|u|>R$ with $R$ large and, in term of the entropic solution of \eqref{Burgers}, 
$v(t,u) = \int_{-\infty}^u \rho(t, z)dz$.  
The Hopf-Lax formula \eqref{Hopf-Lax formula} for $v(t,u)$ with respect to $G$ (cf. \eqref{G-def-intro}) is written
 \begin{align}
 \label{U_0 eq}
 v(t,u) = \sup_y \left\{v(0,y) - tG\left(\frac{y-u}{t}\right)\right\} = v(0, b(t,u)) - tG\left(\frac{b(t,u)-u}{t}\right),
 \end{align}
 in terms of an argmax $y=b(t,u)$.  Indeed, $v(0,y) - tG\left(\frac{y-u}{t}\right) = v(0,y)-(y-u)$, for $y-u>t\gamma$, decreases as $y$ increases.  Also, $v(0,y) - tG\left(\frac{y-u}{t}\right) = v(0,y)$, for $y-u< -t\gamma$, decreases as $y$ decreases.  Hence, a maximum $y=b(t,u)$ is achieved.
 
 Suppose the initial condition $\eta_0$ is such that $\eta_0(x)=0$ for $|x|>RN$, and $\pi^N_0\in B_\delta(\rho_0)$ for $\delta>0$, as specified in Lemma \ref{superexponential cost lemma}.
 Consider, with respect to the exclusion generator $N\mathcal{L}$ (cf. \eqref{exclusion gen}) for the speeded up process, and a test function $J=J(t,u)$, the stochastic differential and martingale $M(t)$, 
 \begin{align}
 \label{M_J}
 \frac{1}{N}\sum_{x\in \Z} J(t,\frac{x}{N})\eta_{Nt}(x) - \frac{1}{N}\sum_{x\in \Z} J(0,\frac{x}{N})\eta_0(x)
 = \int_0^t \frac{1}{N}\sum_{x\in \Z} \partial_t J(s, \frac{x}{N})\eta_{Ns}(x) ds&\\
\ \  + \frac{1}{N}\sum_{\pm}\int_0^t N p(\pm 1)\sum_{x\in \Z} \big(J(s,\frac{x\pm 1}{N})-J(s,\frac{x}{N})\big)\eta_{Ns}(x)(1-\eta_{Ns}(x\pm 1)) ds + M(t).&\nonumber
 \end{align}

Define the function $F_0$ by
$$F_0(u)=\left\{\begin{array}{ll}
1& {\rm for \ }u\leq 0\\
0&{\rm for \ }u\geq \delta\\
\delta^{-1}(u-\delta) &{\rm for \ } 0\leq u \leq \delta.
\end{array}\right.
$$
Let $F$ be the mollification of $F_0$ by a smooth function with support in $[-\delta/10, \delta/10]$, say.
We now take the test function $J$, with respect to $L$ and $b=b(t,L)$, in the form
$$J(s,u) = F\left(u-b - \frac{s}{t}(L-b)\right).$$
Such a test function is allowed as $\eta_0(x)= 0$ for $|x|>RN$.
Then,
$J(t,u) = F(u-L) \ \ {\rm and \ \ } J(0,u) = F(u-b)$.

Substituting into \eqref{M_J}, we have
\begin{align*}
&\frac{1}{N} \sum_{x\in \Z} F\left(\frac{x}{N} - L\right)\eta_{Nt}(x) - \frac{1}{N}\sum_{x\in \Z} F\left(\frac{x}{N}-b\right)\eta_0(x)\\
&= \int_0^t \frac{1}{N}\sum_{x\in \Z} F'\left(\frac{x}{N}-b - \frac{s}{t}(L-b)\right)\\
&\ \ \ \ \ \ \ \ \ \ \ \times \Big[ -\frac{L-b}{t}\eta_{Ns}(x) +\sum_{\pm} \pm p(\pm 1)\eta_{Ns}(x)(1-\eta_{Ns}(x\pm 1))\Big]ds \\
&\ \ \ \ + D_1(t) + M(t)\\
&= \int_0^t \frac{1}{N}\sum_{x\in \Z} F'\left(\frac{x}{N}-b-\frac{s}{t}(L-b)\right)\left[-\frac{L-b}{t}\eta^{k}_{Ns}(x) + \gamma\eta^k_{Ns}(x)(1-\eta^k_{Ns}(x))\right] ds\\
&\ \ \ \  + D_2(t) + D_1(t) + M(t).
\end{align*}
Here,
\begin{align*}
D_1(t) &= \sum_{\pm}\int_0^t \frac{p(\pm 1)}{N}\sum_{x\in \Z} N\left(J(s,\frac{x\pm 1}{N}) -J(s,\frac{x}{N})\mp \partial_u J(s,\frac{x}{N})\right)\\
&\ \ \ \ \ \ \ \ \ \ \ \ \ \ \ \ \ \ \times \eta_{Ns}(x)(1-\eta_{Ns}(x\pm 1))ds,\\
\eta^k_{Ns}(x) &= \frac{1}{2k+1}\sum_{z: |z-x|\leq k} \eta_{Ns}(z)
\ \ {\rm and} \\
D_2(t) &= \int_0^t \Big\{\frac{1}{N}\sum_{x\in \Z} F'\left(\frac{x}{N}-b - \frac{s}{t}(L-b)\right)\left[\frac{L-b}{t}\big(\eta^k_{Ns}(x) -  \eta_{Ns}(x)\big) \right]\\
&\ \ \ \ +  \sum_\pm \pm p(\pm 1)\left[\eta_{Ns}(x)(1-\eta_{Ns}(x\pm 1)) - \eta^k_{Ns}(x)(1-\eta^k_{Ns}(x))\right]\Big\} ds.
\end{align*}

 Now, by \eqref{hopf 1},
 $
 \sup_{0\leq w\leq 1} \left\{ - \frac{L-b}{t}w + \gamma w(1-w)\right\} 
 = G\left(\frac{b-L}{t}\right)$.
 Hence, as $F'\leq 0$,
 \begin{align*}
& \frac{1}{N}\sum_{x\in \Z} F\left(\frac{x}{N} -L\right)\eta_{Nt}(x) - \frac{1}{N}\sum_{x\in \Z} F\left(\frac{x}{N}-b\right)\eta_0(x)\\
&\geq G\left(\frac{b-L}{t}\right)\int_0^t \frac{1}{N} \sum_{x\in \Z} F'\left(\frac{x}{N} -b - \frac{s}{t}(L-b)\right) ds + D_2(t)+D_1(t)+M(t).
\end{align*}

Since
$F'\left(\frac{x}{N} - b - \frac{s}{t}(L-b)\right) \cdot \frac{b-L}{t} = \partial_s F\left(\frac{x}{N} -b - \frac{s}{t}(L-b)\right)$,
we have
\begin{align*}
\int_0^t \frac{1}{N}\sum_{x\in \Z} F'\left(\frac{x}{N} -b - \frac{s}{t}(L-b)\right)ds
= \frac{t}{N(b-L)} \sum_{x\in \Z} \left[F\left(\frac{x}{N}-L\right) - F\left(\frac{x}{N}-b\right)\right].
\end{align*}
Then,
   \begin{align}
   \label{super 1}
& \frac{1}{N}\sum_{x\in \Z} F\left(\frac{x}{N} -L\right)\eta_{Nt}(x) \\
 &\geq \frac{1}{N}\sum_{x\in \Z} F\left(\frac{x}{N}-b\right)\eta_0(x)-  tG\left(\frac{b-L}{t}\right)  + D_3(t)+ D_2(t)+D_1(t)+M(t)\nonumber
 \end{align}
 where
 $D_3(t) = tG\left(\frac{b-L}{t}\right)\left[ 1+\frac{1}{N(b-L)} \sum_{x\in \Z} \left[F\left(\frac{x}{N}-L\right) - F\left(\frac{x}{N}-b\right)\right]\right]$.
 
 Note that
 \begin{align}
\label{9-1}
\frac{1}{N}\sum_{x\in \Z} F\left(\frac{x}{N} -L \right)\eta_{Nt}(x) &= \frac{1}{N}\sum_{x\leq LN} \eta_{Nt}(x) + O\Big(\frac{1}{\delta N}\sum_{0\leq z\leq \delta N}\frac{z}{N}\Big)\nonumber\\
& = \frac{1}{N}\sum_{x\leq LN} \eta_{Nt}(x) + O(\delta).
\end{align}
Similarly, via the assumption $\pi^N_0\in B_\delta(\rho_0)$ on the initial condition,
 \begin{align}
\label{9-2}
\frac{1}{N}\sum_{x\in \Z} F\left(\frac{x}{N}-b\right)\eta_0(x) = \frac{1}{N}\sum_{x\leq bN}\eta_0(x) + O(\delta) = v(0,b) + O(\delta).
\end{align}

Therefore, from \eqref{U_0 eq}, \eqref{9-1}, \eqref{9-2}, plugging into \eqref{super 1}, we have
that
\begin{align}
\label{super_last}
\frac{1}{N}\sum_{x\leq LN} \eta_{Nt}(x) &\geq  v(0,b) - tG\left(\frac{b-L}{t}\right) + O(\delta) +D_4(t)\nonumber\\
&= v(t, L) + O(\delta) + D_4(t),
\end{align}
where $D_4(t) = \sum_{i=1}^3 D_i(t) + M(t)$.
 
 We now bound the errors and therefore $D_4(t)$.  Uniformly over $\eta_\cdot$: (1) By smoothness of $F$, $|D_1(t)| = O(1/N)$, and (2) by calculations as in \eqref{9-1}, \eqref{9-2}, $|D_3(t)| = O(\delta)$.  Then,
 $\lim_{\delta\downarrow 0}\lim_{N\rightarrow\infty}\sup_{\eta_\cdot} |D_1(t) + D_3(t)| =0$.
 
  A uniform superexponential bound for the `$1$-block' is well known: 
  
  \noindent 
 $\lim_{k\uparrow\infty}\lim_{N\rightarrow\infty}\frac{1}{N}\log  \sup_{\eta_0}\P_{\eta_0}(|D_2(t)|>\delta) = -\infty$, where the supremum is over $\eta_0$ supported on $[-RN, RN]$.  Indeed, the Radon-Nikodym derivative in the change of measure from $\eta_0$ to $\nu_{\rho_0(\cdot)}$ is uniformly bounded of order $e^{CN}$ where $C=C_{R, \rho_0(\cdot)}$ is a constant.  One now follows the standard route in \cite{kl}[Chapter 10]; see also \cite{Jensen_thesis}[Lemma 2.4]. 
 
Also, a uniform martingale bound is known:
 \begin{align}
 \label{unif mart}
 \lim_{N\rightarrow\infty} \frac{1}{N}\log \sup_{\eta_0}\P_{\eta_0}\big(|M(t)|>\delta \big) = -\infty.
 \end{align}
 Indeed, we adapt the argument in \cite{Jensen_thesis}[Lemma 2.2] to ASEP, which is uniform over $\eta_0$ supported on $[-RN, RN]$: Note that 
$Z_t^N=\exp\big\{f(t, \eta_{Nt}) - f(0, \eta_0) - \int_0^t e^{-f}\big(\partial_s + N\mathcal{L}\big)e^fds\big\}$ is a martingale, and 
$e^{-f}\big(\partial_s + N\mathcal{L}\big)e^f = (\partial_s + N\mathcal{L})f + O\big(R\|J'(s, \cdot)\|^2_{L^\infty}\big)$
 when $f(t, \eta) = \sum_x J(t, x/N)\eta(x)$.  Hence, $Z^N_t = \exp\{NM(t) +O\big(R\|J'(s, \cdot)\|^2_{L^\infty}\big)\}$ and for $\lambda>0$, $E_{\eta_0}\big[\exp\big\{\pm N\lambda M(t)\big\}\big] \leq e^{C_{R,J} \lambda^2}$, as $\pm \lambda M(t)$ corresponds to $J$ being replaced by $\pm \lambda J$. By Chebychev's inequality, $P_{\eta_0}(\pm M(t)>\delta) \leq e^{C_{R,J}\lambda^2} e^{-N\lambda\delta}$, and \eqref{unif mart} follows by taking $N\uparrow\infty$ and then $\lambda\uparrow\infty$.

Hence, via the bound of $D_4(t)$, and inequality \eqref{super_last}, with $\delta = \delta(\varepsilon)>0$ small enough, the left side of \eqref{super exp statement}
is less than $\lim_{N\rightarrow\infty} \frac{1}{N}\log \sup_{\eta_0}\P_{\eta_0}\left(-\varepsilon \geq O(\delta)+ D_4(t)\right) = -\infty$.
\qed

\appendix
 
\section{Hopf-Lax variational formulation}
\label{Hopf Lax section}
We now state a Hopf-Lax variational form associated to the entropic solution of \eqref{Burgers} (cf. \cite{Evans}[Section 3.3]).  Suppose $\rho_0(u) = 0$ for $|u|> \z$, some $\z$ large.  
 For $t>0$, let
$$v(t,u) = \int_{-\infty}^u \rho(t, z)dz$$
and note that
$\partial_t v + \gamma(\partial_u v) (1-\partial_u v) = 0$.

Form $w(t,u) = u-v(t,u)$ and note  
\begin{align*}
\partial_t w = -\partial_t v
 = \gamma(\partial_u v)(1-\partial_u v) = \gamma(1-\partial_u w)(\partial_u w).
 \end{align*} 
 Since $0\leq \partial_u w = 1-\rho(t,u)\leq 1$, in terms of the extended convex function, 
 $$L(r) = \left\{\begin{array}{ll}
 -\gamma r(1-r)& {\rm for \ }0\leq r\leq 1\\
 \infty&{\rm for \ }r<0 {\rm \ or \ } r>1,
 \end{array}\right.
 $$
 we have
$\partial_t w + L(\partial_u w) = 0$.

Observe that the convex conjugate of $L(r)$ is calculated as
\begin{align}
\label{hopf 1}
G(z) &= \sup_{0\leq y\leq 1}\big\{yz - L(y)\big\} =\sup_y\big\{yz - L(y)\big\} \\
&= \left\{\begin{array}{ll}
\frac{\gamma}{4}\big(1+\frac{z}{\gamma}\big)^2& {\rm for \ }|z|\leq \gamma\\
0& {\rm for \ }z<-\gamma\\
z&{\rm for \ }z>\gamma.
\end{array}\right. \nonumber
\end{align}
Moreover, for $0\leq r \leq 1$, we have
$
L(r)=
 \sup_{-\gamma \leq y\leq \gamma} \big\{r y - G(y)\big\}
 = \sup_y \big\{r y - G(y)\big\}
$.

Then, by the Hopf-Lax formula,
$$w(t,u) = \inf_y \left\{ t G\left(\frac{u-y}{t}\right) + w(0, y)\right\}$$
and hence
$$u-v(t,u) = \inf_y \left\{tG\left(\frac{u-y}{t}\right) + y - v(0,y)\right\}.$$
Let $z=u-y$.  When $|z/t|\leq \gamma$,
$$z - tG\left(\frac{z}{t}\right) = \frac{t\gamma z}{t\gamma} - \frac{ t\gamma}{4}\left(1 +  \frac{z}{t\gamma} \right)^2 = \frac{-t\gamma}{4}\left(1- \frac{z}{t\gamma}\right)^2 = -tG\left(-\frac{z}{t}\right).$$
Whereas, when $z/t>\gamma$, we have $z-tG(z/t) = -tG(-z/t)=0$, and when $z/t<-\gamma$ we have $z-tG(z/t) = -tG(-z/t) = z$.
Therefore, we have
\begin{align}
\label{Hopf-Lax formula}
v(t,u) &= \sup_y \left\{ v(0,y) + u-y - tG\left(\frac{u-y}{t}\right)\right\} \ = \ \sup_y \left\{v(0,y) - tG\left(\frac{y-u}{t}\right)\right\}.
\end{align}

\section{Poisson process law of tagged particle in TASEP}
 \label{Poisson process section}

  There are at least two proofs of the following result.  A version by `time-reversal', given below for convenience of the reader; see \cite{Kipnis}. 
	Another, using generator calculations, can be found in \cite{Liggett}[Theorem 4.7].

 \begin{prop}
 Consider TASEP.  Starting under $\nu_\rho(\cdot|\eta(0)=1)$, we have that $\{X_t:t\geq 0\}$ has the distribution of a Poisson process with rate $1-\rho$.
 \end{prop}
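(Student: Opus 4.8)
The plan is to exploit the classical dictionary between nearest-neighbor exclusion and a series of queues, and then invoke a Burke-type output theorem, whose proof is a time reversal. First I would pass to the reference frame of the tagged particle: writing $\xi_t(\cdot)=\eta_t(\cdot+X_t)$, the process $\xi_t$ is Markov, $\nu_\rho(\cdot\mid\xi(0)=1)$ is invariant for it, and $X_t$ is the counting process of the right-jumps of the tagged particle, occurring at rate $1-\xi_t(1)=\one(\eta_t(X_t+1)=0)$. Equivalently, enumerate the particles $0,1,2,\dots$ to the right of and including the tagged one (particle $0$), let $g_k$ be the number of empty sites strictly between particle $k-1$ and particle $k$, and note that a right-jump of particle $k$ transfers one hole from the gap $g_{k+1}$ into the gap $g_k$, at rate $\one(g_{k+1}\ge 1)$. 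Thus the holes run a one-sided tandem of M/M/1 queues $\cdots\to g_3\to g_2\to g_1\to$, each with unit service rate, and the jump process of the tagged particle is exactly the departure (output) process of the last queue $g_1$, since $\one(g_1\ge1)=\one(\eta_t(X_t+1)=0)$. Under $\nu_\rho(\cdot\mid\eta(0)=1)$ the gaps $\{g_k\}_{k\ge1}$ are i.i.d.\ with $E_{\nu_\rho}[\,\cdot\,]$ putting mass $(1-\rho)^m\rho$ on $\{g_k=m\}$, which is precisely the product-form stationary law of this tandem, all queues having common traffic intensity $1-\rho$.

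Next I would carry out the Burke/output argument. For a finite tandem of these queues fed by an external Poisson source of rate $\lambda=1-\rho$, one checks on the generator that the time-reversal of the stationary network is again a tandem network of the same type with the orientation of all flows reversed; hence the forward output process of $g_1$ coincides with the time-reversal of the external input of the reversed network, so it is a Poisson process of rate $1-\rho$, independent of the state at the terminal time. Passing to the infinite stationary tandem $(g_1,g_2,\dots)$ attached to $\nu_\rho$---either by an approximation argument as in the infinite tandem-queue analysis of \cite{Kipnis}, or by directly justifying the reversed-generator identity for the infinite-state Markov process---the same conclusion persists: the output of $g_1$, that is $\{X_t:t\ge0\}$, is a Poisson process. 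Its rate is the steady-state throughput of the tandem, equal to the common arrival rate $\lambda=1-\rho$; alternatively one reads it off from the compensator, since $\xi_t$ is stationary with $E[\xi_t(1)]=\rho$ gives $X_t-\int_0^t(1-\xi_s(1))\,ds$ a martingale of mean-zero increments with $E[X_t]=(1-\rho)t$.

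The main obstacle is the infinite-network step: the output theorem for an \emph{infinite} stationary tandem is genuinely more than its finite-volume version---a naive truncation that replaces everything to the right of particle $M$ by vacuum makes the rightmost queue only null-recurrent, so one must either choose the truncation (and the monotone coupling) carefully, feeding $g_M$ at the subcritical rate $1-\rho$, or verify quasi-reversibility directly for the infinite-state generator. Everything else---the queue mapping, the product-form identification of $\nu_\rho(\cdot\mid\eta(0)=1)$, and the rate computation---is routine. For completeness I would also recall the alternative, purely algebraic route of \cite{Liggett}, which bypasses queues and instead checks from the joint generator \eqref{joint} that $\{X_t\}$ has independent $\mathrm{Poisson}(1-\rho)$ increments; the time-reversal proof is included here only because it makes the mechanism transparent.
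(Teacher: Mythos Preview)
Your approach is essentially the same as the paper's: both map the gaps to the right of the tagged particle to a zero-range/tandem-queue system in its product-geometric equilibrium, identify $X_t$ with the output current across the boundary, and then use time-reversal (Burke) to see that this output is Poisson of rate $1-\rho$. The paper's execution is slightly more direct in that it computes the $L^2(\nu_{ZR})$-adjoint of the infinite zero-range generator outright---finding that the reservoir injects at constant rate $1-\rho$ in the reversed dynamics---which resolves your flagged infinite-network obstacle without any finite-tandem approximation.
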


	\begin{proof}
 The argument considers the zero-range dynamics of the spacings $\zeta_t$ between particles in TASEP.  The setting is similar to the zero-range model considered in Section \ref{first proof section}.  
We can assume there are no particles behind the tagged particle as these wouldn't interact with the tagged particle in TASEP.  Label the particles going forward:  The tagged particle is the $0$th particle, the next to the right is particle $1$, etc.  In this setting, let $\zeta(i)$ be the gap between particles $i$ and $i-1$ for $i\geq 1$.  Then, we set $\zeta(0)=\infty$, a `reservoir' behind the tagged particle.  The position at time $t$ of the tagged particle in TASEP is the number $J(t)$ of zero-range particles which jump from location $1$ to the reservoir $0$ in the zero-range process.   When TASEP starts in the invariant state $\prod_{i\leq -1}{\rm Bern}(0)\times{\rm Bern}(1)\times \prod_{i\geq 2}{\rm Bern}(\rho)$, the zero-range process $\{\zeta_\cdot(j):j\geq 1\}$ is also in an invariant state, $\nu_{ZR}=\prod_{i\geq 1} {\rm Geometric}(1-\rho)$.  
 
The process generator is
$\mathcal{L}_{ZR}f(\zeta) = (f(\zeta^{1,0})-f(\zeta))\one(\zeta(1)\geq 1) + \sum_{j\geq 2} (f(\zeta^{j, j-1})-f(\zeta))\one(\zeta(j)\geq 1)$
where $\zeta^{1,0}$ and $\zeta^{0,1}$ are the configurations which takes away and puts a zero-range particle at location $1$.
The $L^2(\nu_{ZR})$ adjoint 
may be found by computing on local functions as
$\mathcal{L}_{ZR}^*f(\eta) = (f(\zeta^{0,1})-f(\zeta))(1-\rho) + \sum_{j\geq 2} (f(\zeta^{j-1, j})-f(\zeta))\one(\zeta(j-1)\geq 1)$.
Since the adjoint corresponds to time-reversal of the process, $J(t)$ has the same distribution as $J^*(t)$, the number of zero-range particles which jump from the reservoir $0$ to $1$ in the reversed process.  One sees from the form of $\mathcal{L}_{ZR}^*$ that $J^*(t)$ is a Poisson process with rate $1-\rho$. \end{proof}

\vskip .2cm

{\bf Acknowledgements.}
This work was supported in part by a Simons Sabbatical grant, and ARO-W911NF-18-1-0311.

\end{document}